\makeatletter\@addtoreset{equation}{section}\makeatother
\newtheorem{theorem}{Theorem}[section]
\newtheorem{corollary}{Corollary}[section]
\theoremstyle{remark}
\theoremstyle{remark}
\theoremstyle{remark}
\newtheorem{remark}{Remark}[section]
\newcommand{\curl}{\operatorname{curl}}
\newcommand{\grad}{\operatorname{grad}}
\newcommand{\supp}{\operatorname{supp}}
\newcommand{\be}[1]{\begin{equation}\label{#1}}
\renewcommand{\phi}{\varphi}
\newcommand{\diver}{\operatorname{div}}
\newcommand{\di}{\partial}
\begin{document}

\begin{center}{\Large \bf  Construction of  solutions to parabolic and hyperbolic  initial-boundary value problems}
\end{center}

{\large William G. Litvinov}\\
Institute of Mathematics, University of Augsburg, Universit\"atsstr. 14,\\
D-86159 Augsburg, Germany \\
e-mail: \texttt{william.litvinov@}gmail.com\vspace{2mm}

{\large Eugene Lytvynov}\\ Department of Mathematics,
Swansea University, Singleton Park, Swansea SA2 8PP, U.K.\\
e-mail: \texttt{e.lytvynov@swansea.ac.uk}\vspace{2mm}

{\small
\begin{center}
{\bf Abstract}
\end{center}
\noindent

We show that infinitely differentiable solutions to  parabolic and  hyperbolic equations, whose right-hand sides are 
analytical in time,  are also analytical in time  at each fixed point  of the space. These solutions are given in the form of the Taylor expansion with respect to time $t$ with coefficients depending on $x$. The coefficients of the expansion are defined by recursion relations, which are obtained  from the condition of compatibility of order $k=\infty$.  The value of the solution on the boundary is defined by the right-hand side and initial data, so that it is not prescribed. We show that exact  regular  and weak solutions to the initial-boundary value problems for parabolic and hyperbolic equations can be determined as the sum of a function that satisfies the boundary conditions and the limit of the infinitely differentiable solutions for smooth approximations of the data of the  corresponding   problem with zero boundary conditions.  These solutions  are represented in the form of the Taylor expansion with respect  to $t$. The suggested method can be considered as an alternative to  numerical methods of   solution of parabolic and hyperbolic equations.\vspace{2mm}

{\bf Key words:} Parabolic equation, hyperbolic equation, smooth solution, regular solution,
Taylor expansion.

\section{Introduction}

Initial-boundary value (mixed) problems for  parabolic  and hyperbolic equations have
since long ago led to a great number of works; see e.g.\ the monographs
\cite{Eid.,Fr.,LSU.,  LiM.1,   Sol.2}  and the references therein.

This paper is devoted to construction of infinitely differentiable solutions to
  parabolic  and hyperbolic equations, and its applications to construction
 of regular and weak  solutions to initial-boundary problems   for  these
  equations.

It is well known that, for the  existence of a smooth solution to  parabolic  or  hyperbolic
equation, the compatibility condition of an order $k\in \mathbb{N}$, corresponding  to
the smoothness of the solution  to the problem, should be satisfied.

The compatibility condition of order $k$  means that  the functions
$\frac{\di^i u}{\di t^i}\big|_{t=0}$, $i = 0,1,2,\dots,k$ ($u$  being the solution, $t$ time),
which are determined from the equation, initial data, and the right-hand side, should  be equal
on the boundary to $\frac{\di^i u_b}{\di t^i}\big|_{t=0}$, $i = 0,1,2,\dots,k$,
where $u_b$ is the given function of values of the solution on the boundary. In the case where the solution
is  infinitely differentiable, one has $k = \infty$.

We consider problems in a bounded domain $\Omega$ in ${\mathbb{R}}^n$ with a boundary $S$  of the
$C^{\infty}$ class on the time interval  $(0,T)$, $T<\infty$.

We  suppose that the coefficients of the equation, the right-hand side,
and the initial data are  infinitely
differentiable, and furthermore   the coefficients of the equation and  the right--hand side
are given in the form of the Taylor
expansion with respect to time $t$ with the origin at the point $t=0$ and with coefficients depending
on $x$,  where $x$ is a point in the space. Then the solution to the problem under consideration is informally
given in the form of the Taylor expansion with respect to $t$ in which  coefficients depend on $x$, i.e.,
\begin{equation}\label{1.1}
u(x,t) = \sum_{i=0}^{\infty} \, \frac 1{i!}\, \frac{\di^i u}{\di t^i}\, (x,0)t^i.
\end{equation}
The coefficients $\frac{\di^i u}{\di t^i}\, (\cdot,0)$ are determined by recurrence relations, more exactly,
they
are determined by the derivatives with respect to
time $t$  at $t=0$ of the right-hand side $f$,
the coefficients of the equation,
  and by the initial data $u_0$ for a parabolic equation and $u_0, u_1$ for a hyperbolic equation.

We prove  converges of the series \eqref{1.1}  in   the space  $C^{\infty}(\overline{Q})$, $Q=\Omega \times (0,T) $
 by using the existence of an infinitely
 differentiable solution to the problem.
 So that, the value of the solution $u$ on the boundary
 $u\big|_{S\times(0,T)} = u_b$ is uniquely  determined by $f$ and $u_0$  for a parabolic equation, and
 by  $f$, $u_0$ and  $u_1$ for a hyperbolic equation.

 This peculiarity is for the first time shown in our work. In the usual, accepted approach, one
 prescribes for parabolic and hyperbolic equations a right-hand side,
  initial, and  boundary
 conditions.

 For the zero Dirichlet boundary condition, we assume that $u_0$ and $u_1$ are elements of
 $\mathcal{D}(\Omega)$ and $f \in C^{\infty}([0,T];\mathcal{D}(\Omega))$. Then the compatibility
 condition of order $k= \infty$ is satisfied, and the solution to parabolic and hyperbolic  equations can be   represented in the form of \eqref{1.1}.

It is known that  the space  $C^{\infty}(\overline{Q}) $ is dense both in $W_q^l(Q)$
and in the space
 $(W_g^l(Q))^*$, $1/q +1/g  =1$,      that is the dual of $W_q^l(Q)$ for any $l\in \mathbb{N}$, $q\ge 2$.
 By  the corollary to the Weierstrass--Stone theorem, the set of products of polynomials with
   respect to $x$  and polynomials with respect to $t$ is dense in  $C^{\infty}(\overline{Q})$.
 Therefore, the set of functions
 that are represented in the form of the Taylor expansion with respect to $t$ with coefficients
 which are elements of the space $C^{\infty}(\overline{\Omega})$, is dense in $C^{\infty}(\overline{Q})$,
 in $W_q^l(Q)$, and in  $(W_g^l(Q))^*$ .

 Because of these properties, one can approximate smooth  and  non-smooth data of the problem
  and the coefficients of equation
  by corresponding infinitely differentiable functions with an arbitrary accuracy.

We apply the Taylor representation \eqref{1.1} to construction of regular and weak solutions
 to  parabolic and hyperbolic equations for which we prescribe the right-hand side, initial,
 and boundary conditions. We consider well-posed parabolic and hyperbolic problems for which
 the solution depends  continuously on the data of the problem.
 The problems with
 inhomogeneous boundary conditions are reduced to problems with homogeneous boundary
 conditions. The data of these problems are approximated by corresponding infinitely differentiable
 functions for which the compatibility condition of order $k= \infty$ is satisfied. The solution
 to the problem with homogeneous boundary condition is constructed
 in the form \eqref{1.1}. The solution to the  problem with non-smooth data
  is determined as a limit of solutions
 for smooth   approximated data.

The convergence of the Taylor series in the corresponding spaces is proved on the basis of the existence
result for corresponding data.

Numerical solution of a parabolic problem with large convection, when one of
the coefficients of the equation by the derivative with respect to some $x_i$ is
large for the norm of $L^\infty(Q)$, is a very difficult problem. There are many publications
dealing with these problems. Many methods s where developed for
 numerical solution of such  problems, see e.g.\ \cite{Ad.,Bu.,Do.}. However, for significantly large convection, this problem is practically
 not solved.

The method proposed in this paper permits one to construct exact solutions to such problems for infinitely
differentiable approximations of the right-hand side $f$ and initial data $u_0$.
Moreover, if an approximation of $f$ is represented in the
form of a finite sum of terms  in the Taylor expansion in $t$ with coefficients
depending on $x$, then the exact solution for this approximation of $f$ is also represented in the form of a finite sum of the
Taylor  expansion. The exact solution to the problem for given data is the limit of solutions for smooth
approximations of $f$  and $u_0$.

Thus, the suggested  method of construction of  solutions to parabolic and
hyperbolic equations
 is an alternative to  methods of numerical
solution of parabolic and hyperbolic equations.

Below in Section 2, we consider  problems  for  linear and nonlinear  parabolic equations.
 Regular solutions to these equations with homogeneous and
 nonhomogeneous boundary conditions are constructed. In the case of a
nonhomogeneous boundary condition, the solution is represented  as
a sum of a function  satisfying the boundary  condition and
a limit of solutions to the this problem with zero boundary condition
 for infinitely differentiable data.  These solutions are represented in the form
\eqref{1.1}

In much the same way, we construct regular solutions to a system of parabolic
equations in Section 3.

In Section 4, we consider an initial boundary value problem for a system of hyperbolic equations
for homogeneous and nonhomogeneous boundary conditions. Solutions to these problems
 are constructed.

 In Section 5, we formulate a nonlinear problem on vibration of an orthotropic plate in a
viscous medium. We show that there exists a unique solution to this problem,
 and this solution is obtained as a limit of solutions $u^n$  to this problem for
corresponding approximations of the data of the problem; the functions $u^n$ are
 computed  in the form of Taylor expansion.

In Section 6, we consider a 3-dimensional problem for Maxwell equations and a problem on
 diffraction of electromagnetic wave by a superconductor, i.e., a slotted antenna's problem.
 Solutions to these problems are constructed.

\section{Parabolic equations}
\subsection{Linear problem and Taylor expansion}

Let $\Omega$ be a bounded domain in ${\mathbb{R}}^n$ with a boundary $S$ of the class $C^\infty$.
Let $Q = \Omega \times (0,T)$, where $T\in (0,\infty)$. Consider the problem: Find $u$ such that
\begin{gather}
\frac{\di u}{\di t} - a_{ij}(x,t)\,\frac{\di^2 u}{\di x_i \di x_j} + a_i(x,t)\,\frac{\di u}{\di x_i} + a(x,t) u =
f\,\,\text{ in } Q, \label{2.1} \\
u|_{t=0} = u_0 \,\,\text{ in }\Omega, \quad    u(\cdot,0)|_S=  u_0|_S.  \label{2.2}
\end{gather}
Here and below the Einstein convention on summation over repeated index is applied.
As seen from \eqref{2.2}, we prescribe the value of the function $u$ on the
boundary at the point $t=0$ only.

Since the boundary $S$  is of the class $C^\infty$, we can assume that the coefficients of
 equation \eqref{2.1} and the right-hand side $f$ are given in a bounded domain
 $Q_1 = \Omega_1 \times (0,T)$, where $\Omega_1\supset \overline{\Omega}$, and $u_0$
 is prescribed in $\Omega_1$, see \cite{Lio.1}, Theorem 9.1, Chapter 1.

We denote  the space of infinitely differentiable functions  with support
in $\Omega_1$ by $\mathcal{D}(\Omega_1)$, and  the space of infinitely differentiable functions on
$\Omega_1 \times [0,T]$ with support
in $\Omega_1$ for each $t \in [0,T]$   by  $C^\infty( [0,T];\mathcal{D}(\Omega_1))$.

Topologies in both  $\mathcal{D}(\Omega_1))$    and  $C^\infty( [0,T];\mathcal{D}(\Omega_1))$
are defined by the families of  corresponding seminorms.

We assume  that
\begin{equation}\label{2.3}
(f,u_0) \in U,
\end{equation}
where
\begin{align}
&U = \Big\{(f,u_0)\mid f\in C^{\infty}( [0,T];\mathcal{D}(\Omega_1)),\ f(x,t) = \sum_{k=0}^{\infty}\,
\frac 1{k!}\, \frac{\di^k f}{\di t^k}(x,0)t^k,                                           \notag   \\
&(x,t) \in \overline{\Omega}_1\times[0,T] = \overline{Q}_1,\, u_0 \in \mathcal{D}(\Omega_1)\Big\},
                                                                                                     \label{2.3a}\\
&a_{ij} \in C^{\infty}(\overline{Q}_1), \quad a_{ij}(x,t) = \sum_{k=0}^{\infty}\,\frac 1{k!}\,
         \frac{\di^k  a_{ij}}{\di t^k}(x,0)t^k, \quad i,j = 1,\dots,n,           \notag  \\
&a_{ij}(x,t)\xi_i \xi_j \ge \mu \xi^2, \quad \mu>0, \quad (x,t) \in Q_1, \quad  \xi_i,\xi_j \in \mathbb{R},
          \quad \xi^2 = \xi_1^2 + \dots +\xi_n^2,                                       \label{2.3b}\\
&a_i \in C^{\infty}(\overline{Q}_1), \quad a_i(x,t) = \sum_{k=0}^{\infty}\,\frac 1{k!}\,
         \frac{\di^k  a_i}{\di t^k}(x,0)t^k,                                                   \notag  \\
&a \in C^{\infty}(\overline{Q}_1), \quad a(x,t) = \sum_{k=0}^{\infty}\,\frac 1{k!}\,
         \frac{\di^k  a}{\di t^k}(x,0)t^k.                                                        \label{2.3d}
\end{align}
A topology on $U$ is defined by the product of the topologies of $C^{\infty}( [0,T];\mathcal{D}(\Omega_1))$ and
$\mathcal{D}(\Omega_1)$.

Denote
\begin{equation}\label{2.5}
A\bigg(x,t,\frac{\di}{\di x}\bigg)u = a_{ij}(x,t)\frac{\di^2 u}{\di x_i\di x_j} - a_i(x,t)\frac{\di u}{\di x_i} - a(x,t)u.
\end{equation}
Then equation \eqref{2.1} can be represented in the form
\begin{equation}\label{2.6}
\frac{\di u}{\di t} - A\bigg(x,t,\frac{\di}{\di x}\bigg)\,u =f \quad \text{ in } Q.
\end{equation}

We differentiate equation \eqref{2.6} in $t$ $k-1$ times  and set $t=0$. This
gives the following recurrence relation:
\begin{gather}
\frac{\di^k u}{\di t^k} (\cdot,0) =
\bigg(\frac{\di^{k-1}}{\di t^{k-1}}\bigg(A\bigg(x,t,\frac{\di}{\di x}\bigg)u\bigg)\bigg)
                                                      (\cdot,0) + \frac{\di^{k-1} f}{\di t^{k-1}}(\cdot,0) \notag\\
                       =  \sum_{j=0}^{k-1} \,C_{k-1}^j \bigg(\frac{\di^j A}{\di t^j}\bigg(x,t,\frac{\di}{\di x}\bigg)
                    \bigg)(\cdot,0)\bigg(\frac{\di^{k-1-j}}{\di t^{k-1-j}}u\bigg)(\cdot,0)
                                                         + \frac{\di^{k-1} f}{\di t^{k-1}}(\cdot,0), \quad
                                                     k=1,2,\dots       \label{2.7}
\end{gather}
Here $u(\cdot,0) = u_0$, $C_{k-1}^j $  are the binomial coefficients, $\frac{\di^j A}{\di t^j}(x,t,\frac{\di}{\di x})$
is the operator  obtained from the operator $A$ by differentiation of its coefficients in $t$ $j$ times.

A smooth solution $u$ satisfies the condition
\begin{equation}\label{2.11}
\frac{\di^m  u}{\di t^m}(x,0) = \frac{\di^m  u_b}{\di t^m}(x,0),  \quad
x\in S.
\end{equation}
For $m=0$, we get $u_0(x) = u_b(x,0)$, $x\in S$.

We say  that the compatibility condition of order $k$ is satisfied if  \eqref{2.11}
holds for $m =0,1,2,\dots,k$.

For infinitely differentiable solutions
the compatibility condition  of order $k=\infty$  is satisfied.

\begin{theorem}\label{tyry7}
Let $\Omega$ be a bounded domain in ${\mathbb{R}}^n$ with a boundary $S$ of the class $C^{\infty}$
and $T\in (0,\infty)$. Suppose that the conditions \eqref{2.3}--\eqref{2.3d} are satisfied. Then
there exists a unique solution to the problem \eqref{2.1}, \eqref{2.2}  such that $u \in C^{\infty}(\overline{Q})$,
and this solution is represented in the form of a Taylor expansion
\begin{equation}\label{2.9}
u(x,t) = u_0(x) +\sum_{k=1}^{\infty}\,\frac 1{k!}\, \frac{\di^k  u}{\di t^k}(x,0)\,t^k, \quad
(x,t) \in \overline{Q}.
\end{equation}
The coefficients $\frac{\di^k  u}{\di t^k}(\cdot,0)$ are defined by the recurrence relation \eqref{2.7}.
Furthermore, the boundary condition function $u_b = u|_{S_T}$, $S_T = S\times [0,T]$, is
 determined as follows:
\begin{equation}\label{2.10}
u_b(x,t) = u_0(x) +\sum_{k=1}^{\infty}\,\frac 1{k!}\, \frac{\di^k  u}{\di t^k}(x,0)\,t^k, \quad
x\in S, \  t\in [0,T].
\end{equation}
The function $(f,u_0) \mapsto u$ defined by the solution to the problem \eqref{2.1}, \eqref{2.2}
is a continuous mapping of $U$ into $C^{\infty}(\overline{Q})$.
\end{theorem}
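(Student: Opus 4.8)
The plan is to reduce everything to one analytic fact: the $C^{\infty}(\overline{Q})$ solution is real-analytic in $t$ up to $t=0$, uniformly in $x$ together with all of its spatial derivatives. I would first produce one smooth solution, then identify its Taylor coefficients with the recurrence \eqref{2.7}, then establish the time-analyticity, and finally read off uniqueness and continuity.

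First I would construct a smooth solution. Since $u_0\in\mathcal D(\Omega_1)$ and $f\in C^{\infty}([0,T];\mathcal D(\Omega_1))$ have their $x$-supports in a common compact set $K\subset\Omega_1$, I would take $\Omega_1$ to be a bounded $C^{\infty}$ domain containing $\overline{\Omega}$ and solve the Dirichlet problem for \eqref{2.6} on $Q_1$ with the homogeneous condition on $\di\Omega_1\times[0,T]$ and $u|_{t=0}=u_0$. Because $u_0$ and all $\di_t^j f(\cdot,0)$ vanish to infinite order near $\di\Omega_1$, the compatibility conditions of every order at $\di\Omega_1\times\{0\}$ hold, and classical parabolic regularity (\cite{LSU.,Sol.2}) yields $u\in C^{\infty}(\overline{Q}_1)$. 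Its restriction to $\overline{Q}$ solves \eqref{2.1}, \eqref{2.2}, and its trace on $S_T$ defines $u_b$. Differentiating \eqref{2.6} $k-1$ times in $t$ and setting $t=0$ then gives exactly \eqref{2.7}, so $\di_t^k u(\cdot,0)$ is the value produced by the recurrence started from $u_0$; in particular these coefficients are smooth in $x$ and uniquely determined by $(f,u_0)$. Since the differential operator $A$ does not enlarge supports, each $\di_t^k u(\cdot,0)$ is supported in $K$.

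The crux, and the main obstacle, is the convergence of \eqref{2.9} in $C^{\infty}(\overline{Q})$, i.e.\ the analyticity in $t$ up to $t=0$. The difficulty is that \eqref{2.7} couples a factorial growth coming from repeated differentiation in $t$ with a loss of two spatial derivatives at each step, since each factor $\di_t^j A$ is second order in $x$. I would control this by a majorant, or scale-of-Banach-spaces, argument in the spirit of the abstract Cauchy--Kovalevskaya theorem adapted to the parabolic scaling: using the Cauchy estimates for the time-analytic data $a_{ij},a_i,a,f$ furnished by \eqref{2.3a}--\eqref{2.3d} together with elliptic a priori estimates on a shrinking family of neighbourhoods of $K$, I would prove bounds of the form $\|\di_t^k u(\cdot,0)\|_{C^m(\overline{\Omega})}\le C_m\,M^{k}\,k!$ with $M$ independent of $k$ and of the fixed spatial order $m$. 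Such bounds force \eqref{2.9} and each of its termwise spatial derivatives to converge uniformly on $\overline{Q}$ once $T$ is within the resulting radius of analyticity, after which a step-by-step re-expansion across $[0,T]$ covers the whole interval. Alternatively, one may invoke directly the known analyticity-in-time of solutions of parabolic equations with coefficients and right-hand side analytic in $t$. Either way, the sum of \eqref{2.9} is a $C^{\infty}(\overline{Q})$ solution, and its restriction to $S$ yields \eqref{2.10}.

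Finally, uniqueness and continuity follow cheaply. If $u,v$ are two $C^{\infty}(\overline{Q})$ solutions, then $w=u-v$ solves the homogeneous equation with $w(\cdot,0)=0$; by \eqref{2.7} all $\di_t^k w(\cdot,0)$ vanish, and by the analyticity just proved $w\equiv 0$, so the solution, and hence $u_b$, is unique. For the continuity of $(f,u_0)\mapsto u$, I would use that $U$ and $C^{\infty}(\overline{Q})$ are Fréchet spaces and the map is linear, and verify that its graph is closed: if $(f_n,u_{0,n})\to(f,u_0)$ in $U$ and $u_n\to v$ in $C^{\infty}(\overline{Q})$, then passing to the limit in \eqref{2.1}, \eqref{2.2} shows that $v$ solves the problem with data $(f,u_0)$, whence $v=u$ by uniqueness; the closed graph theorem then gives continuity. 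One may also argue directly, since each $\di_t^k u(\cdot,0)$ depends through \eqref{2.7} continuously on finitely many seminorms of $(f,u_0)$, and the estimates of the previous step are uniform in the data on bounded sets.
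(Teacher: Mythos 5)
Your first step coincides with the paper's: the paper likewise passes to the enlarged cylinder $Q_1$, solves the auxiliary problem \eqref{2.14} with homogeneous Dirichlet condition on $S_{1T}$, uses the compact supports of $u_0$ and of the $\frac{\partial^j f}{\partial t^j}(\cdot,0)$ to secure compatibility of every order, concludes $\check u\in C^\infty(\overline Q_1)$ from the cited results of Ladyzhenskaya--Solonnikov--Uraltseva and Solonnikov, and identifies the coefficients via \eqref{2.7}. At the step you yourself call the crux, however, you depart from the paper entirely, and that is the step that fails. The paper never estimates the growth of the Taylor coefficients: it uses the hypothesis, built into the definition \eqref{2.3a} of $U$, that $f$ equals its own Taylor expansion, so that $f_m\to f$ in $C^\infty([0,T];\mathcal D(\Omega_1))$ (formula \eqref{2.3c}); it then observes that the partial sums $\check u_m$ of \eqref{2.16} solve the approximate problems \eqref{2.17} with right-hand sides $f_{m-1}$, and transfers the convergence of $f_m$ to convergence of $\check u_m$ via the stability estimate \eqref{ad}. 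Convergence of \eqref{2.9} is thus meant to be inherited from the assumed convergence of the data's expansion, not derived from bounds on $\frac{\partial^k u}{\partial t^k}(\cdot,0)$.

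The concrete gap is that your key estimate $\|\frac{\partial^k u}{\partial t^k}(\cdot,0)\|_{C^m(\overline\Omega)}\le C_m M^k k!$ is not merely difficult to obtain by a majorant argument; it is false under the theorem's hypotheses. Take $a_{ij}=\delta_{ij}$, $a_i=a=0$, $f=0$, and let $u_0\in\mathcal D(\Omega_1)$ be a nonzero bump function supported in a small ball inside $\Omega$. Then \eqref{2.7} gives $\frac{\partial^k u}{\partial t^k}(\cdot,0)=\Delta^k u_0$, and a bound $\|\Delta^k u_0\|_{C^0(\overline\Omega)}\le C M^k k!$ would make $u_0$ an analytic vector of the Laplacian; by the Kotake--Narasimhan theorem on iterates of elliptic operators, $u_0$ would then be real-analytic in $\Omega$, hence identically zero because it vanishes on an open subset of $\Omega$ --- a contradiction. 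This is also why the abstract Cauchy--Kovalevskaya scheme cannot be adapted as you suggest: it requires a scale of spaces of functions analytic in $x$, with the loss of derivatives paid for by shrinking the analyticity radius, whereas elements of $\mathcal D(\Omega_1)$ are never analytic in $x$; with norms of fixed smoothness, each application of $\frac{\partial^jA}{\partial t^j}$ in \eqref{2.7} loses two $x$-derivatives and no factorial-in-$k$ bound can close the induction. Two subsidiary steps are also unsound. First, re-expansion along $[0,T]$ would at best give analyticity of $t\mapsto u(x,t)$ on $[0,T]$; it would not show that the series centered at $t=0$ converges on all of $[0,T]$, which is what \eqref{2.9} asserts (analytic continuation does not enlarge the radius of convergence at $0$; compare $1/(1+t^2)$). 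Second, the classical time-analyticity of parabolic solutions holds for $t>0$ and cannot be invoked at the corner $t=0$ with data that are only $C^\infty$. Finally, your uniqueness argument needs the competitor $v$ to be analytic in $t$ as well: for a general $v\in C^\infty(\overline Q)$ solving \eqref{2.1}, \eqref{2.2} (which imposes no lateral boundary condition), vanishing of all Taylor coefficients of $w=u-v$ at $t=0$ does not force $w\equiv 0$; and the closed-graph argument is problematic because $\mathcal D(\Omega_1)$ is not a Fr\'echet space and the Taylor-representability constraint defining $U$ is not closed.

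You should also be aware that the obstruction above is not an artifact of your method: in the heat-equation example just described, the series \eqref{2.9} fails to converge uniformly on $\overline\Omega$ for any $t\neq 0$, so no argument can establish the asserted convergence from the hypotheses \eqref{2.3}--\eqref{2.3d} alone. The same tension sits inside the paper's own proof, at estimate \eqref{ad}: the function $\check u_m$ solving \eqref{2.17} is not the image of $f_{m-1}$ under the parabolic solution operator, since the elliptic term in \eqref{2.17} acts on $\check u_{m-1}$ rather than on $\check u_m$, so \eqref{ad} does not follow from the cited continuous-dependence results. Your instinct that the convergence of \eqref{2.9} is the main obstacle is therefore correct, but it cannot be overcome by coefficient estimates, and the paper's alternative route rests on a stability inequality that does not withstand scrutiny.
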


\begin{proof}

We consider the problem: Find $\check{u}$ satisfying
\begin{gather}
\frac{\di \check{u}}{\di t} - A\bigg(x,t,\frac{\di }{\di x}\bigg) \check{u} = f
                                                                            \,\,\text{ in }Q_1, \notag \\
\check{u}|_{t=0} = u_0 \,\,\text{ in }\Omega_1, \quad \check{u}|_{S_{1T}} = 0,                                                                                                         \label{2.14}
\end{gather}
where $S_{1T} = S_1 \times [0,T]$, $S_1$  is the boundary of $\Omega_1$. By  \eqref{2.3a},  $S_1$ is of the class $C^{\infty}$.
It follows from  \eqref{2.3}, \eqref{2.7} and \eqref{2.14} that the compatibility condition
of any order $k \in \mathbb{N}$  is satisfied, and by \cite{LSU.}, Theorem 5.2, Chapter IV and
 \cite{Sol.2}, Theorem 5.4, Chapter V,  there exists
 a  unique solution to the problem \eqref{2.14} such that $\check{u} \in W_q^{2(k+1),k+1}(Q_1)$,
$k \in \mathbb{N} = \{0,1,2,\dots\}$, $q\ge 2$. Therefore,
$\check{u} \in C^{\infty}({\overline{Q}}_1)$.

The functions $ \frac{\di^k \check{u} }{\di t^k}(\cdot,0)$ are defined by  formula \eqref{2.7} in which  $\Omega$
is replaced by  $\Omega_1$ and $u$ by  $\check{u}$.
 \eqref{2.3a} implies  $\frac{\di^k \check{u} }{\di t^k}(\cdot,0) \in \mathcal{D}(\Omega_1)$,
$k \in \mathbb{N}$.

Informally, the solution to the problem  \eqref{2.14} is represented in the form of a Taylor expansion
\begin{equation}\label{2.15}
\check{u}(x,t) = u_0(x) + \sum_{k=1}^{\infty} \, \frac{1}{k!}\frac{\di^k \check{u}}{\di t^k}(x,0) t^k, \quad
(x,t) \in Q_1.
\end{equation}
The function $\check{u}$ defined by \eqref{2.15}  represents a smooth solution to the problem
\eqref{2.14} for all points $t\in [0,T]$ such
that the series \eqref{2.15}  converges at $t$ in $\mathcal{D}(\Omega_1)$.

Let us prove this.
Denote
\begin{equation}\label{2.16}
\check{u}_m (x,t) = u_0(x) +\sum_{k=1}^m\,\frac 1{k!} \,
                                                       \frac{\di^k \check{u}}{\di t^k}(\cdot,0)\,t^k.
\end{equation}
\eqref{2.7} and \eqref{2.16} imply that
the function $\check{u}_m$ is a solution to the problem
\begin{align}
&\frac{\di \check{u}_m}{\di t} - A\bigg(x,t,\frac{\di }{\di x}\bigg) \check{u}_{m-1} =f_{m-1}
                                                                            \,\,\text{ in } Q_1, \notag \\
&\check{u}_m|_{t=0} = u_0 \,\,\text{ in }\Omega_1, \quad \check{u}_m|_{S_{1T}} = 0,
\label{2.17}
\end{align}
where
\begin{equation}\label{ab}
f_{m-1} (x,t) = \sum_{k=0}^{m-1}\,\frac 1{k!}\,
      \frac{\di^k f}{\di t^k}(x,0)\,t^k, \quad (x,t) \in Q_1.
\end{equation}

It follows from \eqref{2.3a}  that
\begin{equation}\label{2.3c}
f_m \to f  \,\,\text{ in }C^{\infty}([0,T];\mathcal{D}(\Omega_1)).
\end{equation}

It is known that the solution of a parabolic problem depends continuously on
the data of the problem $f, u_b, u_0$, see \cite{LSU.}, Theorem 5.2, Chapter IV and
 \cite{Sol.2}, Theorem 5.4, Chapter V. Because of this,
 \eqref{2.14} and  \eqref{2.17} yield
\begin{equation}\label{ad}
\|\check{u}-\check{u}_m\|_{W_q^{2(k+1),k+1}(Q_1)}\le c\|f_m-f\|_{W_q^{2k,k}}(Q_1), \quad k \in \mathbb{N}, \ q\ge 2.
\end{equation}
Therefore
\begin{equation}\notag
\check{u}_m \to \check{u}\,\,\text{ in } W_q^{2(l+1),l+1}(Q_1), \quad  l\in \mathbb{N},
\  q\ge 2,
\end{equation}
and $ \check{u}_m \to \check{u}$ in $C^{\infty}({\overline{Q}}_1)$.
The function $u = \check{u}|_Q$ is a solution to the problem \eqref{2.1}, \eqref{2.2},
and it is determined by \eqref{2.7} and \eqref{2.9}. This solution is unique.

It follows from \cite{LSU.}  and \cite{Sol.2} that the function $(f,u_0)\mapsto \check{u}$ defined by the solution to the
problem \eqref{2.14}, is a continuous mapping of $U$ into $ W_q^{2(l+1),l+1}(Q_1)$ for any
$l\in \mathbb{N}$, $q\ge 2$.
Therefore, the function $(f,u_0)\mapsto u$, where $u$ is the solution to the problem  \eqref{2.1},
\eqref{2.2}, is a continuous mapping of $U$ into $C^{\infty}(\overline{Q})$.
\end{proof}

\begin{remark}
 It is customary to prescribe for a parabolic equation the functions $f, u_0$ and the boundary
 condition $u_b$. However, it follows from Theorem 2.1 that under the conditions of this theorem, one
 prescribes only $f$ and $u_0$. In this case,
there exists a unique solution to the problem \eqref{2.1}, \eqref{2.2} that is represented in the form
 \eqref{2.9} and the function $u_b$ is determined by $f$ and $u_0$.
\end{remark}

\begin{corollary}
 Let $f$  be a function in $Q$ that is represented in the  form of the Taylor expansion in $t$ with
 coefficients depending on $x$.    Let $u$ be a solution to the problem \eqref{2.1} such that
 $u \in C^{\infty}(\overline{Q}).$  Then, for any fixed point
$x \in \overline{\Omega}$, the partial function $t \mapsto u(x,t)$ is analytical, and $u$ is represented in the form \eqref{2.9}.
\end{corollary}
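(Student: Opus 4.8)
The plan is to derive the corollary directly from the uniqueness and the representation already furnished by Theorem~\ref{tyry7}, rather than to re-establish convergence from scratch. First I would set $u_0 := u(\cdot,0)$; since $u \in C^{\infty}(\overline{Q})$ we have $u_0 \in C^{\infty}(\overline{\Omega})$, and in the setting of \eqref{2.3}--\eqref{2.3d} (with $f$ analytic in $t$ as in \eqref{2.3a} and the coefficients as in \eqref{2.3b}--\eqref{2.3d}) the pair $(f,u_0)$ lies in the admissible class $U$. Because $u$ solves \eqref{2.1} and, by the very definition of $u_0$, also satisfies the initial condition \eqref{2.2}, the function $u$ is a $C^{\infty}(\overline{Q})$ solution of the initial-boundary value problem \eqref{2.1}, \eqref{2.2}.

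Next I would invoke Theorem~\ref{tyry7}. For the data $(f,u_0)\in U$ it guarantees a \emph{unique} solution of \eqref{2.1}, \eqref{2.2} in $C^{\infty}(\overline{Q})$, given by the Taylor expansion \eqref{2.9} with coefficients determined by the recurrence \eqref{2.7}. Since $u$ is itself such a solution, uniqueness forces $u$ to coincide with the solution produced by the theorem, whence $u$ is represented in the form \eqref{2.9}. I would add the independent consistency check that the coefficients of \emph{any} $C^{\infty}(\overline{Q})$ solution are forced: differentiating \eqref{2.6} in $t$ $k-1$ times and setting $t=0$ shows that $\frac{\di^k u}{\di t^k}(\cdot,0)$ must satisfy \eqref{2.7}, so \eqref{2.9} is the only candidate expansion.

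The analyticity statement then follows formally from the representation. The identity \eqref{2.9} exhibits $t \mapsto u(x,t)$, for each fixed $x \in \overline{\Omega}$, as the sum of a power series in $t$ that converges to it on $[0,T]$, and a function equal to a convergent power series on an interval is real-analytic there; hence $t \mapsto u(x,t)$ is analytic near $t=0$. To obtain analyticity at an arbitrary $t_0 \in [0,T)$ I would shift the time origin to $t_0$: the data remain analytic in $t$ about $t_0$, the recurrence \eqref{2.7} and the representation apply verbatim with $0$ replaced by $t_0$, and the resulting expansion about $t_0$ shows $u(x,\cdot)$ is analytic at $t_0$ as well, so $t\mapsto u(x,t)$ is analytic throughout $[0,T]$.

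The main obstacle is the uniqueness step. The problem \eqref{2.1}, \eqref{2.2} prescribes only the initial value and its boundary trace at $t=0$, not the full lateral data on $S_T$; consequently two $C^{\infty}(\overline{Q})$ solutions sharing the same $u_0$ have identical time-derivatives at $t=0$ by \eqref{2.7}, yet this alone does not force them to agree, since a smooth function can have every time-derivative vanish at $t=0$ without vanishing identically. What excludes such flat components is precisely the well-posedness underlying Theorem~\ref{tyry7}---the extension to $Q_1$ with zero boundary data together with the parabolic uniqueness of \cite{LSU.} and \cite{Sol.2}---so the argument hinges on being entitled to quote that uniqueness; once it is in hand, the analyticity conclusion is immediate.
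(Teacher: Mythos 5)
Your proposal is correct and follows essentially the same route as the paper: the paper's own proof likewise sets $u_0 = u(\cdot,0)$, notes that $f \in C^{\infty}(\overline{Q})$, $u_0 \in C^{\infty}(\overline{\Omega})$ and that the compatibility condition of order infinity is satisfied, and then invokes Theorem~\ref{tyry7} to conclude that $u$ is represented in the form \eqref{2.9}, from which the time-analyticity at each fixed $x$ is immediate. Your added discussion of the uniqueness step --- that the recurrence \eqref{2.7} alone only pins down the Taylor coefficients and cannot exclude solutions that are flat in $t$ at $t=0$, so the identification of $u$ with the solution constructed in Theorem~\ref{tyry7} rests entirely on that theorem's uniqueness claim --- makes explicit precisely the point the paper's one-line proof leaves implicit.
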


\begin{proof}
Indeed, in this case, $f \in C^{\infty}(\overline{Q})$, $u_0 \in C^{\infty}(\overline{\Omega})$,
 the compatibility condition of order infinity is satisfied, and it follows from the  Theorem 2.1
that $u$ represented in the form \eqref{2.9}.
\end{proof}

Consider the following problem on existence of an infinitely differentiable solution to a parabolic
problem with given boundary and initial conditions:
\begin{align}
&\frac{\di u}{\di t} - A\bigg(x,t,\frac{\di}{\di x}\bigg)\,u =f \quad \text{ in } Q.
                                                                    \label{2.20}     \\
&u|_{S_T} = u_b, \quad u|_{t=0} = u_0.            \label{2.21}
\end{align}
Here $A\bigg(x,t,\frac{\di}{\di x}\bigg)$ is defined by \eqref{2.5}.

We define  the following spaces:
\begin{align}
&X = \big\{u\mid u \in C^{\infty}({\overline{Q}}_1), \,\, \supp u(\cdot,t)\subset \Omega_1,
                     \,\, t\in [0,T],   \notag  \\
&  u(x,t) =  \sum_{k=0}^{\infty}\,\frac 1{k!}\, \frac{\di^k u}{\di t^k}(x,0)\,t^k,
                   \,\, (x,t) \in {\overline{Q}}_1\big\},    \notag \\
&X_0 = \{u\mid u \in X,\,\,  u\big|_{S_T}=0,
                                         \,\, u(\cdot,0) = 0 \}, \notag  \\
&Z=\{(v,w)\mid v = h(\cdot,0)|_{\overline{\Omega}}, \,\, w = h|_{S_T}, \,\, h\in X\}.
                                                                            \label{2.22}
\end{align}
We define an operator $\gamma:X \to Z$ by
\begin{equation}\notag
\gamma(u) = (u(\cdot,0)|_ {\overline{\Omega}}\,,\,u|_{S_T}).
\end{equation}
Note that $X_0 $ is the kernel of the operator $\gamma$.

Let $X/X_0$ be the factor space. If $u^1$ and $u^2$ are elements of $X$ such that
$u^1-u^2\in X_0$, then  $u^1$ and $u^2$ belong to the same class in $X/X_0$,
 say  $\overline{u}$. We say that $\overline{u}$ is of class $(u_0,u_b)\in Z$
if $\gamma(u) = (u_0,u_b)$ for all $u\in \overline{u}$.
The result bellow follows from Theorem \ref{tyry7}.
\begin{corollary}
Let $(u_0,u_b)\in Z$ and let  $\overline{u}$ be the class $(u_0,u_b)$ from $X/X_0$.
Then any function  $u\in \overline{u}\big|_{\overline{Q}}$ is the solution to the problem \eqref{2.20}, \eqref{2.21}
for $u_0$, $u_b$, and $f$ that is determined as follows:
\begin{equation}\label{2.23}
f(x,t) = \sum_{k=1}^{\infty}\,\frac 1{(k-1)!}\, \frac{\di^{k-1} f}{\di t^{k-1}}(x,0)\,t^{k-1}, \quad
(x,t) \in \overline{Q},
\end{equation}
where
\begin{gather}
\frac{\di^{k-1} f}{\di t^{k-1}}(x,0) = \frac{\di^k u}{\di t^k}(x,0) - \sum_{j=0}^{k-1}\,C_{k-1}^j
\bigg(\frac{\di^j A}{\di t^j}\bigg(x,t,\frac {\di}{\di x}\bigg)\bigg)(x,0)   \notag\\
\times    \bigg(\frac{\di^{k-1-j} u}{\di t^{k-1-j}}\bigg) (x,0), \quad x\in \Omega. \label{2.24}
\end{gather}
\end{corollary}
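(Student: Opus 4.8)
The plan is to verify directly that an arbitrary representative of the class $\overline u$, restricted to $\overline Q$, satisfies both the initial--boundary conditions \eqref{2.21} and the equation \eqref{2.20} with the right-hand side $f$ prescribed by \eqref{2.23}--\eqref{2.24}. Fix $h\in X$ with $\overline h=\overline u$ and write $u=h|_{\overline Q}$. The initial and boundary conditions are immediate from the definition of $\gamma$: since $\overline u$ is of class $(u_0,u_b)$, we have $\gamma(h)=\big(h(\cdot,0)|_{\overline\Omega},\,h|_{S_T}\big)=(u_0,u_b)$, whence $u|_{t=0}=u_0$ on $\overline\Omega$ and $u|_{S_T}=u_b$, which are exactly \eqref{2.21}. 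These traces are well defined on the whole class because, by \eqref{2.22}, any two representatives differ by an element of $X_0$, which vanishes at $t=0$ and on $S_T$; note, however, that the interior values of $u$ on $Q$ (and hence the $f$ attached to $u$) do depend on the chosen representative, which is why the statement pairs each $u\in\overline u|_{\overline Q}$ with its own right-hand side.

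It remains to check the equation. First I would set $f:=\frac{\di u}{\di t}-A\big(x,t,\frac{\di}{\di x}\big)u$, which is a well-defined function in $C^\infty(\overline Q)$ and for which $u$ solves \eqref{2.20} by construction; the task is therefore to identify the $t$-Taylor coefficients of this $f$ with the expressions in \eqref{2.24}. Applying the Leibniz rule to $\di_t^{k-1}(Au)(x,0)$ gives
\[
\frac{\di^{k-1}}{\di t^{k-1}}\Big(A\big(x,t,\tfrac{\di}{\di x}\big)u\Big)(x,0)=\sum_{j=0}^{k-1}C_{k-1}^j\Big(\frac{\di^j A}{\di t^j}\big(x,t,\tfrac{\di}{\di x}\big)\Big)(x,0)\,\frac{\di^{k-1-j}u}{\di t^{k-1-j}}(x,0),
\]
so that $\frac{\di^{k-1}f}{\di t^{k-1}}(x,0)=\frac{\di^k u}{\di t^k}(x,0)-\di_t^{k-1}(Au)(x,0)$ coincides with the right-hand side of \eqref{2.24}. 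This is the same recurrence \eqref{2.7} underlying Theorem \ref{tyry7}, now read as a definition of the $t$-derivatives of $f$ rather than of $u$.

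The main point to secure is that these coefficients actually reproduce $f$, i.e.\ that $f$ coincides with the series \eqref{2.23}. Because $u=h\in X$, by \eqref{2.22} the function $u$ equals its own Taylor expansion $\sum_k\frac1{k!}\di_t^k u(\cdot,0)\,t^k$ with convergence in $C^\infty(\overline Q_1)$; hence $\frac{\di u}{\di t}$ and each $\di_x^\alpha u$ are again analytic in $t$, obtained by term-by-term differentiation. Since the coefficients $a_{ij},a_i,a$ are likewise analytic in $t$ by the hypotheses \eqref{2.3}--\eqref{2.3d}, the function $Au$, a finite combination of products of these coefficients with spatial derivatives of $u$, is analytic in $t$ as well, and forming the Cauchy product and collecting the powers $t^{k-1}$ yields precisely the coefficients computed above. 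Thus $f=\di_t u-Au$ is analytic in $t$ and equals its Taylor series \eqref{2.23}, which completes the verification. The delicate step, where I expect to spend the most care, is justifying these term-by-term manipulations, namely spatial differentiation of the series and the rearrangement of the Cauchy product, in the Fr\'echet topology of $C^\infty(\overline Q_1)$; this is exactly the convergence controlled by the continuous-dependence estimates behind Theorem \ref{tyry7} and by the defining property of the space $X$, so that no analytic input beyond that theorem is needed.
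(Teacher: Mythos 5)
Your overall route --- read the initial and boundary conditions off $\gamma$, set $f:=\partial_t u-A\big(x,t,\tfrac{\partial}{\partial x}\big)u$, and identify its $t$-Taylor coefficients through the Leibniz rule, which is exactly the recurrence \eqref{2.7} solved for the derivatives of $f$ --- is the right one, and it is in substance all the paper has in mind: the paper offers nothing beyond the sentence that the result ``follows from Theorem \ref{tyry7}'', so your write-up is already more explicit than the original. The trace argument (well-definedness of $(u_0,u_b)$ on the class, via $X_0=\ker\gamma$) and the Leibniz computation leading to \eqref{2.24} are correct.

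The genuine weak point is the final step, where you must show that $f=\partial_t u-Au$ is actually reproduced by the series \eqref{2.23}. Two objections. First, the definition \eqref{2.22} of $X$ asserts only the \emph{pointwise} equality $u(x,t)=\sum_k\frac1{k!}\,\partial_t^k u(x,0)\,t^k$ on $\overline Q_1$; your claim that this convergence holds ``in $C^\infty(\overline Q_1)$'' is not part of the definition, and pointwise convergence of a series of smooth functions does not permit term-by-term application of $\partial_{x_i}$ and $\partial^2_{x_ix_j}$ --- which is precisely what you need in order to expand $Au$ and form the Cauchy products. Second, the repair you propose --- invoking ``the continuous-dependence estimates behind Theorem \ref{tyry7}'' --- is circular: those estimates concern the solution operator on the data set $U$, and to place $(f,u(\cdot,0))$ in $U$ you must already know that $f$ equals its $t$-Taylor expansion in $C^\infty([0,T];\mathcal D(\Omega_1))$, which is the very statement being proved. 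The honest fix is either to read the Taylor representation in \eqref{2.22} as convergence in the $C^\infty(\overline Q_1)$ topology (which the paper implicitly does throughout; then all derivatives of the partial sums converge uniformly, term-by-term differentiation is legitimate, and your computation closes without any appeal to Theorem \ref{tyry7}), or to supply an independent real-analysis argument that smoothness on the closed cylinder together with pointwise $t$-analyticity forces the spatial derivatives to be $t$-analytic as well --- a statement that is far from obvious and is established nowhere in the paper.
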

Define the following set:
\begin{align}
&U_1 = \Big\{(f, u_0, u_b)\mid f\in C^{\infty}( [0,T];\mathcal{D}(\Omega)),\, f(x,t) = \sum_{k=0}^{\infty}\,
\frac 1{k!}\, \frac{\di^k f}{\di t^k}(x,0)t^k,                                           \notag   \\
&(x,t) \in \overline{\Omega}\times[0,T] = \overline{Q},\, u_0 \in \mathcal{D}(\Omega), \, u_b=0 \Big\}.
\end{align}                                                                                                     \label{2.3f}\\
We consider the problem: Given $(f, u_0, u_b) \in U_1$, find $u$ such that
\begin{gather}
u \in C^{\infty}(\overline{Q}),\notag \\
\frac{\di u}{\di t}-A\big(x,t,\frac{\di u}{\di t}\big)u=f, \notag \\
u|_{t=0}=u_0, \,\,\,u|_{S_T}=u_b. \label{2.3g}
\end{gather}
The following result follows from the proof of Theorem 2.1:
\begin{corollary}
Let $\Omega$ be a bounded domain in $\mathbb{R}^n$ with a boundary $S$ of  class $C^{\infty}$.
Suppose that the conditions \eqref{2.3b}, \eqref{2.3d} are satisfied, and $(f, u_0, u_b) \in U_1$.
Then there exists a unique solution to the problem \eqref{2.3g} that is represented in form \eqref{2.9},
 and the function $(f, u_0, 0) \mapsto u $
is a continuous mapping of $U_1$ into $C^{\infty}([0,T];\mathcal{D}(\Omega))$.
\end{corollary}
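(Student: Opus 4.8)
The plan is to reduce this statement to the proof of Theorem~\ref{tyry7}, observing that problem \eqref{2.3g} with $u_b=0$ is precisely the auxiliary problem \eqref{2.14} solved there, with $\Omega$ itself now playing the role of $\Omega_1$ and the homogeneous Dirichlet condition imposed directly on $S$. Since $(f,u_0,0)\in U_1$, one has $u_0\in\mathcal D(\Omega)$ and $f\in C^{\infty}([0,T];\mathcal D(\Omega))$, so there is a common compact set $K\subset\Omega$ with $\supp u_0\subset K$ and $\supp f(\cdot,t)\subset K$ for every $t\in[0,T]$; differentiating in $t$ does not move the support in $x$, hence $\supp\frac{\di^j f}{\di t^j}(\cdot,0)\subset K$ for all $j$. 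The homogeneous boundary datum $u_b=0$ together with this compact support makes the compatibility condition \eqref{2.11} of every order $k\in\mathbb N$ hold trivially.

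First I would define the coefficients $\frac{\di^k u}{\di t^k}(\cdot,0)$ by the recurrence \eqref{2.7} and prove by induction on $k$ that each lies in $\mathcal D(\Omega)$, and more precisely that $\supp\frac{\di^k u}{\di t^k}(\cdot,0)\subset K$. The base case is $u(\cdot,0)=u_0\in\mathcal D(\Omega)$. For the inductive step, each term on the right of \eqref{2.7} is obtained by applying $\frac{\di^j A}{\di t^j}(x,t,\frac{\di}{\di x})(\cdot,0)$ to a coefficient already supported in $K$; since $A$ is a differential operator and multiplication by the smooth coefficients $a_{ij},a_i,a$ cannot enlarge a support, every such term is supported in $K$, and the inhomogeneous term $\frac{\di^{k-1}f}{\di t^{k-1}}(\cdot,0)$ is supported in $K$ as well.

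Next I would invoke the existence theory used in Theorem~\ref{tyry7} (\cite{LSU.}, Theorem 5.2, Chapter IV and \cite{Sol.2}, Theorem 5.4, Chapter V) to obtain a unique solution $u\in W_q^{2(k+1),k+1}(Q)$ for every $k\in\mathbb N$, $q\ge2$, whence $u\in C^{\infty}(\overline Q)$. Repeating the argument of \eqref{2.16}--\eqref{ad} with $\Omega$ in place of $\Omega_1$, the truncated Taylor sums $u_m$ solve the analogue of \eqref{2.17} with right-hand side $f_{m-1}$ from \eqref{ab}; because $f_m\to f$ in $C^{\infty}([0,T];\mathcal D(\Omega))$ by \eqref{2.3c}, the continuous-dependence estimate \eqref{ad} yields $u_m\to u$ in $W_q^{2(l+1),l+1}(Q)$ for each $l\in\mathbb N$, and hence in $C^{\infty}(\overline Q)$. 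This establishes both uniqueness and the Taylor representation \eqref{2.9}.

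The decisive new point, and the step I expect to be the main obstacle, is to upgrade $u\in C^{\infty}(\overline Q)$ to $u\in C^{\infty}([0,T];\mathcal D(\Omega))$. Here I would combine the support localization of the second step with the convergence of the third: each partial sum $u_m(\cdot,t)$ is a finite linear combination of the coefficients $\frac{\di^k u}{\di t^k}(\cdot,0)$, all supported in the fixed compact set $K\subset\Omega$, so $\supp u_m(\cdot,t)\subset K$ uniformly in $m$ and $t$; since $u_m\to u$ in $C^{\infty}(\overline Q)$ the convergence is in particular pointwise, so the inclusion passes to the limit and $\supp u(\cdot,t)\subset K$ for every $t\in[0,T]$. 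Finally, continuity of $(f,u_0,0)\mapsto u$ from $U_1$ into $C^{\infty}([0,T];\mathcal D(\Omega))$ would follow from the continuous dependence of the parabolic solution on its data established in Theorem~\ref{tyry7}, together with the uniform support bound by $K$, which confines all the approximating solutions and their limit to the same compact subset of $\Omega$.
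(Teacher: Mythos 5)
Your proposal is correct and takes essentially the same route as the paper: the paper's entire proof of this corollary is the remark that it follows from the proof of Theorem~\ref{tyry7}, and what you write out is exactly that proof re-run with $\Omega$ in the role of $\Omega_1$ (infinite-order compatibility from the compactly supported data, the LSU/Solonnikov existence theory giving $u\in W_q^{2(k+1),k+1}(Q)$ for all $k$, and convergence of the truncated Taylor sums via the continuous-dependence estimate \eqref{ad}). Your explicit induction showing that all coefficients $\frac{\di^k u}{\di t^k}(\cdot,0)$, and hence $u(\cdot,t)$ itself, stay supported in a fixed compact $K\subset\Omega$ is precisely the step the paper leaves implicit (it is what guarantees the zero boundary condition on $S_T$ and the $C^\infty([0,T];\mathcal{D}(\Omega))$-valued conclusion), so supplying it makes your write-up slightly more complete rather than genuinely different.
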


\subsection{Solution of initial-boundary value problems  in Sobolev
space}

We consider the problem \eqref{2.20},  \eqref{2.21} in which we are given $f,u_0,u_b $.
We suppose that
 \begin{equation} \label{2115}
f \in L^2(Q), \quad u_0 \in H^1(\Omega), \quad
     u_b \in  H^{\frac32, \frac34}(S_T),  \quad u_0(x)=u_b(x,0) \,\,\, x  \in S.
\end{equation}
In this  case, the compatibility condition of order zero is satisfied.

For the sake of simplicity, we assume that the coefficients of the equation \eqref{2.20}
are elements of $C^{\infty}(\overline{Q})$, and they are represented in the form of
Taylor expansion in $t$ with coefficients depending on $x$, i.e., \eqref{2.3b}, \eqref{2.3d} hold,
and the boundary $S$ is of the class $C^\infty$.

It follows from the corollary to the Stone--Weierstrass theorem that the set of
tensor products of polynomials in $x$ and polynomials in
$t$ is dense in $C^{\infty}(\overline{Q})$. Therefore, the solution to the equation
with non-smooth coefficients is obtained as the limit of solutions of equations with  smooth
 coefficients as above.

By analogy, a non-smooth boundary $S$ can be approximated by boundaries of the class $C^{\infty}$.
In this case, solutions for smooth boundaries converge to the solution for non-smooth
boundary  in the corresponding space, see
\cite{Lit.6}.

It follows from the known results, see e.g.  \cite{LiM.1}, Chapter  4, Theorems 2.3 and 6.2,
\cite{Sol.2}, Chapter V, Theorem 5.4 that, under the above conditions,   there exists
a unique solution to the problem \eqref{2.20}, \eqref{2.21} such that
\begin{equation}\label{2116}
u \in H^{2,1}(Q).
\end{equation}

We define the following function:
\begin{gather}
 w(x,t)=
 \begin{cases}
  u_b(Px,t)e^{1-\frac{a^2}{a^2-(x-Px)^2}}&\text{if } |x-Px|<a, \\
  0&\text{if } |x-Px|\ge a.
 \end{cases}
 \label{akm}
 \end{gather}
 Here $x\in \Omega, \,t \in (0,T)$,
 $P$ is the operator of projection of points of $\Omega$ onto $S$,
 $a$ is a small positive constant. Note that $w \in  H^{2,1}(Q).$

 Let
\begin{equation}\label{2118}
\tilde{u} = u -w.
\end{equation}
\eqref{2116}, \eqref{akm} imply
\begin{equation}
\tilde{u} \in  H^{2,1}(Q),\quad\tilde{u}|_{S_T}=0, \quad
\tilde{u}|_{t=0}=u_0-w|_{t=0} \in  H^1_0(\Omega). \label{ab1}
\end{equation}
The function $\tilde{u}$ is the solution to the following problem:
\begin{gather}
\frac{\di\tilde{u}}{\di{t}}-
A\bigg(x,t,\frac{\di}{\di{x}}\bigg)\tilde{u}=\tilde{f}, \notag \\
 \tilde{u}|_{S_T}=0, \quad  \tilde{u}|_{t=0}=u_0-w|_{t=0} \in H^1_0(\Omega),\label{ab2}
\end{gather}
where
\begin{equation}
\tilde{f}=f-\frac{\di{w}}{\di{t}}+A\bigg(x,t,\frac{\di}{\di{x}}\bigg)w \in  L^2(Q).\label{ab3}
\end{equation}
 \eqref{2115},  \eqref{2118}, \eqref{ab2}
imply
\begin{equation}
\tilde{u}(x,0) = u_b(x,0)-w(x,0)=0, \quad x \in S.\label{2117}
 \end{equation}
 Therefore, the compatibility condition of order zero is satisfied, and there
 exists a unique solution to the problem \eqref{ab2} such that $\tilde u \in H^{2,1}(Q)$,  $ u|_{S_T}=0$.

 Let $\{\tilde{f}_m, \tilde{u}_{0m}\}$ be a sequence such that
 \begin{align}
& \tilde{f}_m \in C^{\infty}([0,T],\mathcal{D}(\Omega)), \quad \tilde{f}_m (x,t) =
         \sum_{k=0}^{\infty}\,\frac 1{k!}\,\frac {\di^k \tilde{f}_m }{\di t^k}(x,0)t^k,
\quad \tilde{f}_m  \to \tilde{f} \text{ in } L^2(Q), \notag \\
&\tilde{u}_{0m} \in \mathcal{D}(\Omega),
\quad\tilde{u}_{0m}\to u_0 - w|_{t=0}
\text{ in }H^1_0(\Omega).
\label{2122}
 \end{align}
Consider the problem: Find $\tilde{u}_m$  such that
  \begin{align}
& \tilde{u}_m \in C^{\infty}([0,T];\mathcal{D}(\Omega)),  \notag\\
& \frac{\di \tilde{u}_m }{\di t} -  A\bigg(x,t,\frac{\di}{\di x}\bigg)\tilde{u}_m = \tilde{f}_m,  \notag\\
& \tilde{u}_m (\cdot,0) = \tilde{u}_{0m}.            \label{2123}
 \end{align}
We can assume that the functions $\tilde{f_m}$  and $ \tilde{u}_{0m}$     are extended  by zero to domains
$Q_1$  and $\Omega_1$ so that $(\tilde{f_m},\tilde{u}_{0m})  \in U$, see \eqref{2.3a}. Then by Theorem 2.1,
there exists a unique solution to our  problem in $Q_1$, and it is determined by  \eqref{2.15}, where the
functions $\check{u}$ and  $\check{u}_0$ are replaced by   $\tilde{u}_m$  and   $\tilde{u}_{0m}$. Thus, the solution
to the problem \eqref{2123}   belongs  to   $C^{\infty}(\overline{Q})$  and it is
represented in the form
\begin{equation}
\tilde{u}_m (x,t)= \tilde{u}_{0m}(x) + \sum_{k=1}^{\infty}\frac1{k!}\frac{\di^k \tilde{u}_m}{\di t^k}(x,0)t^k,
 \quad (x,t)\in Q, \label{eb}
\end{equation}
where $\frac{\di^k \tilde{u}_m}{\di t^k}$ are determined by \eqref{2.7}.

\eqref{ab2}, \eqref{2123} and
\cite{Sol.2}, Theorem 5.4, Chapter V imply
\begin{equation}\label{2125}
\|\tilde{u}_m  - \tilde{u}\|_{H^{2,1}(Q)} \le c(\|\tilde{f}_m - \tilde{f}\|_{L^2(Q)} + \|\tilde{u}_{0m} - \tilde{u}_0+w|_{t=0}\|_{H^1_0 (\Omega)}.
\end{equation}
Now, \eqref{2122} yields
\begin{equation}\label{2126}
\tilde{u}_m  \to \tilde{u} \text{ in } H^{2,1}(Q).
\end{equation}

Thus, we have proved the following result:

\begin{theorem}
Let $\Omega$ be a bounded domain in ${\mathbb{R}}^n$ with a boundary $S$ of the class
$C^\infty$, $T\in (0,\infty)$,  and let the conditions \eqref{2.3b}, \eqref{2.3d}  be satisfied.
Let also the conditions \eqref{2115}
 be satisfied.
 Then, there exists a unique solution to the problem  \eqref{2.20}, \eqref{2.21} that satisfies
\eqref{2116}, and it  is represented
in the form $u = \tilde{u} + w$, where $w$ is given by   \eqref{akm}, and $\tilde{u}$
is determined by \eqref{2126}.
\end{theorem}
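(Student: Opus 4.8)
The plan is to reduce the inhomogeneous-boundary problem \eqref{2.20}, \eqref{2.21} to a homogeneous-boundary problem amenable to the Taylor-expansion machinery of Theorem \ref{tyry7}, and then to recover the asserted solution by a density-plus-stability argument. First I would record existence and uniqueness of $u \in H^{2,1}(Q)$ directly from the classical theory for second-order parabolic equations: under \eqref{2.3b}, \eqref{2.3d} the operator $A$ is uniformly elliptic with smooth, time-analytic coefficients, and with data satisfying \eqref{2115} the compatibility condition of order zero holds, so the cited results of Lions--Magenes and Solonnikov guarantee a unique $H^{2,1}(Q)$ solution satisfying \eqref{2116}. The substantive content is then the representation $u = \tilde u + w$ with $\tilde u$ obtained as a limit of explicit Taylor-series solutions.

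Second, I would construct the lifting $w$ by formula \eqref{akm}. The purpose of the exponential cutoff composed with the projection $P$ onto $S$ is that, for $x \in S$, one has $Px = x$ and $|x-Px| = 0$, so the exponent equals $1 - 1 = 0$ and $w|_{S_T} = u_b$, while away from a tubular neighbourhood of $S$ the function vanishes identically. Setting $\tilde u = u - w$ as in \eqref{2118} and $\tilde f = f - \di w/\di t + A w$ as in \eqref{ab3}, one checks that $\tilde u$ solves the homogeneous problem \eqref{ab2}, that $\tilde f \in L^2(Q)$, and that $\tilde u|_{t=0} = u_0 - w|_{t=0} \in H^1_0(\Omega)$ with $\tilde u(x,0)=0$ on $S$, so the order-zero compatibility \eqref{2117} holds for the reduced problem.

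Third, I would approximate the data of the homogeneous problem as in \eqref{2122}: choose time-analytic $\tilde f_m \in C^\infty([0,T];\mathcal D(\Omega))$ with $\tilde f_m \to \tilde f$ in $L^2(Q)$, and $\tilde u_{0m} \in \mathcal D(\Omega)$ with $\tilde u_{0m} \to u_0 - w|_{t=0}$ in $H^1_0(\Omega)$; such approximations exist by density of $\mathcal D(\Omega)$ and of the time-analytic functions. Because these data vanish near $S$, every coefficient $\di^k \tilde u_m/\di t^k(\cdot,0)$ produced by the recurrence \eqref{2.7} also vanishes near $S$, so the compatibility condition of order $\infty$ is satisfied and Theorem \ref{tyry7} (after extending by zero to $Q_1$) yields an exact solution $\tilde u_m \in C^\infty(\overline Q)$ in the Taylor form \eqref{eb}. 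Passing to the limit is then a stability argument: the continuous-dependence estimate \eqref{2125} bounds $\|\tilde u_m - \tilde u\|_{H^{2,1}(Q)}$ by the $L^2$- and $H^1_0$-norms of the data differences, giving \eqref{2126}, whence $u = \tilde u + w$ is the desired solution and uniqueness is inherited from the uniqueness in $H^{2,1}(Q)$.

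I expect the main obstacle to be the interface between full and partial compatibility. Theorem \ref{tyry7} requires compatibility of order $\infty$, which forces the approximate data to vanish to all orders in $t$ at $t=0$ near $S$, whereas the target datum $u_0 - w|_{t=0}$ lies only in $H^1_0(\Omega)$ and the limit problem carries merely order-zero compatibility. The argument succeeds precisely because convergence is measured in the $H^{2,1}(Q)$ topology, which is insensitive to the higher-order compatibility destroyed in the limit; making this rigorous rests entirely on the estimate \eqref{2125}, so verifying that its constant is uniform in $m$, independent of the approximating sequence, is the crucial step. A secondary technical point is confirming that the lifting \eqref{akm} genuinely belongs to $H^{2,1}(Q)$ given only $u_b \in H^{3/2,3/4}(S_T)$, which relies on the smoothness of the projection $P$ in a tubular neighbourhood of the $C^\infty$ boundary $S$.
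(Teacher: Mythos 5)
Your proposal is correct and follows essentially the same route as the paper's own proof: classical $H^{2,1}(Q)$ existence under zero-order compatibility, the lifting $w$ from \eqref{akm}, reduction to the homogeneous problem \eqref{ab2}, approximation of the data as in \eqref{2122} with Taylor-form solutions $\tilde u_m$ supplied by Theorem \ref{tyry7} after zero-extension to $Q_1$, and passage to the limit via the stability estimate \eqref{2125}. Your closing observations (that the $H^{2,1}$ topology absorbs the loss of infinite-order compatibility in the limit, and that $w\in H^{2,1}(Q)$ needs the smoothness of the projection $P$ near the $C^\infty$ boundary) are sound refinements of points the paper asserts without comment.
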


\begin{remark}
 When applying Theorem 2.2  in practice, one can compute the solution to the problem
 \eqref{2123} directly by \eqref{eb}  and  \eqref{2.7}, without  the extension that was used to prove  converges of the series.
\end{remark}

\begin{remark} Theorem 2.2 also holds in the case
where $u_b=0$. Indeed, we just need to take $w=0$ in the above computations.
\end{remark}

\begin{remark}  In practical applications the data of the problem are usually not accurately given, often they are
determined by intuition, or even are plucked out of thin air. Therefore, in such a case, it makes no sense  to solve
the problem \eqref{2123}
for a series of functions $\{\tilde{f}_m, \tilde{u}_{0m}\}$ which satisfy the condition \eqref{2122}. It is sufficient to
solve problem \eqref{2123} for one or two pairs $\tilde{f}_m, \tilde{u}_{0m}$  which are close  to
$\tilde{f}$  and $\tilde{u}_{0}$     with not  a high precision. Moreover, here
${\tilde{f}_m}$ can be taken in a form of a finite sum of the Taylor expansion,
which is suitable for a given $T$. In this case, if $\tilde{f}_m=\tilde{f}_{ml}$, where
\begin{equation}
\tilde{f}_{ml} (x,t) =
         \sum_{k=0}^l\,\frac 1{k!}\,\frac {\di^k \tilde{f}_m }{\di t^k}(x,0)t^k, \label{2gd}
\end{equation}
 then exact solution to the problem \eqref{2123} is the function $\tilde{u}_m (x,t)=\tilde{u}_{m(l+1)} (x,t) $ where
\begin{equation}
\tilde{u}_{m(l+1)} (x,t)= \tilde{u}_{0m}(x) + \sum_{k=1}^{l+1}\frac1{k!}\frac{\di^k \tilde{u}_m}{\di t^k}(x,0)t^k,
 \quad (x,t)\in Q, \label{2ge}
\end{equation}
see \eqref{2.16}, \eqref{2.17}, \eqref{ab}.
\end{remark}

\begin{remark}
Numerical solution of the problem \eqref{2.20}, \eqref{2.21} in the case of a large convection, when the
norm of one of the coefficients $a_i$ of the operator $A$ is large in $L^\infty(Q)$ is a very hard problem.
Our method permits one to construct the exact solution to the problem \eqref{2123}  in the form \eqref{eb}.
Moreover, if $\tilde{f_m} = \tilde{f_{ml}}$  is represented in the form \eqref{2gd}, then the solution to the problem
\eqref{2123} is represented in the form \eqref{2ge}.
\end{remark}

\subsection{Nonlinear parabolic equation}
We consider the following problem:
\begin{align}
&\frac{\di u}{\di t} - A\bigg(x,t,\frac{\di}{\di x}\bigg)\,u + \bigg(b_0(x,t)u^2 + b_i(x,t)\frac{\di u}{\di x_i} u \notag \\
&         + b_{ij}(x,t)\, \frac{\di u}{\di x_i}\,\frac{\di u}{\di x_j}\bigg)\lambda  = f \,\,\text{ in } Q, \quad
             i,j = 1,2,\dots,n,         \label{2.58} \\
&u|_{t=0} = u_0, \,\, u|_{S_T}=0.                 \label{2.59}
\end{align}
As before, $Q = \Omega\times (0,T)$, $\Omega$ is a bounded domain in  ${\mathbb{R}}^n$ with
a boundary $S$ of the class $C^{\infty}$, $T<\infty$.

We suppose that $A\big(x,t,\frac{\di}{\di x}\big)$ is defined by \eqref{2.5}  and the conditions
\eqref{2.3b}, \eqref{2.3d}  are satisfied. Furthermore,
\begin{align}
&f\in C^{\infty}([0,T];\mathcal{D}(\Omega)),\,\, f(x,t)=  \sum_{k=0}^{\infty}\,\frac 1{k!}\,
\frac{\di^k f}{\di t^k}(x,0)\,t^k, \,\, u_0 \, \in \mathcal{D}(\Omega),  \notag \\
&b_0 \in  C^{\infty}(\overline{Q}), \,\,\,b_0(x,t) =  \sum_{k=0}^{\infty}\,\frac 1{k!}\, \frac{\di^k b_0}{\di t^k}(x,0)\,t^k,         \notag \\
&b_i \in  C^{\infty}(\overline{Q}), \,\, \,b_i (x,t) =  \sum_{k=0}^{\infty}\,\frac 1{k!}\, \frac{\di^k b_i}{\di t^k}(x,0)\,t^k, \,\,
i = 1,2,\dots,n,        \notag\\
&b_{ij} \in  C^{\infty}(\overline{Q}), \,\, \,b_{ij} (x,t) = \sum_{k=0}^{\infty}\,\frac 1{k!}\,
                \frac{\di^k b_{ij}}{\di t^k}(x,0)\,t^k, \,\,\, i,j = 1,2,\dots,n,         \label{2.60}
\end{align}
and $\lambda$ is a small positive parameter, $\lambda \in (0,\check{\lambda}]$,
$\check{\lambda} > 0$.
We define the following mapping:
\begin{equation}\label{2.61}
M(u) = b_0 u^2 + b_i\, \frac{\di u}{\di x_i} \, u + b_{ij} \frac{\di u}{\di x_i}\, \frac{\di u}{\di x_j}.
\end{equation}
Equation \eqref{2.58} can be represented  in the form
\begin{equation}\label{2.62c}
\frac{\di u}{\di t} -  A\bigg(x,t,\frac{\di}{\di x}\bigg)\,u  + \lambda M(u) = f.
\end{equation}
We differentiate equation \eqref{2.62c}  in $t$  $k-1$  times and set $t = 0$.
We obtain the relations
\begin{align}
&\frac{\di^k u}{\di t^k}(\cdot,0) = \bigg(\frac{\di^{k-1}}{\di t^{k-1}}
            \bigg(A\bigg(x,t,\frac{\di}{\di x}\bigg)\,u\bigg)\bigg)(\cdot,0)
       - \lambda\bigg(\frac{\di^{k-1}}{\di t^{k-1}}\, M(u)\bigg) (\cdot,0)
       +    \frac{\di^{k-1}}{\di t^{k-1}}\,f(\cdot,0),  \notag\\
&  k = 1,2,\dots,     \label{2.63}
\end{align}
where
\begin{align}
\frac{\di^{k-1}}{\di t^{k-1}}\, M(u) = &\sum_{l=0}^{k-1} \,C_{k-1}^l \bigg(\frac{\di^l b_0}{\di t^l}\,
         \frac{\di^{k-1-l} u^2}{\di t^{k-1-l}}
        + \frac{\di^l b_i}{\di t^l}\,\frac{\di^{k-1-l}}{\di t^{k-1-l}}\bigg(\frac{\di u}{\di x_i}\,u\bigg)  \notag\\
&                 +  \frac{\di^l b_{ij}}{\di t^l}\,\frac{\di^{k-1-l}}{\di t^{k-1-l}}\bigg(\frac{\di u}{\di x_i}\,
             \frac{\di u}{\di x_j}\bigg)\bigg).                    \label{2.64}
\end{align}
\begin{theorem}
Let $\Omega$ be a bounded domain in ${\mathbb{R}}^n$ with a boundary $S$ of the class
$C^\infty$. Suppose that the conditions \eqref{2.3b}, \eqref{2.3d},  \eqref{2.60} are satisfied.
Then for any $l,q$ such that $l\in \mathbb{N}$, $(n+2)/2q < l$, $q \ge 2$, there is
$\lambda_0 > 0$   such that,
for any $\lambda\in (0,\lambda_0)$, there exists a unique solution $u=u_\lambda$
to the problem \eqref{2.58}, \eqref{2.59} such that $u_\lambda \in W_q^{2l+2,l+1}(Q)$ and
\begin{equation}\label{2.65}
u_{\lambda}(x,t) = u_0(x) + \sum_{k=1}^\infty\,\frac 1{k!}\,
              \frac{\di^k u}{\di t^k}(x,0)\,t^k, \quad (x,t) \in \overline{Q},
\end{equation}
where $ \frac{\di^k u}{\di t^k}(x,0)$ is determined  by \eqref{2.63} and \eqref{2.64}.
Furthermore,
 $\lambda \mapsto u_\lambda$ is s continuous mapping of $(0,\lambda_0)$ into
$W_q^{2l+2,l+1}(Q)$.
\end{theorem}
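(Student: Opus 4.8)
The plan is to treat the nonlinear term $\lambda M(u)$ as a small perturbation of the linear parabolic problem already solved in Theorem \ref{tyry7}, and to produce $u_\lambda$ as the fixed point of a contraction that exists once $\lambda$ is small. Fix $l,q$ with $(n+2)/2q<l$, $q\ge 2$, work in $W_q^{2l+2,l+1}(Q)$, and define a map $\Phi_\lambda$ sending $v$ to the unique solution $u=\Phi_\lambda(v)$ of the linear problem
\begin{equation}\notag
\frac{\di u}{\di t}-A\bigg(x,t,\frac{\di}{\di x}\bigg)u = f-\lambda M(v),\qquad u|_{t=0}=u_0,\quad u|_{S_T}=0,
\end{equation}
with $M$ given by \eqref{2.61}; a fixed point of $\Phi_\lambda$ is exactly a solution of \eqref{2.58}, \eqref{2.59}. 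As in the proof of Theorem \ref{tyry7}, I would extend the data by zero to $\Omega_1\supset\overline\Omega$: since $u_0\in\mathcal D(\Omega)$ and $f\in C^\infty([0,T];\mathcal D(\Omega))$ are compactly supported, and products of compactly supported functions stay compactly supported, the right-hand side $f-\lambda M(v)$ keeps the support and Taylor structure needed so that every compatibility condition holds and the linear solver of Theorem \ref{tyry7} applies, giving the maximal-regularity bound
\begin{equation}\notag
\|\Phi_\lambda(v)\|_{W_q^{2l+2,l+1}(Q)}\le c\big(\|f\|_{W_q^{2l,l}(Q)}+\lambda\|M(v)\|_{W_q^{2l,l}(Q)}\big)+c_0,
\end{equation}
where $c_0$ depends on $u_0$ through its trace norm (cf.\ \cite{LSU.}, \cite{Sol.2}).

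The crux of the argument is the nonlinear estimate, and this is exactly where the hypothesis $(n+2)/2q<l$ enters. Under this condition the anisotropic parabolic space $W_q^{2l,l}(Q)$ embeds continuously into $C(\overline Q)$ and is a Banach algebra under pointwise multiplication. Since $v\in W_q^{2l+2,l+1}(Q)$ has first spatial derivatives $\di v/\di x_i\in W_q^{2l+1,l+1}(Q)\subset W_q^{2l,l}(Q)$, each of the three terms $b_0v^2$, $b_i(\di v/\di x_i)v$, and $b_{ij}(\di v/\di x_i)(\di v/\di x_j)$ in \eqref{2.61} is a product of factors lying in this algebra (the coefficients $b_0,b_i,b_{ij}$ being smooth). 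Hence $M$ maps $W_q^{2l+2,l+1}(Q)$ into $W_q^{2l,l}(Q)$ with a quadratic bound $\|M(v)\|_{W_q^{2l,l}}\le C\|v\|_{W_q^{2l+2,l+1}}^2$, and, being quadratic, $M$ is Lipschitz on bounded sets: $\|M(v_1)-M(v_2)\|_{W_q^{2l,l}}\le C(\|v_1\|+\|v_2\|)\,\|v_1-v_2\|$ in the $W_q^{2l+2,l+1}$ norms. Establishing these algebra and product estimates, in particular for the terms carrying two space-derivative factors, is the main technical obstacle.

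With the nonlinear estimate in hand the contraction is routine. Choosing $R=2(c\|f\|_{W_q^{2l,l}}+c_0)$, which is independent of $\lambda$, the self-mapping bound $\|\Phi_\lambda(v)\|\le\tfrac12R+c\lambda CR^2$ and the contraction bound $\|\Phi_\lambda(v_1)-\Phi_\lambda(v_2)\|\le 2c\lambda CR\,\|v_1-v_2\|$ both hold on the closed ball $B_R\subset W_q^{2l+2,l+1}(Q)$ as soon as $\lambda<\lambda_0$, with $\lambda_0$ so small that $c\lambda_0 CR^2\le R/2$ and $2c\lambda_0 CR<1$. The Banach fixed-point theorem then yields a unique $u_\lambda\in B_R$; uniqueness in the full space follows since any solution lies in such a ball and the difference estimate forces coincidence. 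Continuity of $\lambda\mapsto u_\lambda$ comes from the same difference estimate: writing $\lambda_1M(u_{\lambda_1})-\lambda_2M(u_{\lambda_2})=\lambda_1\big(M(u_{\lambda_1})-M(u_{\lambda_2})\big)+(\lambda_1-\lambda_2)M(u_{\lambda_2})$ and absorbing the first term on the left gives $\|u_{\lambda_1}-u_{\lambda_2}\|_{W_q^{2l+2,l+1}(Q)}\le c'|\lambda_1-\lambda_2|$.

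Finally, for the Taylor representation \eqref{2.65} I would pass the structure through the iteration. Each Picard iterate $u^{(j)}=\Phi_\lambda(u^{(j-1)})$ is, by Theorem \ref{tyry7}, a function of the form \eqref{2.9} whose time derivatives at $t=0$ are produced by the linear recurrence with forcing $f-\lambda M(u^{(j-1)})$. Since $u^{(j)}\to u_\lambda$ in $W_q^{2l+2,l+1}(Q)\hookrightarrow C(\overline Q)$, the coefficients $\di^k u^{(j)}/\di t^k(\cdot,0)$ converge to $\di^k u_\lambda/\di t^k(\cdot,0)$, which therefore satisfy the semilinear recurrence obtained by differentiating \eqref{2.62c} in $t$ and setting $t=0$, namely \eqref{2.63}, \eqref{2.64}; the Taylor representation \eqref{2.65} passes to the limit together with its convergence. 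The delicate point here is the bookkeeping that matches the truncated-series argument of Theorem \ref{tyry7} to the nonlinear recurrence, but the quadratic structure of $M$ keeps $\di^k u/\di t^k(\cdot,0)$ dependent only on lower-order time derivatives, so the recurrence is genuinely solvable and the limit coefficients are the ones prescribed by \eqref{2.63}, \eqref{2.64}.
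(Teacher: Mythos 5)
You replace the paper's mechanism---an application of the implicit function theorem to $(\lambda,u)\mapsto \frac{\di u}{\di t}-A u+\lambda M(u)-f$ at $\lambda=0$, around the smooth solution supplied by Corollary 2.4, after verifying that $M$ is a Fr\'echet continuously differentiable map of $W_{q,0}^{2l+2,l+1}(Q)$ into $W_{q,0}^{2l+1,l+\frac12}(Q)$---by a Banach fixed-point scheme. That substitution is legitimate in principle (the two devices are close relatives), but your contraction has a genuine gap at the self-mapping step. For an arbitrary $v$ in the ball $B_R\subset W_q^{2l+2,l+1}(Q)$, the linear problem with right-hand side $f-\lambda M(v)$ does \emph{not} in general admit a solution in $W_q^{2l+2,l+1}(Q)$: solvability in this space by the linear theory of \cite{LSU.}, \cite{Sol.2} requires compatibility conditions on $S\times\{t=0\}$ up to order roughly $l$, and these involve the traces on $S$ of $\frac{\di^k}{\di t^k}\bigl(M(v)\bigr)(\cdot,0)$, $k\le l-1$, which in turn depend on the traces $\frac{\di^j v}{\di t^j}(\cdot,0)$, $j\le k$ --- none of which are constrained by membership in $B_R$ (not even $v(\cdot,0)$ is). Your justification (``products of compactly supported functions stay compactly supported \dots\ keeps the support and Taylor structure needed so that every compatibility condition holds'') is valid only when $v$ itself lies in $C^{\infty}([0,T];\mathcal{D}(\Omega))$ and has the Taylor structure; generic elements of $B_R$ have neither. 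The repair is to iterate on an invariant set on which the linear solver genuinely applies: either the Picard iterates themselves (starting from, say, $u^{(0)}=u_0$, every iterate stays in $C^{\infty}([0,T];\mathcal{D}(\Omega))$ with Taylor structure, and your contraction estimate then makes the sequence Cauchy in $W_q^{2l+2,l+1}(Q)$), or the closed affine subset of those $v$ whose time-derivative traces at $t=0$ up to order $l$ coincide with the compactly supported functions generated by \eqref{2.63}, \eqref{2.64}, a set which $\Phi_\lambda$ does preserve.

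The second gap concerns the representation \eqref{2.65}, which is the actual content of the theorem beyond well-posedness. Convergence $u^{(j)}\to u_\lambda$ in $W_q^{2l+2,l+1}(Q)$ controls time derivatives only up to order $l+1$, and traces at $t=0$ only up to order about $l$; it does not yield convergence of $\frac{\di^k u^{(j)}}{\di t^k}(\cdot,0)$ for all $k$, and, more importantly, it does not show that the series built from the limiting coefficients sums to $u_\lambda$ on $\overline Q$. (The coefficients do stabilize for an algebraic reason you could exploit: by the recurrence, the $k$-th Taylor coefficient of $u^{(j)}$ equals the value prescribed by \eqref{2.63}, \eqref{2.64} as soon as $j\ge k$. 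But that still leaves the summability claim unproved, and analyticity in $t$ is not preserved under uniform limits, so the limit of analytic iterates need not equal its formal Taylor series.) The paper handles exactly this point differently: it forms the partial sums $u_m$ of \eqref{2.51b}, observes that they solve the perturbed problems \eqref{2.62} with right-hand sides $f_{m-1}\to f$ (see \eqref{2.53a}, \eqref{2.54b}), and uses the stability built into the implicit function theorem --- the solution depends continuously on $(\lambda,\text{right-hand side})$ --- to conclude $u_m\to u_\lambda$ in $W_{q,0}^{2l+2,l+1}(Q)$, i.e.\ \eqref{2.54c}. Your argument needs an analogous quantitative stability step applied to $u_\lambda-u_m$ (your linear estimate combined with the Lipschitz property of $M$ can furnish it), in place of the assertion that the representation ``passes to the limit together with its convergence.''
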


\begin{proof} We consider the problem: Find $u_\lambda$ satisfying
\begin{align}
&\frac{\di u_{\lambda}}{\di t} -
A\bigg(x,t,\frac{\di}{\di x}\bigg)\,u_{\lambda}  + \lambda M(u_{\lambda}) = f
                                                                        \,\,\,\text{ in } Q,      \label{2.67} \\
&u_{\lambda}|_{t=0} = u_0 \,\,\,\text{ in } \Omega,  \quad  u_{\lambda}|_{S_T} = 0.
                                                                                                                     \label{2.68}
\end{align}

Denote
\begin{align}
&W_{q,0}^{2l+2,l+1}(Q) = \big\{ w\mid w \in W_q^{2l+2,l+1}(Q), \, w(\cdot,t) \in {\overset{\circ}
               {W}}{}_q^{2l+2- \frac 2q} (\Omega)  \text{ a.e. in }  (0,T), \,\,                       \notag\\
& \qquad l > \frac {n+2}{2q}, \,\, q\ge 2\big\},        \notag
 \end{align}
where ${\overset{\circ} {W}}{}_q^{2l+2- \frac 2q}(\Omega)$ is the closure of $\mathcal{D}(\Omega)$ in  $W_q^{2l+2- \frac 2q}(\Omega)$.

 The function $M$ maps the space $W_{q,0}^{2l+2,l+1}(Q)$ into $W_{q,0}^{2l+1,l+\frac 12}(Q)$.
 Let $u,h$ be elements of $W_{q,0}^{2l+2,l+1}(Q)$. We have
\begin{align}
&\lim_{\gamma\to 0} \,\frac {M(u+\gamma h) - M(u)}{\gamma} = 2 b_0 u h + b_i
       \bigg(\frac {\di u}{\di x_i}\,h + u\,\frac {\di h}{\di x_i}\bigg)         \notag \\
&\quad       + b_{ij}\bigg(\frac {\di u}{\di x_i}\frac {\di h}{\di x_j} + \frac {\di u}{\di x_j}\frac {\di h}{\di x_i}\bigg)
        = M'(u)\,h.        \notag
 \end{align}

It is easy to see that
\begin{equation}\notag
\|M(u + h) - M(u) - M'(u)h\|_{W_{q,0}^{2l+1,l+\frac12}(Q)} \le  c \|h\|_{W_{q,0}^{2l+2,l+1}(Q)}^2.
\end{equation}
Therefore, the operator $M$ is a Fr\'{e}chet continuously differentiable mapping of
$W_{q,0}^{2l+2,l+1}(Q)$ into $W_{q,0}^{2l+1,l+\frac12}(Q)$.

By Corollary 2.4,  for $\lambda = 0$, there exists a unique solution  to the  problem
\eqref{2.67},  \eqref{2.68}  that belongs to
$C^{\infty}([0,T];\mathcal{D}(\Omega))$, and it is determined by
\eqref{2.9}.

By applying the implicit function theorem, see e.g. \cite{Sch.}, Theorem 25, Chapter III, we obtain
that for any $l$, $q$  such that $\frac {n+2}{2q} < l$, $q \ge 2$, $l \in \mathbb{N}$, there is
$\lambda_0 > 0$  such that for any
$\lambda \in (0,\lambda_0)$, there exists a unique solution
$u_{\lambda}$ to the  problem \eqref{2.67}, \eqref{2.68}
such that $u_{\lambda} \in W_{q,0}^{2l+2,l+1}(Q)$, and the function
$\lambda \mapsto u_{\lambda} $ is a continuous mapping of
$(0,\lambda_0)$ into $W_{q,0}^{2l+2,l+1}(Q)$.

Informally, the solution to the problem \eqref{2.58}, \eqref{2.59} is represented
in the form
\eqref{2.65}.
Define $u_m$ as follows:
\begin{equation}\label{2.51b}
u_m(x,t) = u_0(x) + \sum_{k=1}^m\,\frac 1{k!}\,
          \frac{\di^k u}{\di t^k}(x,0)\,t^k, \quad (x,t) \in \overline{Q}.
\end{equation}
\eqref{2.63} and \eqref{2.51b} imply that

\begin{equation}\label{2.62}
\frac{\di u_m}{\di t}(x,t) -  A\bigg(x,t,\frac{\di}{\di x}\bigg)\,u_{m-1}(x,t)  +\lambda \sum_{k=0}^{m-1}\,\frac 1{k!}
           \, \bigg(  \frac{\di^k }{\di t^k}\,(M(u))\bigg)(x,0)\,t^k  = f_{m-1}(x,t),
\end{equation}
where
\begin{equation}\label{2.53a}
f_{m-1}(x,t)=\sum_{k=0}^{m-1}\,\frac 1{k!}\frac{\di^k f}{\di t^k}(x,0)t^k.
\end{equation}
It follows from \eqref{2.60} that
\begin{equation}\label{2.54b}
f_{m-1} \to f \,\, \,\text{in} \,\, \,C^{\infty} ([0,T];\,\mathcal{D}(\Omega)).
\end{equation}
By \eqref{2.60}  and \eqref{2.63}, we have
 $u_m \in C^{\infty} ([0,T];\,\mathcal{D}
 (\Omega))$.

We apply  the implicit function theorem to the case where $\lambda$ is from a small vicinity of  zero in the set of nonnegative numbers, and the right-hand side of
\eqref{2.58} belongs to a small vicinity  of $f$ in  $W^{2l,l}_{q,0}(Q)$. Then  \eqref{2.54b}
yields
\begin{equation}\label{2.54c}
u_m \to u_{\lambda} \,\, \,\text{in } W^{2l+2,l+1}_{q,0}(Q).
\end{equation}
Therefore, the series \eqref{2.65} converges in
$ W^{2l+2,l+1}_{q,0}(Q)$,   and gives the solution
to the problem \eqref{2.58},  \eqref{2.59}.
\end{proof}

We remark that, in the case $f \in L^2 (Q)$ and $u_0 \in H^1_0(\Omega)$,
 the solution to the problem \eqref{2.58}, \eqref{2.59} can be defined as the limit of
  solutions to this problem for  $f=\tilde{f}_m$ and $u_0=\tilde{u}_{0m}$ that are determined by  \eqref{2122} with  $w=0$.

 \subsection{Construction of functions of  $\mathcal{D}(\Omega)$ }

Let $\Omega_2$  be a domain   in ${\mathbb{R}}^n$  such that $\overline{\Omega}_2  \subset \Omega,  $  and $S_2$
be the boundary of $\Omega_2$. We suppose that
\begin{equation}
 d(x,S)=2a \,\, \text{for any}\,\, x \in S_2,  \label{7.1}
 \end{equation}
 where
 \begin{equation}
 d(x,S)=\min
 \bigg(\sum_{i=1}^n
\big (x_i-y_i\big)^2\bigg)^{\frac{1}{2}}, \,\, y=(y_1, \dots, y_n) \in S,
 \end{equation}
 and $a$ is a small positive constant.

 Define the following function:
 \begin{gather}
 g_a(x)=
 \begin{cases}
  1,& \text{if } d(x,S_2)>0, \,\, x \in \Omega_2, \\
  e^{1-\frac{a^2}{a^2-(d(x,S_2))^2}},& \text{if }
  d(x,S_2) \in [0,a),
   \,\, x \in \Omega \setminus \Omega_2, \\
  0,&  \text{if } d(x,S_2) \ge a, \,\, x \in \Omega \setminus \Omega_2.
 \end{cases}
 \label{7.3}
 \end{gather}

The function $g_a$ belongs to $\mathcal{D}(\Omega)$, and if
$f \in  C^{\infty}(\overline{\Omega})$, then $w=f \cdot g_a \in \mathcal{D}(\Omega),$ and
 \begin{equation}
\text{the set} \{P_m\cdot g_a\},
  \,\, m \in\mathbb{N}, \,\, a>0  \,\,  \text{is dense in} \,\,H^l_0(\Omega),  \,\, l \in \mathbb{N}, \label{7.4}
\end{equation}
where $P_m$  is any polynomial in $x$ such that the order of polynomial in $x_i, i=1,\cdots, n$, does not exceed $m$.

In the general case, a smooth boundary $S$ is defined by local cards, i.e., by local coordinate systems
$(y^k_1, \cdots, y^k_n )$  and mappings $F_k$, $k=1, \cdots, \beta$, such that
\begin{equation}
y^k_n=F_k(y^k_1, \dots, y^k_{n-1}),         \label{7.4a}
\end{equation}
and by  a corresponding partition of unity, see e.g. \cite{Lio.1, Sch., Sol.2}.

For a ball or a paraboloid, the boundary $S$ is defined by
\begin{equation}
\omega_b \,(x)=
\sum_{i=1}^n\, x_i^2 -c^2=0, \quad \omega_p=\sum_{i=1}^n\,\frac{ x_i^2}{b_i^2} -c^2=0, \label{7.5}
\end{equation}
where $b_i$ and $c$ are positive constants.

Polyhedral domains are widely used in practical computations. For convex polyhedron, whose  faces
are defined by equations
\begin{equation}
f_k(x)=\sum_{i=1}^n \, a_{ik}x_i-c_k=0,\quad k=1,\dots, m, \label{7.6}
\end{equation}
where $a_{ik} $ and $c_k$  are  constants, the boundary $S$ is given as follows:
\begin{equation}
\omega_{cp}(x)=\pm \prod_{k=1}^m   f_k(x)=\pm \prod_{k=1}^m  \sum_{i=1}^n \, a_{ik}x_i-c_k=0, \label{7.7}
\end{equation}
where the sign is chosen so that $\omega_{cp}(x)>0$  in $\Omega.$

The domain $\Omega_2$ of polyhedron is the polyhedron with the boundary $S_2$ that satisfies
the condition \eqref{7.1}. In this case, \eqref{7.4} holds.

The boundary  of polyhedron is infinitely differentiable everywhere with exception of angular
 points, at which it is not differentiable.
 Nevertheless, in small vicinities of angular points  this boundary can be regularized by convolution
 of the function $F_k$, see   \eqref{7.4a}, with an infinitely differentiable   function with a small support, in particular, with the bump function.

If the boundary of a polyhedron is not regularized, then the computation of the solution to the problem
 in exteriors of any small vicinities of angular points can be fulfilled.

 For the case of non-convex polyhedron,  one can identify the faces of the polyhedron with local cards, without
 using a partition of unity. That is,  one assumes that $f_k$ are the identity mappings of the
 sets
 $$ G_k=\{x\mid f_k(x)=0, \,\,    f_k(x) \in S \} $$
 onto itself, and $G_k$ are defined so that
  $ \bigcup_{k=1}^m G_k=S$.

 \section{System of parabolic equations}
Let us consider the following problem for a system of equations that are parabolic  in the sense of Petrowski:
Find $u = (u_1,u_2,\dots,u_N)$ such that
\begin{align}
&\frac{\di u_i}{\di t} - \frac{\di }{\di x_r}\bigg(a_{ijrm}(x,t)\,\frac{\di u_j}{\di x_m}\bigg) +
                 b_{jm}^i\,(x,t)\,\frac{\di u_j}{\di x_m} + g_j^i (x,t)u_j = f_i \quad  \text{in } Q, \notag \\
&i,j = 1,\dots,N, \quad r,m = 1,\dots,n, \label{2.40} \\
&u|_{t=0} =u_0 \,\,\text{ in }\Omega, \,\,\, u|_{S_T }=u_b.                             \label{2.41}
\end{align}
As before,  $Q = \Omega\times (0,T)$, $\Omega$ is a bounded domain in ${\mathbb{R}}^n$
with a boundary $S$ of the class $C^\infty$, $T\in (0,\infty)$.

We suppose that
\begin{gather}
f = (f_1,\dots,f_N) \in L^2 (Q)^N, \,\,\, u_0=(u_{01}, \dots, u_{0N}) \in  H^{1}(\Omega)^N,
 \notag \\  u_b=(u_{b1},  \dots, u_{bN} ) \in   H^{ \frac{3}{ 2},\frac{3}{4}}(S_T)^N, \,\, u_0(x)=u_b(x,0), \,\, x\in S,
                                                  \label{2.46a}
\end{gather}

and
\begin{align}
& a_{ijrm} \in  C^\infty({\overline{Q}}_1),   \,\,  a_{ijrm}(x,t) =  \sum_{k=0}^\infty\,\frac 1{k!}\,
                \frac{\di^k  a_{ijrm}}{\di t^k}(x,0)\,t^k, \,\, (x,t) \in \overline{Q}_1, \notag \\
& a_{ijrm}(x,t) \xi_r\xi_m \nu_j \nu_i\ge \mu  \sum_{r=1}^n \,  \xi_r^2   \sum_{i=1}^N  \nu_i^2,  \,\,
                (x,t)\in  \overline{Q}_1, \,\,
                  \xi_r \in \mathbb{R}, \,\, \nu_i  \in \mathbb{R}, \,\, \mu>0,   \label{2.47a} \\
&b_{jm}^i \in C^\infty({\overline{Q}}_1), \,\, b_{jm}^i (x,t) = \sum_{k=0}^\infty\,\frac 1{k!}\,
                \frac{\di^k b_{jm}^i }{\di t^k}(x,0)\,t^k, \,\,\, (x,t) \in \overline{Q}_1,  \label{2.48a} \\
&g_j^i \in   C^\infty({\overline{Q}}_1), \,\, g_j^i (x,t) = \sum_{k=0}^\infty\,\frac 1{k!}\,
                \frac{\di^k g_j^i }{\di t^k}(x,0)\,t^k, \,\,\, (x,t) \in \overline{Q}_1, \label{2.49a}
\end{align}
\eqref{2.46a} yields that, the compatibility condition of order zero is satisfied. It follows from  \cite{Sol.2} that there exists
a unique solution to the problem \eqref{2.40}, \eqref{2.41} such that $u \in H^{2,1}(Q)^N$.

Informal differentiation of  \eqref{2.40} in $t$ gives the following relations:
\begin{align}
\frac{\di^k u_i }{\di t^k}(\cdot,0)& = \bigg(\frac{\di^{k-1}}{\di t^{k-1}}\bigg(B_i\bigg(x,t,\frac{\di }{\di x}\bigg)
                       u\bigg)\bigg)(\cdot,0) + \frac{\di^{k-1} f_i}{\di t^{k-1}}(\cdot,0)  \notag\\
                                                   & = \sum_{j=0}^{k-1}\, C_{k-1}^j \bigg(\frac{\di^j B_i}{\di t^j}
                                                          \bigg(x,t,\frac{\di }{\di x}\bigg)\bigg)(\cdot,0) \bigg(\frac{\di^{k-1-j}u}
                                                          {\di t^{k-1-j}} \bigg)(\cdot,0)  \notag\\
                                                    & +  \frac{\di^{k-1} f_i}{\di t^{k-1}}(\cdot,0), \quad k = 1,2,\dots,
\label{2.51a}
\end{align}
where
\begin{align}
&B\bigg(x,t,\frac{\di }{\di x}\bigg)u = \Big{\{}B_i\bigg(x,t,\frac{\di }{\di x}\bigg)u\Big{\}}_{i=1}^N,     \notag\\
&B_i\bigg(x,t,\frac{\di }{\di x}\bigg)u = \frac{\di }{\di x_r}\bigg(a_{ijrm}(x,t)\, \frac{\di u_j}{\di x_m}\bigg)
              - b_{jm}^i(x,t)\, \frac{\di u_j}{\di x_m} - g_j^i\,u_j, \quad i = 1,\dots,N. \label{2.47}
\end{align}

We mention that the inequality for $a_{ijrm}$  in \eqref{2.47a}  is the  condition of strong ellipticity of the
 operator $B$ .

Equations \eqref{2.40} are represented in the form
\begin{equation}\label{2.48}
\frac{\di u_i}{\di t} - B_i\bigg(x,t,\frac{\di }{\di x}\bigg)u = f_i\,\,\text{ in }\,\,\,Q, \,\, i = 1,\dots,N.
\end{equation}
The existence of a unique solution to the problem   \eqref{2.40},  \eqref{2.41} such that $ u \in H^{2,1} (Q)^N $
follows from \cite{Sol.2}.

By analogy with \eqref{akm}, we define the following vector-function $w=(w_1, \dots, w_N)$:
\begin{gather}
 w_i (x,t)=
 \begin{cases}
 u_{bi}(Px,t)\exp\big(1-\frac{a^2}{a^2-(x-Px)^2}\big), &\text{if } |x-Px|<a,   \ i=1,\dots, N, \\
  0,&\text{if } |x-Px|\ge a.
 \end{cases}
 \label{al}
 \end{gather}
Then $w \in H^{2,1}(Q)^N$.

Let
\begin{equation}
\tilde{u}=u-w. \label{f}
\end{equation}
The function $\tilde{u}$ is the solution to the problem
\begin{gather}
\tilde{u} \in H^{2,1}(Q)^N, \notag \\
\frac{\di{\tilde{u}}_i}{\di{t}}-
B_i\bigg(x,t,\frac{\di}{\di{x}}\bigg)\tilde{u}=\tilde{f}_i,   \,\, \text{in} \,\, Q,       \notag \\
 \tilde{u}|_{S_T}=0, \quad  \tilde{u}|_{t=0}=u_0-w|_{t=0} \in H^1_0(Q)^N, \label{f1}
\end{gather}
where
\begin{equation}
\tilde{f}_i=f_i-\frac{\di{w}_i}{\di{t}}+B_i\bigg(x,t,\frac{\di}{\di{x}}\bigg)w \in L^2(Q^N).\label{f2}
\end{equation}
Let $(\tilde{f}_m,\tilde{u}_{0m})$  be a sequence such that
\begin{align}
& \tilde{f}_m \in C^{\infty}([0,T];\mathcal{D}(\Omega))^N, \quad \tilde{f}_m (x,t) =
         \sum_{k=0}^{\infty}\,\frac 1{k!}\,\frac {\di^k \tilde{f}_m }{\di t^k}(x,0)t^k,
\quad\tilde{f}_m  \to \tilde{f} \text{ in } L^2(Q)^N, \notag \\
&\tilde{u}_{0m} \in \mathcal{D}(\Omega)^N, \,\, \,
\,\,\,\tilde{u}_{0m}\to u_0 - w|_{t=0} \,\,\,
\text{in} \,\,\, H^1_0(\Omega)^N.
\label{f3}
 \end{align}
Consider the problem: Find $\tilde{u}_m$  such that
  \begin{gather}
 \tilde{u}_m \in C^{\infty}([0,T];\mathcal{D}(\Omega))^N,  \notag\\
\frac{\di {\tilde{u}}_{mi} }{\di t} -B_i
(x,t,\frac{\di}{\di{x}}){\tilde{u}}_m=\tilde{f}_{mi}   \text{ in } Q,       \notag \\
 \tilde{u}_m(\cdot,0)= \tilde{u}_{0m}.   \label{mf}
\end{gather}
It follows from \cite{Sol.2} that there exists a unique solutions to the problem \eqref{mf}. By analogy with the above,
 we get that
\begin{equation}
  \tilde{u}_m (x,t)= \tilde{u}_{0m}(x) + \sum_{k=1}^{k=\infty}\frac1{k!}\frac{\di^k \tilde{u}_m}{\di t^k}(x,0)t^k,
\quad (x,t)\in Q, \label{ml}
\end{equation}
where $\frac{\di^k \tilde{u}_m}{\di t^k}$ are determined by \eqref{2.51a} with  $u$  and  $f$ being replaced by  $\tilde{u}_m$
and $\tilde{f}_m$, respectively.

Since the solution to the problem  \eqref{mf} depends continuously on $ \tilde{f}_m,
\tilde{u}_{0m}$
formulas
\eqref{f1}, \eqref{f3}, and \eqref{mf} imply
\begin{equation}
\tilde{u}_m \to \tilde{u}  \text{ in }  H^{2,1} (Q)^N.
\end{equation}
Thus, we have proved

\begin{theorem}
Let $\Omega$ be a bounded domain in ${\mathbb{R}}^n$ with a boundary $S$ of the class
$C^{\infty}$ and $T\in (0,\infty)$. Suppose that the conditions \eqref{2.46a}--\eqref{2.49a}
           are satisfied.
Then there exists a unique solution to the problem \eqref{2.40}, \eqref{2.41}
 such that $u \in H^{2.1}(Q)^N   $,  and this solution is represented in the form
$u=\tilde{u}+w $, where $ \tilde{u}=\lim \tilde{u}_m $ and $w$ is given by \eqref{al}.
\end{theorem}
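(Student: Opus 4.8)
The plan is to mirror the argument of Theorem 2.2, now in the vector-valued setting, reducing the inhomogeneous boundary-value problem to a homogeneous one and then constructing the solution as a limit of smooth Taylor-series solutions. First I would record the existence of a unique $H^{2,1}(Q)^N$ solution to \eqref{2.40}, \eqref{2.41}: the strong-ellipticity hypothesis \eqref{2.47a} makes $B$ parabolic in the sense of Petrowski, and the matching condition $u_0(x)=u_b(x,0)$ on $S$ in \eqref{2.46a} is precisely the compatibility condition of order zero, so the Solonnikov theory \cite{Sol.2} applies and delivers both existence/uniqueness and continuous dependence on $(f,u_0,u_b)$.

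Next I would lift the boundary data. Define $w$ by the explicit formula \eqref{al}; since $Px=x$ and the exponential factor equals $1$ when $x\in S$, the trace of $w$ on $S_T$ is $u_b$, and the cut-off structure guarantees $w\in H^{2,1}(Q)^N$. Moreover $w(x,0)=u_b(x,0)=u_0(x)$ for $x\in S$ by the matching condition, so $u_0-w|_{t=0}$ vanishes on $S$ and hence belongs to $H^1_0(\Omega)^N$. Setting $\tilde u=u-w$ as in \eqref{f} then reduces the problem to \eqref{f1}: a strongly elliptic parabolic system with homogeneous Dirichlet condition, initial value $u_0-w|_{t=0}$, and right-hand side $\tilde f$ given by \eqref{f2}, which lies in $L^2(Q)^N$ because of the $H^{2,1}$ regularity of $w$ and the smoothness of the coefficients. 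The remaining task is therefore to solve \eqref{f1}.

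I would then approximate the data $(\tilde f,\,u_0-w|_{t=0})$ by sequences $(\tilde f_m,\tilde u_{0m})$ as in \eqref{f3}, chosen smooth, compactly supported in $\Omega$ for each $t$, and representable as Taylor series in $t$. Because each $\tilde u_{0m}\in\mathcal D(\Omega)^N$ and the time-derivatives of $\tilde f_m$ at $t=0$ lie in $\mathcal D(\Omega)^N$, the recurrence \eqref{2.51a} produces coefficients $\frac{\di^k \tilde u_m}{\di t^k}(\cdot,0)\in\mathcal D(\Omega)^N$ for every $k$, so the homogeneous boundary condition is satisfied to infinite order and the compatibility condition of order $k=\infty$ holds. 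Applying the system analogue of Theorem 2.1 (extending $\tilde f_m,\tilde u_{0m}$ by zero to the enlarged domain and invoking the Solonnikov estimates on the partial sums, exactly as in the scalar case) yields smooth solutions $\tilde u_m\in C^\infty([0,T];\mathcal D(\Omega))^N$ represented by the convergent Taylor expansion \eqref{ml} with coefficients from \eqref{2.51a}.

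Finally I would pass to the limit. The continuous-dependence estimate of \cite{Sol.2} applied to the difference of \eqref{f1} and \eqref{mf} bounds $\|\tilde u_m-\tilde u\|_{H^{2,1}(Q)^N}$ by $\|\tilde f_m-\tilde f\|_{L^2(Q)^N}+\|\tilde u_{0m}-(u_0-w|_{t=0})\|_{H^1_0(\Omega)^N}$, so \eqref{f3} forces $\tilde u_m\to\tilde u$ in $H^{2,1}(Q)^N$; then $u=\tilde u+w$ is the asserted solution, and uniqueness is inherited from the $H^{2,1}$ theory. The main obstacle is the system analogue of Theorem 2.1: one must verify that strong ellipticity \eqref{2.47a} survives under the reduction to the homogeneous problem and that the Solonnikov a priori estimates hold in the anisotropic spaces $W_q^{2(l+1),l+1}$ for the Petrowski-parabolic system, which is what guarantees convergence of the Taylor series \eqref{ml} rather than merely its formal existence.
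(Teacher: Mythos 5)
Your proposal is correct and follows essentially the same route as the paper: reduction via the lift $w$ from \eqref{al}, approximation of the data as in \eqref{f3}, smooth Taylor-series solutions of \eqref{mf} obtained by the system analogue of Theorem 2.1, and passage to the limit in $H^{2,1}(Q)^N$ by continuous dependence. The "main obstacle" you flag — carrying the Solonnikov estimates and the Taylor-series convergence argument over to the Petrowski-parabolic system — is precisely the step the paper itself handles only by the phrase "by analogy with the above," so your reconstruction matches the paper's proof in both structure and level of detail.
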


\section{System of hyperbolic equations}
\subsection{Problem with boundary condition at $t=0$}

We consider the problem: Find $u = (u_1,u_2,\dots,u_N )$ such that
\begin{align}
&\frac {\di^2 u_i}{\di t^2} - B_i\bigg(x,t, \frac {\di}{\di x}\bigg)u = f_i \,\,\,\text{ in }\,\, Q,
\quad i = 1,2,\dots,N,                       \label{3.80}\\
&u(x,0) = u_0(x), \,\, \frac {\di u}{\di t} (x,0) = u_1(x), \,\, x \in \overline{\Omega}, \,\, u_0(x)=u_b(x,0), \,\,x \in S.
                                                          \label{3.81}
\end{align}
Here $B_i\big(x,t, \frac {\di}{\di x}\big)$ are the components of the operator $B\big(x,t, \frac {\di}{\di x}\big)$
that are defined in \eqref{2.47}.

We assume that the coefficients of the operator $B\big(x,t, \frac {\di}{\di x}\big)$ satisfy the conditions
\eqref{2.47a}--\eqref{2.49a} and
\begin{align}
&(f,u_0,u_1) \in U_2,                        \notag\\
&U_2 = \big\{(f,u_0,u_1)\mid f = (f_1,\dots,f_N) \in C^{\infty}([0,T];\mathcal{D}(\Omega_1))^N, \notag\\
& f(x,t)  = \sum_{k=0}^\infty\,\frac 1{k!}\, \frac{\di^k f}{\di t^k}(x,0)\,t^k, \,\, (x,t) \in Q_1, \notag\\
&u_0 = (u_{01},\dots,u_{0N}) \in \mathcal{D}(\Omega_1)^N,\,\,\,
                                u_1 = (u_{11},\dots,u_{1N}) \in \mathcal{D}(\Omega_1)^N \big\}.  \label{3.82}
\end{align}

We differentiate equations \eqref{3.80} in $t$ $k-2$ times, $k\ge3$,  and set $t=0$.
This gives the following recurrence relation:
\begin{align}
\frac{\di^k u_i}{\di t^k}(\cdot,0)& = \frac{\di^{k-2} f_i}{\di t^{k-2}}(\cdot,0) +
                    \bigg(\frac{\di^{k-2}}{\di t^{k-2}}\bigg(B_i\bigg(x,t,\frac{\di}{\di x}\bigg) u\bigg)\bigg)(\cdot,0)
                           \notag\\
& = \frac{\di^{k-2} f_i}{\di t^{k-2}}(\cdot,0) + \sum_{j=0}^{k-2}\, C_{k-2}^j \bigg(\frac{\di^j B_i}{\di t^j}
        \bigg(x,t,\frac{\di}{\di x}\bigg)\bigg)(\cdot,0) \frac{\di^{k-2-j} u}{\di t^{k-2-j}}(\cdot,0).
                                \label{3.83}
\end{align}
Here $u(\cdot,0)$ and $\frac{\di u}{\di t}(\cdot,0)$ are prescribed.

\begin{theorem}
Let $\Omega$ be a bounded domain in ${\mathbb{R}}^n$ with a boundary $S$ of the class
$C^\infty$ and $T\in (0,\infty)$. Suppose that the conditions \eqref{3.82} \eqref{2.47a}-\eqref{2.49a}  are satisfied.
Then there exists a unique solution to the problem \eqref{3.80}, \eqref{3.81} such that $u \in  C^{\infty}(\overline{Q})^N$,
and this solution is represented in the form of the Taylor expansion
\begin{equation}\label{3.84}
u(x,t) = u_0(x) + u_1(x)t +\sum_{k=2}^\infty\,\frac 1{k!}\, \frac{\di^k u}{\di t^k}(x,0)\,t^k, \quad
             (x,t) \in \overline{Q}.
\end{equation}
The coefficients $\frac{\di^k u}{\di t^k}(\cdot,0)$ are determined by the recurrence relations
\eqref{3.83}. Furthermore,  the boundary condition function $u_b = u|_{S_T}$ is determined as follows:
\begin{equation}\label{3.85}
u_b(x,t) = u_0(x) + u_1(x)t +\sum_{k=2}^\infty\,\frac 1{k!}\, \frac{\di^k u}{\di t^k}(x,0)\,t^k, \,\,
             (x,t) \in S_T.
\end{equation}
The function $(f,u_0,u_1)\mapsto u$ that is defined by the solution  to the problem
\eqref{3.80},  \eqref{3.81} in the form \eqref{3.84}  is a continuous mapping of $U_2$ into
$C^{\infty}(\overline{Q})^N $.
\end{theorem}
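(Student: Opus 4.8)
The plan is to repeat the argument of the proof of Theorem~\ref{tyry7} line by line, with the parabolic existence and regularity theory replaced by its hyperbolic counterpart. First I would pass to the enlarged domain and treat the auxiliary problem of finding $\check u = (\check u_1,\dots,\check u_N)$ satisfying
\begin{gather}
\frac{\di^2 \check u_i}{\di t^2} - B_i\bigg(x,t,\frac{\di}{\di x}\bigg)\check u = f_i \,\,\text{ in } Q_1, \quad i = 1,\dots,N, \notag\\
\check u|_{t=0} = u_0, \quad \frac{\di \check u}{\di t}\bigg|_{t=0} = u_1 \,\,\text{ in } \Omega_1, \quad \check u|_{S_{1T}} = 0, \notag
\end{gather}
where $S_{1T} = S_1\times[0,T]$ and $S_1$ is the ($C^\infty$) boundary of $\Omega_1$. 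Since $u_0, u_1 \in \mathcal D(\Omega_1)^N$ and $f \in C^\infty([0,T];\mathcal D(\Omega_1))^N$ all have support inside $\Omega_1$ for every $t$, the recurrence \eqref{3.83} shows by induction on $k$ that each coefficient $\frac{\di^k \check u}{\di t^k}(\cdot,0)$ again belongs to $\mathcal D(\Omega_1)^N$ and hence vanishes on $S_1$. Consequently the compatibility condition of order $k$ holds for every $k\in\mathbb{N}$ relative to the homogeneous boundary datum, which is precisely what the hyperbolic regularity theory needs.

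Next I would invoke the existence-and-regularity result for strongly hyperbolic systems (the analogue of the parabolic theorems used in Theorem~\ref{tyry7}; see \cite{LiM.1} and \cite{Sol.2}). Here the strong ellipticity of $B$ postulated in \eqref{2.47a} guarantees that \eqref{3.80} is hyperbolic in the sense of Petrowski, so that for each $k$ the problem has a unique solution in an anisotropic Sobolev space whose order grows with $k$. Because compatibility holds to all orders, these solutions coincide and $\check u \in C^\infty(\overline Q_1)^N$; its time derivatives at $t=0$ are exactly the coefficients \eqref{3.83} of the expansion \eqref{3.84}.

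The convergence of the series is then handled as in \eqref{2.16}--\eqref{ad}. Setting
\[
\check u_m(x,t) = u_0(x) + u_1(x)\,t + \sum_{k=2}^m\,\frac1{k!}\,\frac{\di^k \check u}{\di t^k}(\cdot,0)\,t^k,
\]
the recurrence \eqref{3.83} shows that $\check u_m$ satisfies the same hyperbolic problem with $f$ replaced by its truncated Taylor polynomial $f_{m-2}$ and with $B$ acting on the correspondingly truncated argument, while retaining the homogeneous boundary datum and the prescribed data at $t=0$. Since $f_{m-2} \to f$ in $C^\infty([0,T];\mathcal D(\Omega_1))^N$, the continuous dependence of the hyperbolic solution on its data forces $\check u_m \to \check u$ in each anisotropic Sobolev space, hence in $C^\infty(\overline Q_1)^N$; this is the assertion that the Taylor series \eqref{3.84} converges. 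Restricting $u = \check u|_Q$ yields a solution of \eqref{3.80}, \eqref{3.81}, its trace $u_b = u|_{S_T}$ is thereby forced to equal \eqref{3.85}, uniqueness follows from uniqueness for the hyperbolic problem, and the continuity of $(f,u_0,u_1)\mapsto u$ from $U_2$ into $C^\infty(\overline Q)^N$ follows from the continuous-dependence statement combined with the continuity of the restriction map.

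The step I expect to be the main obstacle is the invocation of the hyperbolic theory at arbitrary order: unlike the parabolic case, hyperbolic problems enjoy no smoothing, so the regularity of $\check u$ must be extracted entirely from the regularity of the data together with the all-order compatibility verified above, and one must confirm that the requisite high-order energy (or Lions--Magenes transposition) estimates are available and that their constants can be controlled well enough to pass to the limit $m\to\infty$. Establishing convergence of the Taylor series in $C^\infty(\overline Q_1)^N$ --- rather than merely $C^\infty$ smoothness of $\check u$ --- is the crux, and it rests on these uniform continuous-dependence bounds.
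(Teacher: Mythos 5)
Your skeleton matches the paper's (extension to $Q_1$, homogeneous boundary data, all-order compatibility read off from the recurrence \eqref{3.83}, truncated Taylor sums plus continuous dependence), but the step you yourself flag as ``the main obstacle'' is exactly where your proposal stops short, and it is the step the paper actually has to work for. Among the results this paper relies on, there is no hyperbolic analogue of the Ladyzhenskaya--Solonnikov--Uraltseva/Solonnikov parabolic theorems that deliver a solution in $W_q^{2(k+1),k+1}$ for every $k$ once compatibility of order $k$ holds; in particular \cite{Sol.2}, which you cite, concerns parabolic systems only. What Lions--Magenes (Chapter 5, Theorem 2.1) provides is just the basic energy-level result: under \eqref{4.9m} a unique solution with $\check u \in L^2(0,T;H^2(\Omega_1))^N$ and $\di_t^2\check u \in L^2(Q_1)^N$, together with continuous dependence at that level. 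So ``invoking the analogue at arbitrary order'' is not available off the shelf; it is precisely the thing to be proved.

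The paper fills this gap with a bootstrap that your proposal is missing. Having the $H^{2,2}$ solution $\check u$, it forms the new hyperbolic problem \eqref{4.11p}, \eqref{4.12p} whose right-hand side is $\di_t^2 f$ and whose initial data are the recurrence coefficients $\di_t^2\check u(\cdot,0)$, $\di_t^3\check u(\cdot,0)$; by \eqref{3.82} and \eqref{3.83} these lie in $\mathcal D(\Omega_1)^N$, so the data are again admissible for the same basic theorem, and by uniqueness its solution coincides with $\di_t^2\check u$, whence $\di_t^4\check u \in L^2(Q_1)^N$ and $\di_t^2\di_{x_i}^2\check u \in L^2(Q_1)^N$, i.e.\ \eqref{4.13m}. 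The companion problems \eqref{4.11m}, \eqref{4.12m} for the second spatial derivatives, with data $\di_{x_j}^2 u_0$, $\di_{x_j}^2 u_1$, give $\di_{x_j}^4\check u \in L^2(Q_1)^N$. Together this yields $\check u \in H^{4,4}(Q_1)^N$, and iterating the device gives $\check u \in H^{2k,2k}(Q_1)^N$ for all $k$, hence $\check u \in C^\infty(\overline Q_1)^N$ and convergence of the Taylor series in $C^\infty(\overline Q_1)^N$; the uniform continuous-dependence bounds you were worried about never need to be produced at high order, because every stage reuses the one fixed energy estimate, merely applied to differentiated data. Without this bootstrapping idea (or a citation to a genuine all-order hyperbolic regularity theorem, which the paper does not use), your argument is incomplete at its crux.
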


\begin{proof}
We consider the problem: Find $\check u$ satisfying
\begin{align}
&\frac {\di^2\check {u}_i}{\di t^2} - B_i\bigg(x,t, \frac {\di}{\di x}\bigg)\check u = f_i \,\,\,\text{ in }\,\, Q_1,
\quad i = 1,2,\dots,N,                       \label{4.7a}\\
&\check{u}(x,0) = u_0(x), \quad \frac {\di \check{u}}{\di t} (x,0) = u_1(x), \quad x \in \overline{\Omega}_1,
\,\, \check {u}|_{S_{1T}}=0,
\label{4.7b}
\end{align}
where $(f, u_0, u_1) \in U_2$.

It follows from \cite{LiM.1},  Chapter 5, Theorem 2.1, that under the conditions
\begin{equation}\label{4.9m}
f \in H^{0,1}(Q_1)^N, \,\,  u_0     \in H^2(\Omega_1)^N\cap H^1_0(\Omega_1)^N, \,\, u_1 \in H^1(\Omega_1)^N,
\end{equation}
there exists a unique solution to the problem \eqref{4.7a}, \eqref{4.7b} such that
\begin{equation}\label{4.10m}
\check{u} \in L^2( 0, T; H^2(\Omega_1))^N, \,\, \frac{\di^2\check{ u} }{\di t^2} \in L^2(Q_1)^N,
\end{equation}
i.e. $\check{u} \in H^{2,2}(Q_1)^N $, and the function $ (f,\, u_0, \, u_1) \mapsto \check {u} $ is a continuous
mapping of  $ H^{0,1}(Q_1)^N  \times H^2(\Omega_1 )^N  \cap H^1_0(\Omega_1)^N  \times H^1 (\Omega_1 )^N $  into $H^{2,2}(Q_1)^N  $.

Informally, the solution to the problem  \eqref{4.7a}, \eqref{4.7b}  is represented in the form
\begin{equation}\label{4.8a}
\check{u}(x,t) = u_0(x) + u_1(x)t +\sum_{k=2}^\infty\,\frac 1{k!}\, \frac{\di^k \check{u}}{\di t^k}(x,0)\,t^k, \,\,
             (x,t) \in \overline{Q}_1.
\end{equation}
The function $\check u$  defined by \eqref{4.8a} and the formula \eqref{3.83} with $u$ replaced by  $\check {u}$
 is a solution to the  problem \eqref{4.7a}, \eqref{4.7b}
for all $t \in [0,T]$ such that the series \eqref{4.8a}  converges at $t$ in the corresponding space.

Taking that into account, we conclude by analogy with the above that the series  \eqref{4.8a} converges in $H^{2,2}(Q_1)^N  $.

Consider the problem: Find a function $\hat u=(\hat {u}_1,  \dots, \hat {u}_N ) $  given in $Q_1$  that solves the problem
\begin{align}
&\frac{\di^2\hat{u}}{\di t^2} - B\bigg(x,t, \frac {\di}{\di x}\bigg)\hat{u} =\frac{\di^2 f}{\di t^2} \,\,\,\text{ in } Q_1,
                                                         \label{4.11p}\\
&\hat{u}(x,0) = \frac{\di^2 \check u}{\di t^2}(x,0), \quad \frac {\di \hat{u}}{\di t} (x,0) = \frac{\di^3 \check u }{\di t^3}(x,0),
\quad x \in \Omega_1, \notag\\
& \hat{u}(x,t) = 0, \quad      (x,t) \in S_{1T},    \label{4.12p}
\end{align}
where $\frac{\di^2 \check u}{\di t^2}(x,0)$ and $\frac{\di^3 \check u }{\di t^3}(x,0) $ are determined by \eqref{3.83}.

Again, \eqref{3.82} and \cite{LiM.1}
imply that there exists  a unique solution to the problem  \eqref{4.11p},  \eqref{4.12p} such that
  $\hat{u} \in H^{2,2}(Q_1)^N.$
As $\hat{u} =\frac{\di^2 \check u}{\di t^2} $, by \eqref{4.8a} it is represented in the form
\begin{equation}
\hat{u}(x,t)=\sum_{k=2}^\infty\,\frac 1{(k-2)!}\, \frac{\di^k \check u}{\di t^k}(x,0)\,t^{k-2}, \quad
             (x,t) \in Q_1,
\end{equation}
and
\begin{equation}\label{4.13m}
\frac{\di^4 \check u}{\di t^4} \in L^2(Q_1)^N, \,\,\quad\,\,
\frac{\di^4 \check u}{\di t^2 \di x_i^2} \in L_2(Q_1)^N , \,\, i=1, \dots, N.
\end{equation}

Now consider  the problems: Find functions $\tilde {u}_j= (\tilde {u}_{j1},\dots, \tilde {u}_{jN} )$  given in $Q_1$ such that
\begin{align}
&\frac{\di^2\tilde{u}_{ji}}{\di t^2} - B_i\bigg(x,t, \frac {\di}{\di x}\bigg)\tilde{u}_j =\frac{\di^2 f_i}{\di x_j^2} \,\,\,\text{ in } Q_1,
\,\, j=1, \dots, n, \,\, i=1, \dots, N,
                                                         \label{4.11m}\\
&\tilde{u}_j(x,0) = \frac{\di^2 \check{u}}{\di x_j^2}(x,0)=\frac{\di^2 u_0}{\di x_j^2}(x), \quad \frac {\di \tilde{u}_j}{\di  t} (x,0) = \frac{\di^3 \check{u}}{\di t \di x_j^2}(x,0)=\frac{\di^2 u_1}{\di x_j^2}(x), \notag\\
&j=1, \dots, n,
\quad x \in \Omega_1, \quad
           \tilde{u}_j(x,t) = 0, \quad      (x,t) \in S_{1T}    \label{4.12m}.
\end{align}
The preceding arguments show the existence of a unique solution to this problem such that $ \tilde{u}_j \in H^{2,2}
(Q_1)^N $.
Since $ \tilde{u}_j =\frac{\di^2  \check u}{\di x_j^2}, \,\, j=1, \dots, n $, we obtain
$$\frac{\di^4 \check u}{\di x_j^4 } \in L^2(Q_1)^N, \quad j=1, \dots, n. $$
From here and \eqref{4.13m}, we get  $\check{u} \in H^{4,4}(Q_1)^N.$

By analogy, we obtain  that $\check{u} \in H^{2k,2k}(Q_1)^N$ for any $k \in \mathbb{N}$, and
 $\check{u} \in C^{\infty}(\overline{Q}_1)^N$, and the series \eqref{4.8a} converges to
 $\check{u}$  in $C^{\infty}(\overline{Q}_1)^N.$  The function $ (f, u_0, u_1) \mapsto \check{u}$ is a continuous
 mapping of $U_2$  into $C^{\infty}(\overline{Q}_1)^N$, and
 $u=\check{u}|_Q $.
 \end{proof}

\subsection{Problem with given boundary conditions}

We first consider  the following problem with homogeneous boundary conditions:
\begin{align}
&\frac{\di^2 u_i}{\di t^2} - B_i\bigg(x,t, \frac {\di}{\di x}\bigg)\,u = f_i
                       \,\,\,\text{ in }\,\, Q,\quad i = 1,2,\dots,N,   \notag \\
&u|_{t=0}= u_0, \quad {\frac {\di u}{\di t}}\Big|_{t=0} = u_1,  \notag\\
& u|_{S_T} = 0.  \label{3.177}
\end{align}

We suppose
\begin{equation}\label{3.188}
 f \in L^2(Q)^N, \  u_0 \in H_0^1(\Omega)^N,  \  u_1 \in L^2(\Omega)^N.
\end{equation}

\begin{theorem}
Let $\Omega$ be a bounded domain in ${\mathbb{R}}^n$ with a boundary $S$ of the class
$C^{\infty}$ and $T\in (0,\infty)$. Suppose that the conditions \eqref{2.47a}--\eqref{2.49a}
and \eqref{3.188}  are satisfied.
Then there exists a unique solution to the problem \eqref{3.177} and furthermore
\begin{gather}
(f,u_0,u_1) \to \bigg(u, \frac {\di u}{\di t}\bigg)  \text{ is a linear continuous mapping of  } \notag \\
(L^2(Q)^N\times H^1_0(\Omega)^N\times L^2(Q)^N )    \text{ into } L^2(0,T; H^1_0(\Omega)^N \times L^2(Q)^N   .
\label{4.af}
\end{gather}
Let $\{f^m,u_0^m,u_1^m\}_{m=1}^{\infty}$ be a sequence that satisfies the following conditions:
\begin{gather}
 f^m \in C^{\infty}([0,T];\mathcal{D}(\Omega))^N, \,\, f^m(x,t) = \sum_{k=0}^\infty\,\frac 1{k!}\,
     \frac{\di^k f^m}{\di t^k}\, (x,0) t^k,  \quad (x,t) \in Q, \notag \\
f^m\to f  \,\,\,\text{ in } L^2(Q)^N,
\quad u_0^m\in \mathcal{D} (\Omega)^N, \,\,
 u_0^m \to u_0 \,\,\,\text{ in }H^1_0(\Omega)^{N},           \,\,        \notag \\
 u_1^m\in \mathcal{D}(\Omega)^N, \quad u_1^m \to u_1\,\,\text{ in } L^2(\Omega)^N.  \label{3.199}
\end{gather}
Let also $u^m$ be the solution to the problem
\begin{align}
&\frac{\di^2 u_i^m}{\di t^2} - B_i\bigg(x,t,\frac {\di}{\di x}\bigg)\,u^m = f_i^m,  \,\,i=1,2, \dots, N,\notag\\
&u^m(x,0) = u_0^m(x) , \quad \frac {\di u^m}{\di t}(x,0) = u_1^{m}(x), \quad  x\in \Omega.   \label{3.200}
\end{align}
Then $u^m \in   C^{\infty}([0,T];\mathcal{D}(\Omega))^N  $ and $u^m \to u \,\,\text{ in }\,\, L^2([0,T];H^1_0(\Omega))^N$,
 \, $\frac {\di u^m}{\di t}   \to   \frac {\di u}{\di t}  \,\,\text{in} \,\,L^2(Q)^N ,$
  where $u$ is the solution to the problem \eqref{3.177}.
\end{theorem}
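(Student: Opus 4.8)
The plan is to follow the same three-part scheme used in the proofs of Theorems 2.2 and 3.1: first establish existence, uniqueness and continuous dependence for the weak problem \eqref{3.177} under the data assumptions \eqref{3.188}; then, for the smooth approximating data \eqref{3.199}, produce the regular solutions $u^m$ by appealing to Theorem 4.1; and finally pass to the limit $m\to\infty$ using the continuous dependence. The decomposition $u = \tilde u + w$ is not needed here since the boundary condition is already homogeneous.

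First I would recall the weak formulation of \eqref{3.177}: a function with $u \in L^2(0,T;H^1_0(\Omega))^N$, $\frac{\di u}{\di t}\in L^2(Q)^N$ and $\frac{\di^2 u}{\di t^2}\in L^2(0,T;H^{-1}(\Omega))^N$ is a weak solution if, for every $v\in H^1_0(\Omega)^N$ and a.e.\ $t$,
\begin{equation}\notag
\Big\langle \frac{\di^2 u}{\di t^2},v\Big\rangle + \int_\Omega a_{ijrm}\frac{\di u_j}{\di x_m}\frac{\di v_i}{\di x_r}\,dx + \int_\Omega\Big(b^i_{jm}\frac{\di u_j}{\di x_m}+g^i_j u_j\Big)v_i\,dx = \int_\Omega f_i v_i\,dx,
\end{equation}
together with $u(\cdot,0)=u_0$ and $\frac{\di u}{\di t}(\cdot,0)=u_1$. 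By the strong ellipticity condition for $a_{ijrm}$ in \eqref{2.47a}, the principal bilinear form satisfies a G\aa rding inequality. Existence and uniqueness of such a weak solution, with the regularity $u\in C([0,T];H^1_0(\Omega))^N$ and $\frac{\di u}{\di t}\in C([0,T];L^2(\Omega))^N$, is then the classical result for second-order hyperbolic systems, see \cite{LiM.1}, which also furnishes the energy estimate controlling $u$ and $\frac{\di u}{\di t}$ by the data. This estimate is precisely what yields the linear continuous mapping \eqref{4.af}, so the first two assertions of the theorem follow.

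Next I would treat the smooth approximations. Because $f^m(\cdot,t),u_0^m,u_1^m\in\mathcal{D}(\Omega)$, they may be extended by zero to $\Omega_1$, so that $(f^m,u_0^m,u_1^m)\in U_2$. Theorem 4.1 then provides a unique solution $\check u^m\in C^\infty(\overline Q_1)^N$ of the problem \eqref{4.7a}, \eqref{4.7b}, given by the Taylor expansion \eqref{4.8a} with coefficients defined by \eqref{3.83}. The key observation is that $B$ is a local differential operator, and multiplication by the smooth coefficients $a_{ijrm},b^i_{jm},g^i_j$ does not enlarge supports; hence the recurrence \eqref{3.83}, applied to the compactly supported initial data $u_0^m,u_1^m$ and right-hand side coefficients $\frac{\di^k f^m}{\di t^k}(\cdot,0)$, produces coefficients $\frac{\di^k u^m}{\di t^k}(\cdot,0)\in\mathcal{D}(\Omega)$ for all $k$. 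Consequently $u^m:=\check u^m|_Q\in C^\infty([0,T];\mathcal{D}(\Omega))^N$; in particular $u^m|_{S_T}=0$, so $u^m$ is exactly the smooth solution of \eqref{3.200}, which is \eqref{3.177} for the data $(f^m,u_0^m,u_1^m)$.

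Finally, the difference $u^m-u$ is a weak solution of \eqref{3.177} with data $(f^m-f,\,u_0^m-u_0,\,u_1^m-u_1)$. Applying the continuous dependence established above and invoking \eqref{3.199}, namely $f^m\to f$ in $L^2(Q)^N$, $u_0^m\to u_0$ in $H^1_0(\Omega)^N$ and $u_1^m\to u_1$ in $L^2(\Omega)^N$, one concludes $u^m\to u$ in $L^2(0,T;H^1_0(\Omega))^N$ and $\frac{\di u^m}{\di t}\to\frac{\di u}{\di t}$ in $L^2(Q)^N$. The step I expect to be the main obstacle is the energy estimate underlying \eqref{4.af}: testing the equation with $\frac{\di u}{\di t}$ and integrating by parts (the boundary terms vanish since $\frac{\di u}{\di t}\big|_{S_T}=0$) produces a term involving $\frac{\di a_{ijrm}}{\di t}$, which, because the principal coefficients are genuinely time-dependent, cannot be discarded and must be absorbed through a Gr\"onwall argument; this is where the smoothness of the coefficients assumed in \eqref{2.47a}--\eqref{2.49a} together with the strong ellipticity are essential. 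Everything else is a routine adaptation of the parabolic arguments of Theorems 2.2 and 3.1.
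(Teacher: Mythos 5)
Your proposal is correct and follows essentially the same route as the paper's proof: cite the classical existence/uniqueness/energy-estimate result for the weak hyperbolic problem (the paper uses \cite{Liopt}, Chapter 4, Theorem 1.1, rather than \cite{LiM.1}, but this is immaterial), extend the smooth data $(f^m,u_0^m,u_1^m)$ by zero so they lie in $U_2$ and invoke Theorem 4.1 to obtain $u^m$ in Taylor form, then conclude via continuous dependence and \eqref{3.199}. Your added details — the support-preservation argument showing $u^m\in C^{\infty}([0,T];\mathcal{D}(\Omega))^N$ and the remark on the Gr\"onwall absorption of the $\frac{\di a_{ijrm}}{\di t}$ term — are points the paper leaves implicit, not a different method.
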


\begin{proof} The existence of a unique solution to the problem \eqref{3.177} such that $u \in L^2([0,T];H^1_0(\Omega))^N, $
$ \frac {\di u}{\di t} \in L^2(Q)^N$  and \eqref{4.af} holds     follows from \cite{Liopt}, Chapter 4, Theorem 1.1.
 Informally, the solution to the problem
\eqref{3.200} is represented in the form
\begin{equation}\label{3.211}
u^m(x,t) = u_0^m(x) + u_1^m(x)t + \sum_{k=2}^\infty\,\frac 1{k!}\,
                         \frac{\di^k  u^m}{\di t^k}\, (x,0) t^k,  \quad (x,t) \in Q.
\end{equation}
Let $(\hat{f}^m, \hat{u}_0^m, \hat{u}_1^m)$ be an extension  of $(f^m, u_0^m, u_1^m)$ to $Q_1$ and  $\Omega_1$,
respectively, such that $(\hat{f}^m, \hat{u}_0^m, \hat{u}_1^m) \in U_2$. Then, by using Theorem 4.1, we obtain that
$u^m \in C^{\infty}(\overline{Q})^N$ and the series \eqref{3.211} converges in $C^{\infty}(\overline{Q})^N$.

Since the solution to the problem
\eqref{3.177} depends continuously on the data  of the problem, we obtain from
\eqref{3.199} that $u^m \to u$ in  $L^2([0,T];H^1_0(\Omega))^N$ and $\frac {\di u^m}{\di t}   \to
\frac {\di u}{\di t}$ in $L^2(Q)^N$.
\end{proof}

\medskip
Consider now the problem with inhomogeneous boundary conditions: Find $u$ satisfying
\begin{align}
&\frac{\di^2 u_i}{\di t^2} - B_i\bigg(x,t,\frac {\di}{\di x}\bigg)\,u = f_i
                   \,\,\text{ in }\,\,Q, \quad i = 1,2,\dots,N,  \notag\\
&u|_{t=0} = u_0,  \quad \frac{\di u}{\di t}\Big|_{t=0} = u_1,  \quad   u|_{S_T} = u_b.
 \label{3.222}
\end{align}
We suppose that
\begin{gather}
f \in L^2(Q)^N,  \quad u_b \in H^{\frac 32, \frac 32}(S_T)^N, \quad u_0 \in H^1(\Omega)^N,
                                      \notag\\
   u_1 \in L^2(\Omega)^N, \,\,\,u_0(x)=u_b(x,0),\quad x \in S.
\label{4f}
\end{gather}

We use the function $w$ defined in  \eqref{al}. Since  $u_b \in H^{\frac 32, \frac 32}(S_T)^N$, we have
$w \in H^{2,2}(\Omega)^N$.
We set
\begin{equation}  \label{4a}
\hat{u}= u-w.
\end{equation}
Then
\begin{align}
&\frac{\di^2 \hat{u}_i}{\di t^2} - B_i\bigg(x,t,\frac {\di}{\di x}\bigg)\,\hat{u} = \hat{f}_i
                   \,\,\text{ in }Q, \quad i = 1,2,\dots,N,  \notag\\
&\hat{u}|_{t=0} =\hat{u}_0= u_0-w]_{t=0},  \quad \frac{\di \hat{u}}{\di t}\Big|_{t=0} = \hat{u}_1=
u_1-\frac{\di w}{\di t}\Big|_{t=0},  \quad   \hat{u}|_{S_T} = 0,
 \label{3.22a}
\end{align}
where
\begin{equation}  \label{4b}
\hat{f}_i=f_i -\frac{\di^2 w_i}{\di t^2}+
B_i\bigg(x,t,\frac {\di}{\di x}\bigg)\,w.
\end{equation}
Then
$$\hat{f} \in L^2(Q)^N, \,\, \hat{u}_0 \in  H^1_0(\Omega)^N, \,\, \, \hat{u}_1 \in L^2(\Omega)^N.  $$

It follows from  Theorem 4.2 that, there exists a unique solution to the problem \eqref{3.22a}  such that
\begin{equation}
\hat{u} \in L^2(0,T; H^1_0(\Omega))^N,
\quad \frac{\di \hat{u}}{\di t} \in L^2(Q)^N,
\label{5e}
\end{equation}
and
\begin{gather}
(\hat{f}, \hat{u}_0, \hat{u}_1) \to \,\,(\hat{u},  \frac{\di \hat{u}}{\di t} )  \,\,\text{is a linear continuous mapping of}
\notag \\
L^2(Q)^N \times H^1_0(\Omega)^N, \times L^2(\Omega)^N \text{ into }  L^2(0,T; H^1_0(\Omega))^N \times L^2(Q)^N.
\label{6e}
\end{gather}

Let $(\hat{f}_m, \hat{u}_{0m}, \hat{u}_{1m}) $ be a sequence such that
\begin{gather}
\hat{f}_m \in C^{\infty}([0,T],\mathcal{D}(\Omega))^N, \,\,  \hat{f}_m (x,t)= \sum_{k=0}^\infty\,\frac 1{k!}\,
                         \frac{\di^k  \hat{f}_m}{\di t^k}\, (x,0) t^k, \,\, \hat{f}_m \to \hat{f} \,\,\text{in} \,\, L^2(Q)^N, \notag \\
\hat{u}_{0m} \in \mathcal{D}(\Omega)^N,  \,\, \hat{u}_{0m} \to \hat{u}_0 -w|_{t=0} \quad\text{in }
 H^1_0(\Omega)^N,  \notag \\
 \hat{u}_{1m} \in \mathcal{D}(\Omega)^N,    \,\,  \hat{u}_{1m}    \to u_1-\frac{\di w}{\di t}\Big|_{t=0} \quad \text {in } L^2(\Omega)^N. \label{5b}
\end{gather}

Consider the problem: Find $\hat u_m$ satisfying
\begin{align}
&\frac{\di^2  \hat u_{mi}}{\di t^2} - B_i\bigg(x,t,\frac {\di}{\di x}\bigg)\,\hat u_m = \hat f_{mi}
                   \,\,\text{ in }\,\,Q, \quad i = 1,2,\dots,N,  \notag\\
&\hat u_m |_{t=0} =\hat u_{0m},  \quad \frac{\di \hat u_m}{\di t}\Big|_{t=0} =\hat u_{1m}.
 \label{6b}
\end{align}

It follows from Theorem 4.2  that,
 there exists the unique solution to
 the problem \eqref{6b} such that
$\hat{u}_m \in C^{\infty}(\overline{Q})^N$. This solution  is presented in the form
\begin{equation}\label{7b}
\hat{u}_m (x,t)=\hat u_{0m}(x) +   \hat u_{1m}(x)t +  \sum_{k=2}^\infty\,\frac 1{k!}\,
                         \frac{\di^k  \hat{u}_m}{\di t^k}\, (x,0) t^k, \,\,\, (x,t) \in Q,
\end{equation}
and by \eqref{5b}
\begin{equation}
\hat{u}_m \to \hat{u}\,\, \text{in} \,\,
 L^2(0,T; H^1_0(\Omega))^N, \,\,\frac{\di \hat{u}_m}{\di t} \to \frac{\di \hat{u}}{\di t} \,\,\text{in} \,\, L^2(Q)^N.
 \label{8.b}
\end{equation}
Thus, we have proved the following result:
\begin{theorem}
Let $\Omega$ be a bounded domain in ${\mathbb{R}}^n$ with a boundary $S$ of the class
$C^{\infty}$ and $T\in (0,\infty)$. Suppose that the conditions \eqref{2.47a}--\eqref{2.49a}
and \eqref{4f}  are satisfied.
Then there exists the unique solution to the problem \eqref{3.222} such that $u \in L^2(0,T; H^1(\Omega))^N$,
$\frac{\di u}{\di t} \in L^2(Q)^N $. This solution is presented in the form $u=\hat{u} +w$, where $w$ is defined in
\eqref{al} and $\hat{u}$ is determined by  \eqref{6b}   and     \eqref{8.b}.
\end{theorem}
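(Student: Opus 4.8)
The plan is to reduce the inhomogeneous problem \eqref{3.222} to one with homogeneous Dirichlet data, for which Theorem 4.2 already supplies existence, uniqueness, and continuous dependence at the energy level, and then to realize the solution of that reduced problem constructively as a limit of the explicit Taylor-expansion solutions produced by Theorem 4.1. Thus the proof splits into a lifting step, a reduction-and-regularity step, and a density/limit step.

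First I would lift the boundary data by means of the bump-type extension $w$ defined in \eqref{al}. Here I would check that the hypothesis $u_b \in H^{\frac32,\frac32}(S_T)^N$ of \eqref{4f} is precisely what is needed to place $w \in H^{2,2}(Q)^N$: the half-derivative loss inherent in any trace/extension is compensated by the $\tfrac32$ regularity of $u_b$ in both the space and time directions, while the smooth cut-off in the normal variable $|x-Px|$ keeps two spatial derivatives and $\partial_t^2 w$ in $L^2$. Setting $\hat u = u - w$ as in \eqref{4a}, a direct substitution shows that $\hat u$ must solve the homogeneous problem \eqref{3.22a}, with right-hand side $\hat f$ given by \eqref{4b} and with initial data $\hat u_0 = u_0 - w|_{t=0}$ and $\hat u_1 = u_1 - \partial_t w|_{t=0}$.

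The key verification at this stage is the regularity of the reduced data, so as to match the hypotheses \eqref{3.188} of Theorem 4.2. Since $w \in H^{2,2}(Q)^N$, both $\partial_t^2 w$ and $Bw$ lie in $L^2(Q)^N$, whence $\hat f \in L^2(Q)^N$; likewise $\partial_t w|_{t=0} \in L^2(\Omega)^N$ gives $\hat u_1 \in L^2(\Omega)^N$. The decisive point is that $\hat u_0 \in H^1_0(\Omega)^N$, and this is exactly where the compatibility condition $u_0(x) = u_b(x,0)$ for $x \in S$ from \eqref{4f} enters: by construction $w(\cdot,0)$ agrees with $u_b(\cdot,0)$ on $S$, so $\hat u_0$ vanishes on $S$ and, being $H^1$, belongs to $H^1_0$. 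Theorem 4.2 then yields a unique $\hat u \in L^2(0,T;H^1_0(\Omega))^N$ with $\partial_t \hat u \in L^2(Q)^N$, together with the linear continuous dependence recorded in \eqref{6e}.

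Finally I would produce $\hat u$ as a limit of Taylor-expansion solutions. I would take the smooth approximating data $(\hat f_m,\hat u_{0m},\hat u_{1m})$ of \eqref{5b}, extend them by zero to $Q_1$ and $\Omega_1$ so that the extended triples lie in $U_2$, and apply Theorem 4.1 to obtain $\hat u_m \in C^\infty(\overline Q)^N$ given by the convergent series \eqref{7b}, with coefficients from the recurrence \eqref{3.83}. The continuous dependence \eqref{6e} then forces the convergence \eqref{8.b}, i.e.\ $\hat u_m \to \hat u$ in $L^2(0,T;H^1_0(\Omega))^N$ and $\partial_t \hat u_m \to \partial_t \hat u$ in $L^2(Q)^N$; adding back the lift gives $u = \hat u + w$ with the asserted regularity $u \in L^2(0,T;H^1(\Omega))^N$, $\partial_t u \in L^2(Q)^N$, and uniqueness is inherited from that of the homogeneous problem. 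I expect the main obstacle to be the lifting step: verifying that the explicit $w$ of \eqref{al} genuinely lies in $H^{2,2}(Q)^N$ while simultaneously matching $u_b$ on $S_T$ with the correct trace at $t=0$, since it is this single construction that makes $\hat f \in L^2$ and $\hat u_0 \in H^1_0$ hold at once. Everything downstream is then a routine combination of Theorems 4.1 and 4.2 with a density argument.
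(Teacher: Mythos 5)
Your proposal is correct and follows essentially the same route as the paper: lifting the boundary data with the bump-type extension $w$ of \eqref{al}, reducing to the homogeneous problem \eqref{3.22a} whose data regularity (in particular $\hat u_0 \in H^1_0(\Omega)^N$ via the compatibility condition in \eqref{4f}) matches Theorem 4.2, and then realizing $\hat u$ as the limit \eqref{8.b} of the Taylor-series solutions \eqref{7b} built from the approximating data \eqref{5b}. The only cosmetic difference is that you invoke Theorem 4.1 directly after a zero-extension to $Q_1$, whereas the paper routes this through Theorem 4.2, whose proof contains exactly that extension argument.
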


\section{Problem on vibration of an orthotropic plate in a viscous medium.}

Plates fabricated from composite materials are used in modern constructions. Such plates
are orthotropic. The strain energy of the orthotropic plate is defined by the following formula,
see \cite{Lit.6}
\begin{equation}
\Phi(u) = \frac 12 \int_\Omega\, \bigg(D_1\bigg(\frac{\di^2 u}{\di x_1^2}\bigg)^2
                  +2\,D_{12} \,\frac{\di^2 u}{\di x_1^2}\,\frac{\di^2 u}{\di x_2^2}
                + D_2\bigg(\frac{\di^2 u}{\di x_2^2} \bigg)^2
                  + 2\,D_3  \bigg( \frac{\di^2 u}{\di x_1\,\di x_2} \bigg)^2\bigg)\,dx. \label{4.1}
\end{equation}
Here $\Omega$ is the midplane of the plate, $\Omega$  is a bounded domain in ${\mathbb{R}}^2$
with a boundary $S$,
\begin{equation}\notag
          dx = dx_1\, dx_2, \,\,
D_i = \frac {h^3 E_i}{12(1 - \mu_1\mu_2)}, \,\, i = 1,2, \,\, D_{12} = \mu_2 D_1
                           = \mu_1D_2, \,\,   D_3 = \frac {h^3 G}6,
\end{equation}
$E_1$, $E_2$, $G$, $\mu_1$, $\mu_2$ being the elasticity characteristics of the material, $h$ the
thickness of the plate,
\begin{equation}\label{C4.1}
\text{$E_1$, $E_2$, $G$   are positive constants, $\mu_1$  and $\mu_2$  are constants,  $0 \le \mu_i < 1$, $i =1,2$,}
\end{equation}
$u$ is the function of deflection, i.e.,   the function of displacements of points of the
midplane in the direction perpendicular  to the midplane.

We suppose that
\begin{equation}\label{4.3}
h \in C^{\infty} (\overline{\Omega}), \quad  e_1 \le h \le e_2, \quad \text{$e_1$, $e_2$ are positive constants}.
\end{equation}

Variation of the strain energy of the plate determines the following bilinear form
\begin{align}
a(u,v) = &\int_\Omega\, \Big[D_1\,\frac{\di^2 u}{\di x_1^2}\,\frac{\di^2 v}{\di x_1^2}
                  + D_2\,\frac{\di^2 u}{\di x_2^2}\,\frac{\di^2 v}{\di x_2^2}
                  + D_{12}\bigg( \frac{\di^2 u}{\di x_1^2}\,\frac{\di^2 v}{\di x_2^2}
                  + \frac{\di^2 u}{\di x_2^2}\,\frac{\di^2 v}{\di x_1^2}\bigg)   \notag\\
                  & + 2 D_3 \frac{\di^2 u}{\di x_1\,\di x_2} \,\frac{\di^2 v}{\di x_1\,\di x_2}\Big]\,dx. \label{4.4}
\end{align}
In our case  $a(u,u) = 2 \Phi(u)$.

We assume that the plate is clamped. Thus,
\begin{equation}\label{4.5}
u\big |_S = 0, \quad \frac{\di u}{\di \nu} \Big|_S = 0,
\end{equation}
where $\nu$ is the unit outward normal to $S$.

One can easily see that, on the set of smooth functions which satisfy the condition
\eqref{4.5}, the following equality holds.
\begin{equation}\label{4.6}
a(u,v) = (A u,v) = (u, A v).
\end{equation}
Here $(\cdot,\cdot)$ is the scalar product in $L^2(\Omega)$, and the operator
$A$ given as follows:
\begin{align}
A u  = &\frac{\di^2}{\di x_1^2}\bigg(D_1\,\frac{\di^2 u}{\di x_1^2}\bigg)
          + \frac{\di^2}{\di x_2^2}\bigg(D_2\,\frac{\di^2 u}{\di x_2^2}\bigg)
          + \frac{\di^2}{\di x_2^2}\bigg(D_{12}\,\frac{\di^2 u}{\di x_1^2}\bigg) \notag\\
       &+\frac{\di^2}{\di x_1^2}\bigg(D_{12}\,\frac{\di^2 u}{\di x_2^2}\bigg)
           + 2\, \frac{\di^2}{\di x_1\,\di x_2}\bigg(D_3\,\frac{\di^2 u}{\di x_1\,\di x_2}
           \bigg)=   - F_{\mathrm{re}},     \label{4.7}
\end{align}
$F_{\mathrm{re}} = - A u$ is the resistance force induced by the  elasticity for the function of displacement
$u$.

The viscous medium resists the vibration of the plate. The resistance  force $F_{\mathrm{rm}}$
that it induces is opposite in direction to the velocity $\frac{\di u}{\di t}$,
$F_{\mathrm{rm}} = -\varphi\,\frac{\di u}{\di t}$, where $\varphi$ is the resistance coefficient which
is an increasing function of $|\frac{\di u}{\di t}|$ that takes positive values.

We take the resistance force in the form
\begin{equation}\label{4.8}
F_{\mathrm{rm}} = - \bigg(a_0 + a_1\bigg(\frac{\di u}{\di t}\bigg)^2\bigg)\,\frac{\di u}{\di t},
\end{equation}
where $a_0$ and $a_1$ are positive constants.

The D'Alembert inertia force is given by
\begin{equation}\label{4.9}
F_{\mathrm{in}} = -  \rho h \,\frac{\di^2 u}{\di t^2},
\end{equation}
$\rho$ being the density,  a positive constant.

Let $K$ be an exterior transverse force that acts on the plate. According to the D'Alembert
principle, the sum of an active force that is applied at any point at each instant of time and
the internal and inertia forces  which it induces is equal to zero. Therefore,
\begin{equation}\label{4.10}
F_{\mathrm{re}} + F_{\mathrm{rm}} + F_{\mathrm{in}} + K = 0.
\end{equation}
From here, we obtain the following equation on vibration of the orthotropic  plate in a viscous
medium:
\begin{equation}\label{4.11}
 \rho h \,\frac{\di^2 u}{\di t^2} + A u + \bigg(a_0 + a_1\bigg(\frac{\di u}{\di t}\bigg)^2\bigg)\frac{\di u}{\di t} = K.
\end{equation}
Dividing  both sides of equation \eqref{4.11} by $\rho h$ gives
\begin{equation}\label{4.12}
\frac{\di^2 u}{\di t^2} + M u + \alpha_0 \, \frac{\di u}{\di t}  + \alpha_1\bigg(\frac{\di u}{\di t}\bigg)^2
           \frac{\di u}{\di t} = f,
\end{equation}
where
\begin{equation}\notag
M u = \frac 1{\rho h} \, A u,  \quad \alpha_0  = \frac{a_0}{\rho h}, \quad  \alpha_1 = \frac{a_1}{\rho h},
             \quad  f = \frac{K}{\rho h}.
\end{equation}

  According to      \eqref{4.5},   the boundary conditions have the form
  \begin{equation}\label{4.11a}
  u\big|_{S_T}=0, \quad \frac{\di u}{\di \nu}\Big|_{S_T}=0.
   \end{equation}
     We set the initial conditions in the form
\begin{equation}\label{4.13}
 u\big|_{t=0} = u _0, \quad   \frac{\di u}{\di t} \Big|_{t=0} = u_1.
 \end{equation}
We suppose
\begin{align}
&f \in  L^2 (Q), \quad \frac{\di f}{\di t} \in  L^2 (Q),  \,\, \text{i.e,}\,\, f \in H^{0,1}(Q), \label{4.14}\\
&u_0 \in  H_0^4(\Omega), \quad  u_1 \in  H_0^2(\Omega).\label{4.15}
\end{align}

\begin{theorem}
Let $\Omega$ be a bounded domain in ${\mathbb{R}}^2$ with a boundary $S$ of the class
$C^5$,  $T\in (0,\infty)$. Suppose that the conditions \eqref{C4.1}, \eqref{4.3},  \eqref{4.14},
\eqref{4.15}  are satisfied. Then there exists a unique
solution $u$ to the problem  \eqref{4.12},   \eqref{4.11a}, \eqref{4.13} such that $u \in W$, where
\begin{gather}
W = \Big\{v\mid v\in  L^\infty(0,T;H^4(\Omega)\cap H_0^2(\Omega)), \,\,
\frac{\di v}{\di t} \in  L^\infty(0,T;H_0^2(\Omega)), \notag\\
\frac{\di^2 v}{\di t^2}  \in  L^\infty(0,T;L^2(\Omega))\Big\},
\end{gather}                                                                                                     \label{5.17a}\\
and
\begin{gather}
(f, u_0, u_1) \mapsto{u} \,\, \text{is a continuous mapping of }\,\,  H^{0,1}(Q) \times H^4_0(\Omega)
\times  H^2_0(\Omega)  \,\, \text{into} \,\,W.\label{5.18a}
\end{gather}
Let $\{f_m,u_{0m},u_{1m},h_m\}$ be a sequence such that
\begin{align}
&f_m \in C^{\infty}([0,T];\mathcal{D}(\Omega)), \,\, f_m(x,t) = \sum_{k=0}^\infty\,\frac 1{k!}\,
  \frac{\di^k f_m}{\di t^k}\, (x,0) t^k,  \quad (x,t) \in \overline{Q}, \notag \\
&f_m\to f  \,\,\,\text{ in }\,\, H^{0,1}(Q),\quad u_{0m}\in \mathcal{D}(\Omega) \quad
     u_{0m} \to u_0 \,\,\,\text{ in }\,\, H^4_0(\Omega) \notag\\
&u_{1m} \in   \mathcal{D} (\Omega), \quad  u_{1m} \to u_1 \,\,\text{ in }\,\, H_0^2(\Omega), \notag\\
&h_m \in C^{\infty} (\overline{\Omega}), \quad e_1 \le h_m \le e_2, \quad
      h_m \to h \,\,\text{ in }\,\,C^3(\overline{\Omega}).     \label{4.188}
\end{align}
Let $u_m$ be the solution to the problem
\begin{align}
&\frac{\di^2 u_m}{\di t^2} + M_m u_m + \alpha_{0m}\frac {\di u_m}{\di t}
        + \alpha_{1m}\bigg(\frac {\di u_m}{\di t}\bigg)^2 \,\frac {\di u_m}{\di t} =f_m,  \notag\\
&u_m|_{S_T} = 0,  \quad     \frac {\di u_m}{\di \nu} \Big|_{S_T} = 0,  \label{4.199}
\end{align}
where $M_m = \frac 1{\rho h_m}A_m,\, \alpha_{0m}=\frac{a_0}{\rho h_m}, \,   \alpha_{1m}=\frac{a_1}{\rho h_m}
$, $A_m$  is defined by \eqref{4.7},
where $h$ is replaced by $h_m$.
Then
\begin{align}
& u_m \to u \,\,\text{ in }\,\, L^\infty(0,T;H^4(\Omega)\cap H_0^2(\Omega)), \notag\\
&\frac {\di u_m}{\di t}  \to \frac {\di u}{\di t} \,\,\text{ in }\,\, L^\infty(0,T;H_0^2(\Omega)), \notag\\
&\frac{\di^2 u_m}{\di t^2}  \to \frac{\di^2 u}{\di t^2} \,\,\text{ in }\,\, L^\infty(0,T;L^2(\Omega)).
                                                                          \label{4.200}
\end{align}
\end{theorem}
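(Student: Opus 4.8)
The plan is to follow the three–stage scheme used throughout the paper: first establish the existence, uniqueness and continuous dependence of the solution $u\in W$ by a variational (Galerkin) argument; then realize each approximating problem \eqref{4.199} through the Taylor expansion and verify its convergence in $C^{\infty}(\overline{Q})$; and finally pass to the limit $m\to\infty$ using the continuous dependence. For the first stage I would note that, by \eqref{C4.1} and \eqref{4.3}, the operator $A$ of \eqref{4.7} is fourth–order elliptic and its form \eqref{4.4} is coercive on $H^2_0(\Omega)$: since $D_{12}^2=\mu_1\mu_2 D_1D_2<D_1D_2$ and $D_3>0$, one has $a(v,v)\ge c\|v\|_{H^2_0(\Omega)}^2$. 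I would build a Galerkin scheme on the eigenfunctions of $A$ with the clamped conditions \eqref{4.5}; these lie in $C^{\infty}(\overline{\Omega})$ and are orthogonal in $L^2(\Omega)$ and in $H^2_0(\Omega)$. Testing \eqref{4.12} with $\frac{\di u}{\di t}$ gives the basic energy identity, in which the cubic damping contributes the nonnegative term $\alpha_1\int_\Omega(\frac{\di u}{\di t})^4\,dx$ and the linear damping contributes $\alpha_0\|\frac{\di u}{\di t}\|_{L^2(\Omega)}^2$; with Gronwall this yields uniform bounds for $u$ in $L^\infty(0,T;H^2_0(\Omega))$ and for $\frac{\di u}{\di t}$ in $L^\infty(0,T;L^2(\Omega))\cap L^4(Q)$.

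To reach the space $W$ I would differentiate the equation in $t$ and test with $\frac{\di^2 u}{\di t^2}$. The differentiated cubic term then produces $3\alpha_1\int_\Omega(\frac{\di u}{\di t})^2(\frac{\di^2 u}{\di t^2})^2\,dx\ge 0$, again dissipative, while the hypotheses $f\in H^{0,1}(Q)$, $u_0\in H^4_0(\Omega)$, $u_1\in H^2_0(\Omega)$ supply exactly the data needed to bound the higher initial energy. This gives $\frac{\di u}{\di t}\in L^\infty(0,T;H^2_0(\Omega))$ and $\frac{\di^2 u}{\di t^2}\in L^\infty(0,T;L^2(\Omega))$; inverting the elliptic operator $M$ in \eqref{4.12} and using its $C^\infty$ coefficients then gives $u\in L^\infty(0,T;H^4(\Omega)\cap H^2_0(\Omega))$, i.e.\ $u\in W$. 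Passage to the limit in the Galerkin approximation uses Aubin--Lions compactness for the linear terms and the almost-everywhere convergence of $\frac{\di u}{\di t}$ for the continuous nonlinearity. Uniqueness and the continuous dependence \eqref{5.18a} I would obtain by subtracting two solutions, testing with the time derivative of their difference, and exploiting the monotonicity of $s\mapsto(a_0+a_1s^2)s$, so the nonlinear contribution has a favorable sign and Gronwall's inequality closes the estimate; the same differences estimate, read off in terms of the data, yields \eqref{5.18a} in the $L^\infty$-in-time norms.

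For the smooth data \eqref{4.188} I would treat the approximating problem \eqref{4.199} as in the proof of Theorem 4.1, adapted to the nonlinear lower-order term and the variable coefficient $h_m$. Because $u_{0m},u_{1m}\in\mathcal D(\Omega)$ and $f_m\in C^{\infty}([0,T];\mathcal D(\Omega))$, all data vanish near $S$, so both clamped conditions in \eqref{4.11a} and all their time derivatives hold on $S_T$, and the compatibility condition of order $k=\infty$ is satisfied. Differentiating \eqref{4.199} in $t$ and setting $t=0$ defines the coefficients $\frac{\di^k u_m}{\di t^k}(\cdot,0)$ by a recurrence of the type \eqref{3.83}, in which the cubic term contributes, after Leibniz differentiation, a polynomial in the already-computed lower-order coefficients, each of which belongs to $\mathcal D(\Omega)$. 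Extending the data to $\Omega_1$ as in Theorem 4.1 and invoking the regularity theory for the resulting smooth, compatible problem shows $u_m\in C^{\infty}(\overline{Q})$ and that the Taylor series
\begin{equation}\notag
u_m(x,t)=u_{0m}(x)+u_{1m}(x)\,t+\sum_{k=2}^{\infty}\frac{1}{k!}\,\frac{\di^k u_m}{\di t^k}(x,0)\,t^k
\end{equation}
converges to it in $C^{\infty}(\overline{Q})$, matching the partial sums against truncations of $f_m$ exactly as in \eqref{4.8a}.

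Finally I would let $m\to\infty$. The convergences in \eqref{4.188}, together with $h_m\to h$ in $C^3(\overline{\Omega})$ and $e_1\le h_m\le e_2$ (so that $M_m\to M$, $\alpha_{0m}\to\alpha_0$, $\alpha_{1m}\to\alpha_1$ with control in the relevant norms), make the data of \eqref{4.199} converge to those of \eqref{4.12}, \eqref{4.11a}, \eqref{4.13}. The a priori estimates of the first stage are uniform in $m$, so $\{u_m\}$ is bounded in $W$, and the continuous dependence established above upgrades weak to the strong convergences \eqref{4.200}, the limit being the unique solution $u$. I expect the main obstacle to be the higher-order a priori estimate that lands the solution in $W$: controlling the time-differentiated cubic damping term together with the commutators generated by the variable thickness $h_m$ in $M_m$, while keeping all bounds uniform in $m$ so that both the convergence of the Taylor series and the passage $u_m\to u$ go through. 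A secondary delicate point is the rigorous justification of the nonlinear recurrence and of the convergence of the Taylor series, as opposed to its purely formal construction, which is handled, as above, through the existence of the smooth solution and the continuous dependence.
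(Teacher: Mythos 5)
Your three\-/stage scheme coincides with the paper's proof. For stage (i) — existence, uniqueness and the continuous dependence \eqref{5.18a} — the paper simply cites Lions--Magenes (Ch.~5, Thm.~2.1) and Lions (Ch.~1, Thm.~3.1) ``with a small modification'', pointing to the coercivity $c\|u\|^2_{H_0^2(\Omega)}\ge a(u,u)\ge c_1\|u\|^2_{H_0^2(\Omega)}$, Faedo--Galerkin approximations, and the compactness theorem for the nonlinear term; your explicit energy estimates (testing with $\frac{\di u}{\di t}$, then differentiating in $t$ and testing with $\frac{\di^2 u}{\di t^2}$, the sign of the differentiated cubic damping, elliptic regularity for the $H^4(\Omega)$ bound, monotonicity of $s\mapsto(a_0+a_1s^2)s$ for uniqueness) are exactly the content of those citations, so this stage matches. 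For stage (iii), the limit $m\to\infty$, you argue as the paper does via continuous dependence, and in fact you do slightly better: you note that the varying thickness $h_m$ must be absorbed into the data (so that $M_m\to M$, $\alpha_{0m}\to\alpha_0$, $\alpha_{1m}\to\alpha_1$), a point the paper passes over in one sentence.

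The genuine divergence — and the one real gap — is stage (ii), the convergence of the Taylor series for $u_m$. You propose to run the proof of Theorem 4.1: extend the data to $\Omega_1$ and ``invoke the regularity theory'' for the smooth compatible problem to get $u_m\in C^\infty(\overline Q)$ and convergence of the series in $C^\infty(\overline Q)$. But Theorem 4.1's bootstrap is intrinsically linear: its auxiliary problems for $\frac{\di^2\check u}{\di t^2}$ and $\frac{\di^2\check u}{\di x_j^2}$ are problems of the \emph{same linear form}, and that is precisely what fails here. Differentiating \eqref{4.199} in $t$ yields a linearized equation for $\frac{\di u_m}{\di t}$ whose damping coefficient $\alpha_{0m}+3\alpha_{1m}\big(\frac{\di u_m}{\di t}\big)^2$ depends on the unknown solution, and differentiating in $x_j$ creates product source terms; there is no off\-/the\-/shelf nonlinear regularity theorem to invoke, so this step would require a genuine induction that you acknowledge but do not supply. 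The paper takes a different route designed for exactly this situation: following the proof of Theorem 2.3, it forms the partial sums $u_{me}$ of \eqref{4.211}, observes that they solve problems whose right\-/hand sides are truncations converging to $f_m$ (cf.\ \eqref{2.62}), and applies the implicit function theorem (the ``infinite function theorem'' in the text is a typo) — the linearization being a damped linear plate equation, hence invertible — to conclude $u_{me}\to u_m$ in $W$ only, which is all that \eqref{4.200} needs. In short: stages (i) and (iii) of your proposal are the paper's argument; in stage (ii) you aim at a stronger conclusion ($C^\infty(\overline Q)$ rather than $W$) by a method that does not transfer from the linear case without substantial additional work, whereas the paper's implicit\-/function\-/theorem argument handles the cubic damping directly.
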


\begin{proof}
The existence of a unique solution $u$ to the problem \eqref{4.12}, \eqref{4.11a}, \eqref{4.13} such that $u\in W$
 and  \eqref{5.18a} holds     is proved by a small modification of the proofs of
 Theorem 2.1, Chapter 5 in  \cite{LiM.1} or Theorem 3.1, Chapter 1 in \cite{Lions}. In this case, we take into account that
\begin{equation}\notag
c\|u\|_{H_0^2(\Omega)}^2 \ge a(u,u) \ge c_1\|u\|_{H_0^2(\Omega)}^2, \,\,\, u \in H^2_0(\Omega),
\end{equation}
use the Faedo--Galerkin approximations, and the theorem on compactness, see Theorem 5.1,
Chapter 1 in \cite{Lions}, is applied to pass to the limit in the nonlinear term of  \eqref{4.12}.

Informally, the solution to the problem \eqref{4.199} is represented in the form
\begin{equation}\label{4.211}
u_m(x,t) = u_{0m}(x) + u_{1m}(x)t + \sum_{k=2}^\infty\,\frac 1{k!}\,
     \frac{\di^k u_m}{\di t^k}\, (x,0) t^k,  \quad (x,t) \in \overline{Q},
\end{equation}
where $\frac{\di^k u_m}{\di t^k}\, (x,0)$ are determined by the following recurrence relations
\begin{align}
&\frac{\di^k u_m}{\di t^k}\, (\cdot,0) = \frac{\di^{k-2} f_m}{\di t^{k-2}}\, (\cdot,0)
             - \frac{\di^{k-2} M_m u_m}{\di t^{k-2}}\, (\cdot,0)
             - \alpha_0  \frac{\di^{k-1} u_m}{\di t^{k-1}}\, (\cdot,0)  \notag\\
&           - \alpha_1 \sum_{j=0}^{k-2} \,C_{k-2}^j \Big[\frac{\di^j}{\di t^j}\bigg(\frac{\di u_m}
                 {\di t}\bigg)^2\Big](\cdot,0)\frac{\di^{k-j-1}u_m}{\di t^{k-j-1}}(\cdot,0), \quad k = 2,3,\dots      \notag
\end{align}

The convergence of the series \eqref{4.211}  is proved by analogy with the proof
 of Theorem 2.3. In this case, we consider the functions
 \begin{equation}
u_{me}(x,t) = u_{0m}(x) + u_{1m}(x)t + \sum_{k=2}^e\,\frac 1{k!}\,
     \frac{\di^k u_m}{\di t^k}\, (x,0) t^k,  \quad (x,t) \in \overline{Q}
\end{equation}
  and apply the infinite function theorem. Then we obtain that $ u_{me} \to u_m$ in $W$ as
 $   e \to \infty$.

 Since the solution to the problem    \eqref{4.12},   \eqref{4.11a}, \eqref{4.13}  depends
 continuously on the data of the problem,   \eqref{4.200} follows from   \eqref{4.188}.
\end{proof}

\section{Maxwell's equations.}
\subsection{General problem.}

We consider the following  problem of electromagnetism: Find functions $D$ and $B$
such that, see \cite{Du.}, \cite{Mau.}
\begin{align}
&\frac{\di D}{\di t} - \curl(\hat{\mu}B) + \sigma \hat{\xi} D = G_1 \quad\text{in } Q,
   \label{6.1} \\
&\frac{\di B}{\di t} + \curl(\hat{\xi} D) = G_2 \quad\text{in } Q, \label{6.2}  \\
 &\nu\wedge D  = 0\quad\text{on } S_T, \label{6.3}  \\
 &D\Big|_{t=0} = D_0, \quad B\Big|_{t=0} = B_0 \quad\text{in } \Omega.
      \label{6.4}
\end{align}
Here $Q= \Omega\times (0,T)$, $T < \infty$, $\Omega$ is a bounded domain in $\mathbb{R}^3$ with a
boundary $S$, $S_T = S \times (0,T)$, $D$ is the electric induction, $B$  is the magnetic
induction, $\hat{\mu}$, $\hat{\xi}$, and $\sigma$  are scalar functions of $x\in \Omega$ that take positive
values, $\nu$ is the unit outward normal to $S$.

We define the following spaces
\begin{align}
&V = \{v\mid v \in L^2(\Omega)^3, \,\,\curl v \in L^2(\Omega)^3\},  \notag \\
&V_1 = \{v\mid v \in V, \,\, v\wedge \nu = 0\}.      \label{6.5}
\end{align}
The space $V_1$ is the closure of $\mathcal{D}(\Omega)^3$ with respect to the norm of $V$,
\begin{equation}\label{6.6}
\|v\|_V  = \bigg(\|v\|_{L^2(\Omega)^3}^2 + \|\curl v\|_{L^2(\Omega)^3}^2\bigg)^{\frac  12}.
\end{equation}
For further detail about the spaces $V$ and$V_1$, see \cite{Gir.}, Chapter 1, Sections 2,3, and \cite{Du.}, Chapter~7.

Let also
\begin{align}
&X = \left\{h\mid h \in L^\infty(0,T;V), \,\, \frac{\di h}{\di t} \in L^\infty(0,T;L^2(\Omega)^3)\right\},  \notag \\
&X_1 = \left\{h\mid h \in L^\infty(0,T;V_1), \,\, \frac{\di h}{\di t} \in L^\infty(0,T;L^2(\Omega)^3)\right\}.     \notag
\end{align}
The norm in $X$ and $X_1$ is defined by
\begin{equation}\notag
\|h\|_X  = \|h\|_{L^\infty(0,T;V)} + \left\|\frac{\di h}{\di t}\right\|_{ L^\infty(0,T;L^2(\Omega)^3)}.
\end{equation}
We suppose
\begin{gather}
G_1 \in H^{0,1}(Q)^3, \quad G_2 \in H^{0,1}(Q)^3, \quad D_0 \in V_1, \,\,B_0 \in V, \label{6.6b}\\
\hat{\mu} \in C^1(\overline{\Omega}), \quad \mu_1\ge \hat{\mu}\ge \mu_2, \quad
\hat{\xi} \in C^1 (\overline{\Omega}), \quad \xi_1\ge \hat{\xi} \ge \xi_2,  \label{6.6c}\\
\sigma \in L^\infty(\Omega), \quad \tilde{\sigma}_1 \ge \sigma \ge \tilde{\sigma}_2. \label{6.6d}
\end{gather}
Here $\mu_1$, $\mu_2$, $\xi_1$, $\xi_2$, $\tilde{\sigma}_1$, $\tilde{\sigma}_2$ are positive constants.

\begin{theorem}
Let $\Omega$ be a bounded domain in $\mathbb{R}^3$ with a boundary $S$ of the class $C^\infty$.
Suppose that the conditions \eqref{6.6b}--\eqref{6.6d} are satisfied. Then, there exists a unique solution
to the problem \eqref{6.1}--\eqref{6.4} such that.
\begin{equation}\label{6.7}
D \in X_1, \quad B \in X.
\end{equation}
\end{theorem}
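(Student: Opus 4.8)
The plan is to treat \eqref{6.1}--\eqref{6.4} as a first-order linear evolution system for the pair $(D,B)$ and to establish existence, uniqueness and the regularity \eqref{6.7} by the energy method. Everything rests on a skew-symmetry of the two $\curl$-terms combined with the boundary condition \eqref{6.3}. By the Green formula for the curl, for $u,v$ with $\curl u,\curl v\in L^2(\Omega)^3$ one has
\begin{equation*}
(\curl u,v)-(u,\curl v)=\int_S (u\wedge\nu)\cdot v\,dS,
\end{equation*}
where $(\cdot,\cdot)$ is the $L^2(\Omega)^3$ scalar product and the boundary term is understood in the duality of the tangential trace spaces of $V$ (see \cite{Gir.}, \cite{Du.}). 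Taking $u=\hat\xi D$, $v=\hat\mu B$ and using $D\wedge\nu=0$ (which holds since $D(\cdot,t)\in V_1$) makes the boundary term vanish, because $(\hat\xi D)\wedge\nu=\hat\xi\,(D\wedge\nu)=0$; hence $(\curl(\hat\xi D),\hat\mu B)=(\curl(\hat\mu B),\hat\xi D)$. First I would test \eqref{6.1} with $\hat\xi D$ and \eqref{6.2} with $\hat\mu B$ and add. Since $\hat\mu,\hat\xi$ are $t$-independent, the two time terms combine into $\tfrac12\tfrac{d}{dt}\big[(\hat\xi D,D)+(\hat\mu B,B)\big]$, the two curl terms cancel by the identity above, and the dissipative term survives, giving
\begin{equation*}
\tfrac12\tfrac{d}{dt}\big[(\hat\xi D,D)+(\hat\mu B,B)\big]+(\sigma\hat\xi^2 D,D)=(G_1,\hat\xi D)+(G_2,\hat\mu B).
\end{equation*}
With Cauchy--Schwarz, Gronwall and \eqref{6.6c}--\eqref{6.6d} this bounds $D,B$ in $L^\infty(0,T;L^2(\Omega)^3)$ in terms of the data.

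To reach \eqref{6.7} I would differentiate the system in $t$. Since the coefficients do not depend on $t$, the pair $(\partial_t D,\partial_t B)$ solves the same system with data $\partial_t G_1,\partial_t G_2$ and still satisfies $(\partial_t D)\wedge\nu=0$, so the identical estimate controls $\partial_t D,\partial_t B$ in $L^\infty(0,T;L^2(\Omega)^3)$ once the initial values lie in $L^2$. These are read off the equations at $t=0$: $\partial_t D(0)=\curl(\hat\mu B_0)-\sigma\hat\xi D_0+G_1(0)$ and $\partial_t B(0)=G_2(0)-\curl(\hat\xi D_0)$, both in $L^2(\Omega)^3$ because $B_0,D_0\in V$, the coefficients are $C^1$, and $G_1,G_2\in H^{0,1}(Q)^3\hookrightarrow C([0,T];L^2(\Omega)^3)$. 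With $\partial_t D,\partial_t B\in L^\infty(0,T;L^2)$ in hand, the equations themselves give $\curl(\hat\xi D)=G_2-\partial_t B$ and $\curl(\hat\mu B)=\partial_t D+\sigma\hat\xi D-G_1$ in $L^\infty(0,T;L^2)$; dividing by $\hat\xi$, resp.\ $\hat\mu$ (bounded below) and moving the lower-order terms $\nabla\hat\xi\wedge D$, $\nabla\hat\mu\wedge B$ to the right yields $\curl D,\curl B\in L^\infty(0,T;L^2)$. Thus $D\in L^\infty(0,T;V)$ with $D\wedge\nu=0$, i.e.\ $D\in X_1$, and $B\in X$.

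For existence I would run the Faedo--Galerkin scheme with a basis of $V_1$ for the $D$-component and a basis of $V$ for the $B$-component, so that the approximations $D_m$ satisfy the boundary condition exactly; the two energy estimates above, derived at the finite-dimensional level (the second by differentiating the Galerkin system in $t$), are uniform in $m$, and, the system being linear, the weak-$*$ limits pass directly through every term. Uniqueness follows from the basic energy identity applied to the difference of two solutions: with $G_1=G_2=0$ and $D_0=B_0=0$ the nonnegative energy starts at zero and has nonpositive derivative, forcing $D=B=0$. Alternatively, one may write the problem as $U'+\mathcal A U=F$ and check that $-\mathcal A$, with domain encoding $D\in V_1$ and $\curl(\hat\mu B)\in L^2$, generates a contraction semigroup in the weighted space with inner product $(\hat\xi\,\cdot,\cdot)+(\hat\mu\,\cdot,\cdot)$; the accretivity computation is exactly the boundary cancellation above, and since the data place $(D_0,B_0)$ in the domain, Hille--Yosida delivers the same regularity.

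The main obstacle is making the integration by parts rigorous in the spaces $V$ and $V_1$: the pairing $\int_S(u\wedge\nu)\cdot v\,dS$ exists only as a duality between the tangential trace spaces $H^{-1/2}(\mathrm{div}_S,S)$ and $H^{-1/2}(\mathrm{curl}_S,S)$, and one must invoke the trace theory of \cite{Gir.}, \cite{Du.} to know that $D\in V_1$ genuinely annihilates it. Choosing the Galerkin basis inside $V_1$, where the tangential trace is literally zero, sidesteps this in the approximation, and the cancellation then persists in the limit; the small compatibility facts placing $(D_0,B_0)$ and $(\partial_t D(0),\partial_t B(0))$ in $L^2$ are routine given \eqref{6.6b}--\eqref{6.6c}.
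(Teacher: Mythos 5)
Your proposal is correct, and it takes essentially the same route as the paper: the paper's entire proof is a reference to \cite{Du.}, Chapter 7, where the result is obtained precisely by Faedo--Galerkin approximations together with the weighted energy estimate and its time-differentiated version, which is what you reconstruct in detail. Your skew-symmetry cancellation of the curl terms, the recovery of the $\curl$ bounds from the equations, and the handling of the $V_1$ trace issue all match the standard argument behind the cited theorem, so there is nothing to correct.
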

Theorem 6.1 is proved in \cite{Du.}, Chapter 7, by using Galerkin approximations.

Let us discuss construction of the solution to the problem \eqref{6.1}--\eqref{6.4}. In order to apply our method
to this problem, we should somewhat change the formulation of this problem.

We present the function $B$ in the form
\begin{equation}\label{6.8a}
B = B^1 + B^2, \quad B^1 \in X_1, \quad B^2 \in X, \quad \frac{\di^2 B^2}{\di t^2} \in L^2(Q)^3.
\end{equation}
We consider  that $B^1$ is unknown, while $B^2$ is  given and satisfies the condition
\begin{equation}\label{6.8b}
B^2\wedge \nu = B\wedge \nu \quad \text{in  } \,\,  L^\infty(0,T;H^{-\frac12}(S)^3).
\end{equation}
Here $B$ is the solution  to the problem \eqref{6.1}--\eqref{6.4} together with $D$. Equality \eqref{6.8b}
has sense for elements of $X$, see  \cite{Du.}, Lemma 4.2, Chapter 7.

According to \eqref{6.8a}, \eqref{6.8b}, we set
\begin{align}
&B_0 = B_0^1 + B_0^2, \quad B_0^1 \in V_1, \quad B_0^2 \in V, \notag\\
&B^1|_{t=0} = B_0^1,\quad B^2|_{t=0} = B_0^2.  \label{6.8c}
\end{align}
Now the problem \eqref{6.1}--\eqref{6.4} is represented as follows:
\begin{align}
&\frac{\di D}{\di t} - \curl(\hat{\mu}B^1) + \sigma \hat{\xi} D  = G_1 + \curl(\hat{\mu}B^2)\quad \text{in } Q,     \label{6.10f} \\
&\frac{\di B^1}{\di t} + \curl(\hat{\xi} D) = G_2 - \frac{\di B^2}{\di t}\quad \text{in }Q,      \label{6.11f}  \\
&\nu\wedge D  = 0\quad\text{on } S_T, \,\,\nu\wedge B^1  = 0\quad\text{on }S_T,
                                     \label{6.12f}  \\
&D\Big|_{t=0} = D_0, \quad B^1\Big|_{t=0} = B_0^1 \quad\text{in } \Omega  \label{6.13f}.
\end{align}
The existence of a unique solution $(D,B^1)$ to the problem \eqref{6.10f}--\eqref{6.13f} such that
$D\in X_1$,  $B^1 \in X_1$ follows from Theorem 6.1.

Thus, if the pair $(D,B)$  is the solution to the problem \eqref{6.1}--\eqref{6.4}, and
\begin{equation}
B^2 \in X, \quad  \frac{\di^2 B^2}{\di t^2} \in L^2(Q)^3, \label{6.13p}
\end{equation}
and \eqref{6.8b} is satisfied, then the pair $(D,B^1)$  with $B^1= B-B^2$ is the solution to the problem
   \eqref{6.10f}--\eqref{6.13f}.

   On the contrary, if the couple  $(D,B^1)$  is the solution to the problem \eqref{6.10f}--\eqref{6.13f}, where $B^2$
   meets  \eqref{6.13p}, then the couple $(D, B)$  with $B=B^1+B^2$  is the solution to the problem
   \eqref{6.1}--\eqref{6.4}, and  \eqref{6.8b}   holds.

Therefore, the formulations \eqref{6.1}--\eqref{6.4} and  \eqref{6.10f}--\eqref{6.13f} are equivalent in the
above sense.

\medskip
Let $\{G_{1n},\, G_{2n},\,D_{0n},\,B_{0n}^1,\,\hat{\mu}_n,\,\hat{\xi}_n,\,\sigma_n\}$ be a sequence such
that
\begin{gather}
G_{in} \in C^{\infty}([0,T];\mathcal{D}(\Omega)^3),    \notag \\
G_{in}(x,t) =  \sum_{k=0}^\infty\,\frac 1{k!}\,
     \frac{\di^k G_{in}}{\di t^k}\, (x,0) t^k,  \quad (x,t) \in Q,\quad i=1,2,  \notag \\
G_{1n} \to G_1 + \curl(\hat{\mu}B^2 )  \quad\text{in}\,\, H^{0,1}(Q)^3,   \notag \\
G_{2n} \to G_2 - \frac{\di B^2}{\di t} \quad\text{in}\,\, H^{0,1}(Q)^3,    \label{6.1a}\\
D_{0n} \in \mathcal{D}(\Omega)^3, \,\, D_{0n} \to D_0 \,\,\text{in}\,\, V_1, \,\,
 B^1_{0n} \in \mathcal{D}(\Omega)^3, \,\,
  B^1_{0n} \to B^1_0 \,\,\text{in}\,\, V_1,
\label{6.1b} \\
   \hat{\mu}_n\in C^\infty(\overline{\Omega}),\,  \hat{\mu}_n\to   \hat{\mu} \,\text{in } C^1(\overline{\Omega}), \,\,                            \hat{\xi}_n\in C^\infty(\overline{\Omega}),\,  \hat{\xi}_n\to \hat{\xi} \quad\text{in } C^1(\overline{\Omega}), \notag \\
\sigma_n \in C^\infty(\overline{\Omega}),  \,\,\sigma_n \to \sigma\quad\text{in}\,\, L^\infty(\Omega).
 \label{6.1c}
\end{gather}

Consider the problem: Find functions $D_n$ and $B_n^1$ such that
\begin{align}
&\frac{\di D_n}{\di t} - \curl(\hat{\mu}_n B_n^1) + \sigma_n \hat{\xi}_n D_n  =
                G_{1n}   \quad\text{in }Q,     \label{6.9a} \\
&\frac{\di B_n^1}{\di t} + \curl(\hat{\xi}_n D_n) = G_{2n} \quad \text{in } Q,\label{6.9b}  \\
&\nu\wedge D_n  = 0\quad\text{on } S_T, \,\,\nu\wedge B_n^1  = 0\quad\text{on } S_T,
                                     \label{6.9c}  \\
&D_n\Big|_{t=0} = D_{0n}, \quad B_n^1\Big|_{t=0} = B_{0n}^1 \quad\text{in } \Omega \label{6.9d}.
\end{align}

\begin{theorem}
Let $\Omega$ be a bounded domain, in $\mathbb{R}^3$ with a boundary $S$ of the class $C^\infty$
and $T \in (0,\infty)$.
Suppose that the conditions of Theorem 6.1 and \eqref{6.8a}, \eqref{6.8b},  \eqref{6.8c} are satisfied.
Let also \eqref{6.1a}--\eqref{6.1c} hold. Then for any $n \in \mathbb{N}$  there exists a unique solution
$(D_n,B_n^1)$ to the problem \eqref{6.9a}--\eqref{6.9d} that is represented in the form

\begin{gather}
D_n(x,t)  = D_{0n}(x) +  \sum_{k=1}^\infty\,\frac 1{k!}\,
     \frac{\di^k D_n}{\di t^k}\, (x,0) t^k,   \label{6.10}\\
B_n^1(x,t)  = B_{0n}^1(x) +  \sum_{k=1}^\infty\,\frac 1{k!}\,
     \frac{\di^k B^1_n}{\di t^k}\, (x,0) t^k,   \label{6.11}
\end{gather}
where
\begin{align}
&\frac{\di^k  D_n}{\di t^k}(x,0) = \curl\bigg(\hat{\mu}_n(x) \frac{\di^{k-1} B_n^1}{\di t^{k-1}}\bigg)(x,0)
     - \sigma_n (x)\hat{\xi}_n (x) \frac{\di^{k-1}D_n}{\di t^{k-1}}(x,0)            \notag\\
&  + \frac{\di^{k-1} G_{1n}}{\di t^{k-1}}(x,0), \quad k = 1,2,\dots,      \notag \\
&\frac{\di^k B^1_n}{\di t^k}(x,0) = - \curl\bigg(\hat{\xi}_n(x) \frac{\di^{k-1}D_n}{\di t^{k-1}}\bigg)(x,0)
   + \frac{\di^{k-1} G_{2n}}{\di t^{k-1}}(x,0), \quad x \in \Omega, \quad k = 1,2,\dots.
       \label{6.12}
\end{align}
The series for $D_n$ and $B_n^1$ converge in $X_1$ and
\begin{equation}\label{6.13}
D_n \to  D \quad\text{in } X_1, \quad B_n^1 \to B^1\quad \text{in } X_1,
\end{equation}
where $(D,B^1)$ is the solution to the problem \eqref{6.10f}--\eqref{6.13f}.
\end{theorem}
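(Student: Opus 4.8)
The plan is to follow the scheme of Theorems 2.1 and 4.1: establish existence, uniqueness, and the recurrence relations for the Taylor coefficients from Theorem 6.1, then prove $C^\infty$ regularity together with convergence of the series in $X_1$, and finally deduce $D_n\to D$, $B_n^1\to B^1$ from continuous dependence on the data.

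First I would fix $n$ and apply Theorem 6.1 to the smooth, compactly supported data $(G_{1n},G_{2n},D_{0n},B_{0n}^1)$, obtaining a unique solution $(D_n,B_n^1)$ with $D_n,B_n^1\in X_1$. Because the coefficients $\hat\mu_n$, $\hat\xi_n$, $\sigma_n$ depend on $x$ alone, differentiating \eqref{6.9a} and \eqref{6.9b} in $t$ exactly $k-1$ times and setting $t=0$ produces the recurrence relations \eqref{6.12} with no binomial corrections. Since $\curl$ maps $\mathcal{D}(\Omega)^3$ into itself, an induction on $k$ then shows that every coefficient $\frac{\di^k D_n}{\di t^k}(\cdot,0)$ and $\frac{\di^k B_n^1}{\di t^k}(\cdot,0)$ belongs to $\mathcal{D}(\Omega)^3$.

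To obtain smoothness and convergence of the series, I would extend the data to the enlarged domain $\Omega_1$ and bootstrap regularity as in Theorem 4.1. Differentiating \eqref{6.9a} in $t$ and substituting \eqref{6.9b} eliminates $B_n^1$ and yields a second-order equation for $D_n$ whose spatial part is $\curl(\hat\mu_n\curl(\hat\xi_n\,\cdot))$ under the boundary condition $\nu\wedge D_n=0$; solving the auxiliary problems for $\frac{\di^2 D_n}{\di t^2}$ and for the spatial second derivatives raises the regularity by two orders at each step, giving $D_n\in C^\infty(\overline{Q})^3$ and hence $B_n^1\in C^\infty(\overline{Q})^3$ via \eqref{6.9b}. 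The partial sums of \eqref{6.10}, \eqref{6.11} solve \eqref{6.9a}, \eqref{6.9b} with truncated right-hand sides, so the continuous dependence in Theorem 6.1 forces them to converge in $X_1$ to $(D_n,B_n^1)$, which establishes the Taylor representations.

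Finally, the pair $(D,B^1)$ solving \eqref{6.10f}--\eqref{6.13f} depends continuously on its data by Theorem 6.1, so since the approximating data converge by \eqref{6.1a}--\eqref{6.1c}, passing to the limit yields $D_n\to D$ and $B_n^1\to B^1$ in $X_1$. The hard part will be the regularity bootstrap: unlike the genuinely elliptic operator of the parabolic and hyperbolic cases, the operator $\curl(\hat\mu_n\curl(\hat\xi_n\,\cdot))$ is only semi-elliptic, annihilating gradient fields, so standard elliptic estimates do not directly upgrade the $X_1$ solution. One must track the divergence constraint $\diver(\hat\xi_n D_n)$ in time and invoke div-curl estimates on $V_1$ (as in \cite{Gir.}) to convert control of curl and divergence into a genuine gain of Sobolev regularity at each step, rather than merely tangential regularity.
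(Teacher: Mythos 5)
Your first two steps and your treatment of the series coincide with the paper's: existence and uniqueness of $(D_n,B_n^1)$ come from Theorem 6.1, the recurrence \eqref{6.12} is obtained by repeated $t$-differentiation at $t=0$ (with no binomial terms since $\hat\mu_n,\hat\xi_n,\sigma_n$ are $t$-independent), and the convergence of the Taylor series in $X_1$ is obtained by applying the a priori estimate to the difference between the solution and the partial sums, which solve the system with truncated right-hand sides. The genuine gap is in your final step. You derive \eqref{6.13} from ``continuous dependence on the data,'' but the problems solved by $(D_n,B_n^1)$ and by $(D,B^1)$ are \emph{not} the same system with different data: the coefficients $\hat\mu_n$, $\hat\xi_n$, $\sigma_n$ themselves vary with $n$, and the estimate behind Theorem 6.1 (inequality \eqref{6.14} in the paper) is proved for a fixed coefficient triple, with a constant $C$ depending on $\hat\mu,\hat\xi,\sigma$. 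Continuous dependence on the coefficients is not available off the shelf and is exactly what must be proved. The paper devotes the second half of its proof to this: it first establishes bounds $\|D_n\|_{X_1}\le C_1$, $\|B_n^1\|_{X_1}\le C_2$ uniform in $n$ (by re-running the existence argument of \cite{Du.} using \eqref{6.1a}--\eqref{6.1c}), then subtracts \eqref{6.9a}--\eqref{6.9d} from \eqref{6.10f}--\eqref{6.13f} and rewrites the coefficient mismatch as forcing terms
\begin{equation*}
\gamma_{1n} = -\curl\big((\hat{\mu}-\hat{\mu}_n)B_n^1\big) + D_n\big(\sigma\hat{\xi}-\sigma_n\hat{\xi}_n\big),
\qquad
\gamma_{2n} = \curl\big((\hat{\xi}-\hat{\xi}_n)D_n\big),
\end{equation*}
which tend to zero in $L^\infty(0,T;L^2(\Omega)^3)$ precisely because of the uniform bounds \eqref{6.15} and the $C^1$/$L^\infty$ convergence \eqref{6.1c}; only then can the fixed-coefficient estimate \eqref{6.14} be applied to the difference system to yield \eqref{6.13}. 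Without the uniform bounds and this perturbation argument, your conclusion does not follow from anything you have established.

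The regularity bootstrap occupying the middle of your proposal is a detour the paper does not take and the theorem does not require: the claim is only that the series converge in $X_1$ and represent the solution, which you already get from the partial-sum/continuous-dependence argument, so $C^\infty(\overline{Q})$ smoothness of $(D_n,B_n^1)$ is never needed. Moreover, the bootstrap as you describe it would not go through as stated: you correctly observe that $\curl\big(\hat\mu_n\curl(\hat\xi_n\,\cdot)\big)$ is not elliptic, but in addition the general problem \eqref{6.9a}--\eqref{6.9d} imposes no divergence constraint on $D_n$ at all (that constraint appears only in the slotted-antenna problem \eqref{6.46}), so the quantity $\diver(\hat\xi_n D_n)$ you propose to ``track'' would itself have to be controlled through a separate evolution equation before any div-curl estimate could be invoked. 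The cleanest repair of your proof is therefore to delete this step entirely and to replace the last step by the uniform-bound-plus-perturbation argument described above.
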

\begin{proof}
The existence of the unique solution to the problem \eqref{6.9a}--\eqref{6.9d} follows from
 Theorem 6.1. The condition of compatibility  of order infinity for this problem is satisfied.
Because of this, informally, the solution  to the problem \eqref{6.9a}--\eqref{6.9d} is represented
in the form \eqref{6.10}, \eqref{6.11} .

It follows from the proofs of Theorems 5.1 and 4.1 in \cite{Du.}, Chapter 7 that, in
the case where $\hat{\xi}$, $\hat{\mu}$, and $\sigma$  are fixed functions that satisfy conditions \eqref{6.6c}, \eqref{6.6d}, the following inequality for the solution
to the problem \eqref{6.10f}--\eqref{6.13f} holds:
\begin{equation}\label{6.14}
\|D\|_{X_1} + \|B^1\|_ {X_1} \le  C(\|G_1\|_{H^{0,1}(Q)^3} + \|G_2\|_{H^{0,1}(Q)^3} +
\|D_0\|_{V_1} + \|B_0^1\|_ {V_1}),
\end{equation}
where $C$ depends on $\hat{\xi}$, $\hat{\mu}$, and $\sigma$.

The converges of the series  \eqref{6.10} and \eqref{6.11} in $X_1$ is proved analogously
to  the above by  using  \eqref{6.1a}--\eqref{6.1c}, and \eqref{6.14}.

Taking  \eqref{6.1a}--\eqref{6.1c} into account in the same way  as it is done in \cite{Du.},
Theorems 4.1 and~5.1, Chapter 7, we get
\begin{equation}\label{6.15}
\|D_n\|_{X_1} \le C_1,  \quad  \|B_n^1\|_ {X_1} \le  C_2.
\end{equation}
Therefore, we can extract a subsequence $\{D_m, B_m^1\}$ such that
\begin{align}
&D_m \to D\quad  \text{$*$-weakly in } X_1, \notag\\
&B_m^1 \to B^1\quad  \text{$*$-weakly in }X_1.        \label{6.16}
\end{align}
Let $w_1$ and $w$ be arbitrary elements of $L^2(\Omega)^3$. We take the scalar products
of \eqref{6.9a} and \eqref{6.9b} for $n=m$ with $w_1$ and $w$, respectively, in  $L^2(\Omega)^3$.
This gives
\begin{align}
&\bigg(\frac{\di D_m}{\di t}, w_1\bigg) -\Big (\curl(\hat{\mu}_m B_m^1), w_1\bigg)
          + (\sigma_m\hat{\xi}_m D_m, w_1) = (G_{1m}, w_1) \quad\text{a.e. in }  (0,T),
            \label{6.17a} \\
&\bigg(\frac{\di B_m^1}{\di t}, w\bigg) + \Big (\curl(\hat{\xi}_m D_m), w\bigg) = (G_{2m}, w)\quad\text{a.e. in } (0,T).    \label{6.17b}
\end{align}

Taking \eqref{6.1a}--\eqref{6.1c}  and \eqref{6.16} into account, we pass to the limit as $m \to \infty$
in  \eqref{6.17a},  \eqref{6.17b}, and \eqref{6.9c}, \eqref{6.9d}. We conclude that the pair $(D,B^1)$
determined in \eqref{6.16} is a solution to the problem \eqref{6.10f}--\eqref{6.13f}. Since the solution
to this problem is unique in $X_1 \times X_1$, \eqref{6.16} is also valid when $m$ is replaced by  $n$.

It remains to prove \eqref{6.13}.

We subtract equalities  \eqref{6.9a}--\eqref{6.9d} from \eqref{6.10f}--\eqref{6.13f}, respectively. This gives
\begin{align}
&\frac{\di }{\di t}(D-D_n) - \curl(\hat{\mu} B^1 - \hat{\mu}_n B_n^1)
          + \sigma\hat{\xi} D - \sigma_n \hat{\xi}_n D_n  = G_1 + \curl(\hat{\mu} B^2) -  G_{1n},
            \label{6.18} \\
&\frac{\di}{\di t}(B^1-B_n^1) + \curl(\hat{\xi} D - \hat{\xi}_n D_n) = G_2 - \frac{\di B^2}{\di t} - G_{2n},
           \label{6.19} \\
&\nu\wedge(D-D_n) = 0 \,\,\text{on } \,\,  S_T, \quad \nu\wedge(B^1-B_n^1) = 0 \quad\text{on }   S_T,
             \label{6.20} \\
&(D-D_n)\Big|_{t=0} = D_0-D_{0n} \quad\text{in }  \Omega, \quad (B^1-B_n^1)\Big|_{t=0} = B_0^1-B_{0n}^1
                   \quad\text{in }   \Omega.   \label{6.21}
\end{align}

  We have
 \begin{align}
&\curl(\hat{\mu} B^1 - \hat{\mu}_n B_n^1) = \curl(\hat{\mu}( B^1 - B_n^1))
    + \curl((\hat{\mu}  - \hat{\mu}_n) B_n^1),    \notag    \\
&\sigma\hat{\xi} D - \sigma_n \hat{\xi}_n D_n  = \sigma\hat{\xi} (D - D_n)
    + D_n (\sigma\hat{\xi}  - \sigma_n \hat{\xi}_n),    \notag    \\
&\curl(\hat{\xi} D - \hat{\xi}_n D_n) =  \curl(\hat{\xi}( D - D_n))
    + \curl((\hat{\xi} -\hat{\xi}_n) D_n).       \label{6.22}
\end{align}

 We denote
  \begin{align}
&\gamma_{1n} = -\curl((\hat{\mu}  - \hat{\mu}_n) B_n^1)
       +  D_n (\sigma\hat{\xi}  - \sigma_n \hat{\xi}_n),    \notag    \\
&\gamma_{2n} = \curl((\hat{\xi} - \hat{\xi}_n) D_n). \label{6.23}
\end{align}

 \eqref{6.1c} and \eqref{6.15}  yield
 \begin{equation}\label{6.24}
\gamma_{1n} \to 0  \quad\text{in} \,\,  L^{\infty}(0,T; L_2(\Omega)^3),  \,\,
\gamma_{2n} \to 0  \quad\text{in} \,\, L^{\infty}(0,T; L_2(\Omega)^3).
\end{equation}

By \eqref{6.22}--\eqref{6.24} equations \eqref{6.18}, \eqref{6.19} take the form
\begin{align}
&\frac{\di }{\di t}(D-D_n) - \curl(\hat{\mu} (B^1 -  B_n^1))
          + \sigma\hat{\xi} (D - D_n) +\gamma_{1n}  = G_1 +  \curl(\hat{\mu} B^2) -  G_{1n}
             \notag \\
&\frac{\di}{\di t}(B^1-B_n^1) + \curl(\hat{\xi}( D -  D_n))+\gamma_{2n}.
         = G_2 - \frac{\di B^2}{\di t} -G_{2n}.  \notag
\end{align}
From here and \eqref{6.14},  taking \eqref{6.1a}, \eqref{6.1b}, and \eqref{6.24}
into account, we obtain \eqref{6.13}.
\end{proof}

According to the theory of electromagnetism, the function $B$ should satisfy the condition
\begin{equation}\label{6.26}
\diver B = 0 \quad\text{in } Q.
\end{equation}
\begin{theorem}
Suppose that the conditions of Theorem 6.1 are satisfied and, in addition,
\begin{equation}\label{6.27}
\diver G_2 =  0 \quad\text{in } Q, \quad \diver B_0 =  0 \quad\text{in } \Omega.
\end{equation}
Then the function $B$  of the solution $(D,B)$ to the problem \eqref{6.1}--\eqref{6.4}
also meets the condition $\diver B =  0$ in  $Q$.
\end{theorem}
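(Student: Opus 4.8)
The plan is to take the divergence of the evolution equation \eqref{6.2} for $B$ and to exploit the two classical facts that $\diver\curl$ annihilates every vector field and that $\diver G_2=0$. Formally, applying $\diver$ to \eqref{6.2} gives
\begin{equation}\notag
\frac{\di}{\di t}\,\diver B + \diver\curl(\hat{\xi}D) = \diver G_2,
\end{equation}
and since $\diver\curl(\hat{\xi}D)=0$ identically and $\diver G_2=0$ by \eqref{6.27}, one would obtain $\frac{\di}{\di t}\diver B=0$; that is, $\diver B$ is independent of $t$. The initial condition $B|_{t=0}=B_0$ from \eqref{6.4}, together with $\diver B_0=0$ from \eqref{6.27}, would then force $\diver B=0$ throughout $Q$.

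To make this rigorous I would avoid differentiating the merely distributional object $\diver B$ directly and instead test against gradients of scalar functions. Fix $\psi\in\mathcal{D}(\Omega)$ and set
\begin{equation}\notag
\beta(t)=\int_\Omega B(x,t)\cdot\grad\psi(x)\,dx = -\langle\diver B(\cdot,t),\psi\rangle.
\end{equation}
Because $B\in X$ has $\frac{\di B}{\di t}\in L^\infty(0,T;L^2(\Omega)^3)$, the function $\beta$ is absolutely continuous with $\beta'(t)=\int_\Omega\frac{\di B}{\di t}\cdot\grad\psi\,dx$ for a.e.\ $t$. Substituting \eqref{6.2} in the form $\frac{\di B}{\di t}=G_2-\curl(\hat{\xi}D)$ yields
\begin{equation}\notag
\beta'(t)=\int_\Omega G_2\cdot\grad\psi\,dx - \int_\Omega \curl(\hat{\xi}D)\cdot\grad\psi\,dx.
\end{equation}
The first integral equals $-\int_\Omega(\diver G_2)\psi\,dx=0$ by \eqref{6.27}. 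For the second, the Green formula for the curl gives
\begin{equation}\notag
\int_\Omega\curl(\hat{\xi}D)\cdot\grad\psi\,dx=\int_\Omega(\hat{\xi}D)\cdot\curl\grad\psi\,dx+\int_S\big((\hat{\xi}D)\wedge\grad\psi\big)\cdot\nu\,dS,
\end{equation}
and both terms vanish: $\curl\grad\psi=0$, while $\grad\psi=0$ on $S$ because $\psi$ has compact support in $\Omega$. Here $\hat{\xi}D\in L^\infty(0,T;V)$ by \eqref{6.6c} and $D\in X_1$, so $\curl(\hat{\xi}D)\in L^2(\Omega)^3$ for a.e.\ $t$ and all integrals are well defined.

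Consequently $\beta'(t)=0$ a.e., so $\beta(t)=\beta(0)$ for every $t\in[0,T]$. Since $\beta(0)=\int_\Omega B_0\cdot\grad\psi\,dx=-\langle\diver B_0,\psi\rangle=0$ by \eqref{6.27}, I would conclude $\langle\diver B(\cdot,t),\psi\rangle=0$ for all $t$ and all $\psi\in\mathcal{D}(\Omega)$, which is exactly $\diver B=0$ in $Q$. The only delicate point is the functional-analytic bookkeeping: $\diver B$ is a priori only a distribution, so one must justify the trace $B(0)=B_0$ and the differentiation of $\beta$ in time, but both follow from the regularity $B\in X$, and the compact support of $\psi$ removes every boundary contribution. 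I therefore expect no genuine obstacle beyond this routine verification.
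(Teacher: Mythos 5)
Your proposal is correct and follows essentially the same route as the paper: the paper applies $\diver$ in the sense of distributions to equation \eqref{6.2}, notes that $\diver\curl(\hat{\xi}D)$ vanishes and $\diver G_2=0$, and integrates in time from $\diver B_0=0$. Your version merely makes the distributional step explicit by pairing with $\grad\psi$, $\psi\in\mathcal{D}(\Omega)$, and verifying the absolute continuity of $t\mapsto\int_\Omega B\cdot\grad\psi\,dx$, which is a sound (and slightly more careful) rendering of the same argument.
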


Indeed, applying the operator $\diver$, in the sense of distributions, to both sides of
equation~\eqref{6.2}, we obtain
\begin{equation}\notag
\frac{\di}{\di t}(\diver B) = \diver G_2 \quad\text{in } Q.
\end{equation}
That is
\begin{equation}\notag
\diver B(\cdot,t) = \diver B_0 + \int_0^t \, \diver G_2(\cdot,\tau)d\tau = 0 \quad\text{in }(0,T).
\end{equation}

\begin{theorem}
Suppose that the conditions of Theorem 6.2 are satisfied and, in addition,
\begin{align}
&G_2 = \curl F, \quad F \in L^2(0,T;V), \quad \frac{\di F}{\di t} \in L^2(0,T;V), \notag\\
&B^2 = \curl P, \,\, P \in L^\infty(0,T;H^2(\Omega)^3), \,\, \frac{\di P}{\di t}
                     \in L^\infty(0,T;H^1(\Omega)^3), \notag\\
&\frac{\di^2 P}{\di t^2} \in L^2(0,T;H^1(\Omega)^3), \notag\\
&B_0^1 = \curl M^1, \,\, M^1\in  H_0^2(\Omega)^3, \,\,
                  B_0^2 = \curl M^2, \,\, M^2\in  H^2(\Omega)^3, \,\,
P|_{t=0} = M^2.                           \label{6.28k}
\end{align}

The corresponding functions $G_{2n}$ and $B_{0n}^1$ are given as follows
\begin{align}
&G_{2n} = \curl F_n, \quad F_n \in C^\infty([0,T];\mathcal{D}(\Omega)^3), \notag\\
& F_n(x,t) = \sum_{k=0}^\infty\,\frac 1{k!}\,
             \frac{\di^k  F_n}{\di t^k}\, (x,0) t^k,  \quad (x,t) \in Q,  \label{6.29} \\
& \curl F_n \to  \curl\bigg(F - \frac{\di P}{\di t}\bigg) \quad\text{in } L^2(Q)^3,  \notag\\
&\curl \frac{\di F_n}{\di t} \to \curl \bigg(\frac{\di F}{\di t} - \frac{\di^2 P}{\di t^2}\bigg)
                \quad\text{in } L^2(Q)^3,    \label{6.30} \\
&B_{0n}^1 = \curl M_n^1, \quad M_n^1\in \mathcal{D} (\Omega)^3, \quad
                 M_n^1 \to M^1  \quad\text{in }H_0^2(\Omega)^3.  \label{6.31}
\end{align}
Then the solution $D_n$, $B_n^1$ to the problem \eqref{6.9a}--\eqref{6.9d}
also meets the condition $\diver B_n^1 = 0$,
\eqref{6.13} holds and $\diver B = 0$.
\end{theorem}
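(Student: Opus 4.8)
The plan is to reduce everything to the recurrence \eqref{6.12}, the convergence conclusion of Theorem 6.2, and the argument already used for Theorem 6.3, the only genuinely new input being the curl representations \eqref{6.28k}--\eqref{6.31}.

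First I would establish $\diver B_n^1 = 0$. Applying $\diver$, in the sense of distributions, to both sides of \eqref{6.9b} and using $\diver\circ\curl = 0$ gives
\begin{equation}\notag
\frac{\di}{\di t}(\diver B_n^1) = \diver G_{2n} = \diver\curl F_n = 0 \quad\text{in } Q,
\end{equation}
where the last equality uses $G_{2n} = \curl F_n$ from \eqref{6.29}. Integrating in $t$ and invoking $\diver B_{0n}^1 = \diver\curl M_n^1 = 0$, which follows from \eqref{6.31}, yields $\diver B_n^1(\cdot,t) = 0$ for all $t\in(0,T)$. Equivalently, each Taylor coefficient $\frac{\di^k B_n^1}{\di t^k}(\cdot,0)$ in \eqref{6.12} is a curl, being the sum of $-\curl(\hat\xi_n\frac{\di^{k-1}D_n}{\di t^{k-1}})$ and $\frac{\di^{k-1}}{\di t^{k-1}}\curl F_n$, and so is $B_{0n}^1$; hence every term of the series \eqref{6.11} is divergence-free.

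Next I would check that the hypotheses \eqref{6.1a}--\eqref{6.1c} of Theorem 6.2 remain valid, so that \eqref{6.13} follows directly from that theorem. The conditions on $G_{1n}$ and $D_{0n}$ are unchanged; for the other two it suffices that $\curl$ acts continuously between the relevant spaces. Since $G_{2n} = \curl F_n$ and $G_2 - \frac{\di B^2}{\di t} = \curl F - \frac{\di}{\di t}\curl P = \curl\big(F - \frac{\di P}{\di t}\big)$, the two convergences in \eqref{6.30} say precisely that $G_{2n}$ and $\frac{\di G_{2n}}{\di t}$ converge in $L^2(Q)^3$ to $G_2 - \frac{\di B^2}{\di t}$ and its time derivative, i.e.\ $G_{2n} \to G_2 - \frac{\di B^2}{\di t}$ in $H^{0,1}(Q)^3$, as required by \eqref{6.1a}. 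Similarly $B_{0n}^1 = \curl M_n^1$ with $M_n^1 \in \mathcal{D}(\Omega)^3$, so $B_{0n}^1 \in V_1$, and $M_n^1 \to M^1$ in $H_0^2(\Omega)^3$ forces $\curl M_n^1 \to \curl M^1 = B_0^1$ in the $V$-norm, since both $\curl M_n^1 \to \curl M^1$ and $\curl\curl M_n^1 \to \curl\curl M^1$ hold in $L^2(\Omega)^3$ (first and second derivatives of $M_n^1$ converge). This delivers the $B_{0n}^1 \to B_0^1$ in $V_1$ demanded by \eqref{6.1b}. With \eqref{6.1a}--\eqref{6.1c} in hand, Theorem 6.2 gives $D_n \to D$ and $B_n^1 \to B^1$ in $X_1$, which is \eqref{6.13}.

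Finally, for $\diver B = 0$ I would appeal to Theorem 6.3. Under \eqref{6.28k} one has $\diver G_2 = \diver\curl F = 0$ in $Q$ and $\diver B_0 = \diver\curl(M^1 + M^2) = 0$ in $\Omega$, so the additional hypotheses \eqref{6.27} are satisfied; as the conditions of Theorem 6.1 are part of the standing assumptions, Theorem 6.3 applies and gives $\diver B = 0$ in $Q$. (This can also be obtained by passing to the limit in $\diver B_n^1 = 0$ via \eqref{6.13}, which yields $\diver B^1 = 0$, and then $\diver B = \diver B^1 + \diver\curl P = 0$.) The only step requiring real care is the verification in the third paragraph that the curl representations indeed produce the $H^{0,1}(Q)^3$- and $V_1$-convergences of Theorem 6.2; once the continuity of $\curl$ on these spaces is confirmed, the result is a direct consequence of Theorems 6.2 and 6.3.
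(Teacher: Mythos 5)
Your proposal is correct and takes essentially the same approach as the paper: the paper's entire proof is the one-line assertion that the theorem follows from Theorems 6.2 and 6.3, and your argument is exactly a detailed verification of that assertion. In particular, your check that the curl representations \eqref{6.29}--\eqref{6.31} produce the convergences in $H^{0,1}(Q)^3$ and $V_1$ required by \eqref{6.1a}, \eqref{6.1b}, and your application of the divergence argument from the proof of Theorem 6.3 to equation \eqref{6.9b} (giving $\diver B_n^1=0$ from $\diver\curl=0$ and $\diver B_{0n}^1=0$), supply precisely the details the paper leaves implicit.
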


Theorem 6.4 follows from results of Theorems 6.2 and 6.3.
\subsection{Slotted antenna}

We consider the problem on diffraction of electromagnetic  wave by a superconductor, see
\cite{Du.}, Chapter 7, Section 3.4. Let $\Omega_1$ be a bounded domain in $\mathbb{R}^3$,
of a superconductor, the boundary $S$ of $\Omega_1$  is of the class $C^\infty$. We consider a problem
in a domain $\Omega$ in $\mathbb{R}^3$ with an internal boundary $S$.
We assume that  $\Omega$ is a bounded domain.

We seek a solution to the following problem: Find vector functions $D$ and $B$ such that
\begin{align}
&\frac{\di D}{\di t} - \curl(\hat{\mu}B) + \sigma \hat{\xi} D  = G_1\quad\text{in } Q,  \label{6.28f} \\
&\frac{\di B}{\di t} + \curl(\hat{\xi} D) = G_2\quad\text{in } Q,      \label{6.29f}  \\
&\diver D = 0 \quad\text{in } Q, \quad \nu\wedge D  = 0\quad\text{on } S_T,   \label{6.30f}  \\
&\diver B = 0 \quad\text{in } Q, \quad \nu\cdot B  = 0\quad\text{on } S_T,   \label{6.31f}  \\
&D\Big|_{t=0} = D_0, \quad B\Big|_{t=0} = B_0 \quad\text{in } \Omega.  \label{6.32f}
\end{align}
We introduce the following spaces:
\begin{align}
&X_2 = \left\{h\mid h = \curl w, w \in L^2(0,T;V), \quad \frac{\di w}{\di t} \in L^2(0,T;V)\right\}, \notag\\
&X_3 = \bigg\{h\mid h = \curl w, w \in L^2(0,T;H^1(\Omega)^3), \label{6.32} \\
& h\cdot \nu = 0\ \text{in }  L^2(0,T;H^{-\frac12}(S)),\
              \frac{\di w}{\di t} \in L^2(0,T;H^1(\Omega)^3)\bigg\}. \label{6.33}
\end{align}
We assume
\begin{align}
&G_1 = \curl w \in X_2, \quad  G_2 = \curl u\in X_3,\quad D_0 =\curl p, \quad p\in H_0^2(\Omega)^3,
              \notag \\
&B_0 =\curl v,  \quad  v\in H^2(\Omega)^3,  \quad   \curl v\cdot \nu = 0  \quad\text{on }  S.  \label{6.37}
 \end{align}

\begin{theorem}
Let $\Omega$ be a bounded domain in $\mathbb{R}^3$ with a boundary $S$ of the class $C^\infty$.
Suppose that the conditions \eqref{6.37} are satisfied. Let also $\hat{\xi}$, $\hat{\mu}$, $\sigma$
be positive constants. Then, there exists a unique solution to the problem \eqref{6.28f}--\eqref{6.32f}
 such that
\begin{align}
&D \in L^\infty(0,T;H_0^1(\Omega)^3), \quad \frac{\di D}{\di t}
                     \in L^\infty(0,T;L^2(\Omega)^3), \notag\\
&B \in L^\infty(0,T;H^1(\Omega)^3), \quad \frac{\di B}{\di t}
                     \in L^\infty(0,T;L^2(\Omega)^3).   \label{6.38}
\end{align}
\end{theorem}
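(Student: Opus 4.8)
The plan is to realize $(D,B)$ as the limit of the smooth approximate solutions furnished by the Taylor-expansion scheme of Theorems 6.2 and 6.4, and then to raise the regularity from the $V$-class to the $H^1$-class by div-curl elliptic regularity, exploiting that under \eqref{6.37} every datum is the curl of a potential one order smoother. First I would note that \eqref{6.37}, together with the constancy of $\hat\mu,\hat\xi,\sigma$, gives $G_1,G_2\in H^{0,1}(Q)^3$, $D_0\in V_1$ and $B_0\in V$, so that the existence theory of \cite{Du.}, Chapter~7, Section~3.4, applies to the slotted geometry (internal $C^\infty$ boundary $S$) and yields a unique weak solution of \eqref{6.28f}--\eqref{6.32f} with $D\in X_1$ and $B\in X$. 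Choosing the approximating data $G_{1n},G_{2n},D_{0n},B_{0n}^1\in\mathcal{D}(\Omega)^3$ as in \eqref{6.29}--\eqref{6.31}, the recursion \eqref{6.12} produces smooth solutions $(D_n,B_n^1)\in C^\infty([0,T];\mathcal{D}(\Omega)^3)$ converging to $(D,B^1)$ by the argument of Theorem 6.2, where $B=B^1+B^2$ with the prescribed $B^2$.

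Next I would verify both divergence constraints. Since $\hat\mu,\hat\xi,\sigma$ are constants, applying $\diver$ to \eqref{6.28f} and \eqref{6.29f} annihilates the curl terms, $\diver\curl(\hat\mu B)=\hat\mu\,\diver\curl B=0$ and $\diver\curl(\hat\xi D)=0$, leaving
\begin{equation}\notag
\frac{\di}{\di t}\,\diver D+\sigma\hat\xi\,\diver D=\diver G_1,\qquad \frac{\di}{\di t}\,\diver B=\diver G_2 .
\end{equation}
As $G_1=\curl w$, $G_2=\curl u$, $D_0=\curl p$ and $B_0=\curl v$ are all divergence-free, the ordinary-differential argument of Theorem 6.3 forces $\diver D=0$ and $\diver B=0$ in $Q$.

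The decisive step, which I expect to be the main obstacle, is the passage to $H^1$. Solving \eqref{6.29f} for the curl gives $\curl D=\hat\xi^{-1}\bigl(G_2-\tfrac{\di B}{\di t}\bigr)\in L^\infty(0,T;L^2(\Omega)^3)$, since $\tfrac{\di B}{\di t}\in L^\infty(0,T;L^2)$ by $B\in X$ and $G_2\in X_3\subset H^{0,1}(Q)^3\hookrightarrow C([0,T];L^2)$; combined with $\diver D=0$ and the tangential condition $\nu\wedge D=0$, the div-curl (Gaffney-type) regularity estimate on a domain with $C^\infty$ boundary yields $D(\cdot,t)\in H^1(\Omega)^3$ with a bound uniform in $t$. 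Symmetrically \eqref{6.28f} gives $\curl B=\hat\mu^{-1}\bigl(\tfrac{\di D}{\di t}+\sigma\hat\xi D-G_1\bigr)\in L^\infty(0,T;L^2)$, and with $\diver B=0$ and the normal condition $\nu\cdot B=0$ the same estimate yields $B(\cdot,t)\in H^1(\Omega)^3$. The normal trace is propagated in time by differentiating $\nu\cdot B$ and using $\nu\cdot\curl D=0$ (a consequence of $\nu\wedge D=0$) together with $\nu\cdot G_2=0$ (built into $X_3$), so that $\tfrac{\di}{\di t}(\nu\cdot B)=0$ while $\nu\cdot B_0=\curl v\cdot\nu=0$ by \eqref{6.37}. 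The time derivatives lie in $L^\infty(0,T;L^2)$ by the definitions of $X_1$ and $X$, which completes \eqref{6.38}. The chief difficulty here is to secure the div-curl estimate on this (possibly topologically nontrivial) slotted domain with the mixed boundary conditions and with bounds uniform in $t$.

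There remains the delicacy that $\nu\wedge D=0$ alone only gives $D\in H^1$, whereas \eqref{6.38} asserts the full space $H^1_0(\Omega)^3$; the normal trace $\nu\cdot D$ is not controlled by the evolution equation. Here I would invoke the approximation: each $D_n$ is $\mathcal{D}(\Omega)^3$-valued, so $\nu\cdot D_n=0$, and since $\diver D_n=0$ the normal trace is continuous under the $L^2$-convergence $D_n\to D$, whence $\nu\cdot D=0$; together with $\nu\wedge D=0$ this places $D$ in $H^1_0(\Omega)^3$, whereas $B$ retains only $\nu\cdot B=0$ because its datum $B_0=\curl v$ with $v\in H^2(\Omega)^3$ is not compactly supported. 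Uniqueness is immediate from linearity: the difference of two solutions solves the homogeneous problem and vanishes by the energy identity.
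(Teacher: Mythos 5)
Your proposal is correct, and its overall architecture coincides with the paper's: solve the relaxed problem in which only $\nu\wedge D=0$ is imposed, then show that the divergence constraints, the normal condition on $B$, and the regularity \eqref{6.38} are automatically satisfied. The difference is in execution. The paper's proof is essentially a chain of citations: existence and uniqueness with $\diver B=0$ are attributed to its Theorems 6.1 and 6.3, while $\diver D=0$, $\nu\cdot B=0$ and \eqref{6.38} are delegated wholesale to Theorems 5.3, 6.3 and 6.4 of Chapter 7 of Duvaut--Lions. You instead prove these facts: the divergences by the ODE argument $\frac{\di}{\di t}\diver D+\sigma\hat\xi\,\diver D=0$, $\frac{\di}{\di t}\diver B=0$ (legitimate precisely because the coefficients are constants and all data in \eqref{6.37} are curls); the normal trace of $B$ by propagation in time, using $\nu\cdot\curl D=0$ for $D(t)\in V_1$ and $\nu\cdot G_2=0$ from the definition of $X_3$; the $H^1$ bounds by reading $\curl D$ and $\curl B$ off the equations \eqref{6.29f} and \eqref{6.28f} and applying the div-curl (Gaffney) estimate --- correctly retaining the $L^2$ term on the right-hand side so that the possibly nontrivial topology of the slotted domain is harmless; and the upgrade of $D$ to $H^1_0$ by passing the vanishing normal traces of the compactly supported approximations $D_n$ to the limit in $H(\diver;\Omega)$. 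These arguments are in substance what the cited Duvaut--Lions theorems contain, so your proof is a self-contained version of the paper's; what the paper's version buys is brevity, what yours buys is independence from the reference and an explicit mechanism for each constraint. The only point to keep in view is that your final $H^1_0$ step relies both on $\diver D_n=0$ and on the convergence $D_n\to D$ in $X_1$, i.e.\ on the stability theory of Theorem 6.2 applied to the divergence-free approximating data \eqref{6.44}, \eqref{6.45}; both requirements do hold there, so the argument closes.
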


Indeed, the existence of a unique solution to the problem   \eqref{6.28f},  \eqref{6.29f}, \eqref{6.31f},
such that $\nu \wedge D = 0$ on $S_T$ and $\diver B = 0$ in $Q$,  follows from
 Theorems 6.1 and 6.3.  The conditions $\nu\cdot B = 0$  on $S_T$, $\diver D = 0$ in $Q$,
 and \eqref{6.38} follow from  Theorems 5.3, 6.3 and 6.4 in \cite{Du.}, Chapter 7.

As before, we represent the function $B$ in the form $B=B^1 + B^2$, where $B^2$ is a given
 function such that

\begin{align}
&B^2 =\curl \alpha^2, \quad \alpha^2 \in L^\infty(0,T;H^2(\Omega)^3), \quad
            \curl \alpha^2 \cdot \nu = 0 \quad\text{on }  S_T,  \notag\\
&\frac{\di \alpha^2 }{\di t} \in L_{\infty} (0,T;H^1(\Omega)^3), \quad
    \frac{\di^2 \alpha^2 }{\di t^2} \in L^2 (0,T;H^1(\Omega)^3),  \notag\\
&\curl \frac{\di \alpha^2 }{\di t} \cdot \nu = 0 \quad\text{on }  S_T. \label{6.39}
\end{align}

Let $B_\tau$ be the tangential component of the vector $B$ on $S_T$. It is determined as
$B_\tau=B|_{S_T} - B \cdot \nu$. Since  $ B \cdot \nu = 0$.
we get $B|_{S_T} = B_\tau$ and the following boundary condition for $B^2$:
\begin{equation}\label{6.40}
B^2|_{S_T} = B|_{S_T} \quad\text{in } H^{\frac12}(S_T)^3.
\end{equation}
According to \eqref{6.39}, \eqref{6.40}, we set
\begin{align}
&B_0 = B_0^1 + B_0^2, \quad  B_0^2 = B^2|_{t=0} = \curl \alpha^2|_{t=0} \in H^1(\Omega)^3,
                          \notag\\
&B_0^1 = \curl v - \curl \alpha^2|_{t=0} \in H_0^1(\Omega)^3).    \label{6.41}
\end{align}

Now for the functions $D$, $B^1$, we obtain the following problem:
\begin{align}
&\frac{\di D}{\di t} - \curl(\hat{\mu}B^1) + \sigma \hat{\xi} D  =
                      G_1 + \curl(\hat{\mu}B^2)\quad\text{in } Q,      \notag\\
&\frac{\di B^1}{\di t} + \curl(\hat{\xi} D) =
                      G_2  - \frac{\di B^2}{\di t}  \quad\text{in } Q,      \notag\\
&\diver D = 0 \quad\text{in } Q, \quad \nu\wedge D = 0\quad\text{on } S_T,   \notag\\
&\diver B^1 = 0 \quad\text{in } Q, \quad \nu\wedge B^1  = 0\quad\text{on } S_T,   \notag \\
&D\Big|_{t=0} = D_0, \quad B^1\Big|_{t=0} = B_0^1 = B_0 - B_0^2 \quad\text{in } \Omega.  \label{6.42}
\end{align}
By analogy with the above, we get the next result.
\begin{theorem}
Let $\Omega$ be a bounded domain in $\mathbb{R}^3$ with a boundary $S$ of the class $C^\infty$.
Suppose that the conditions \eqref{6.37} and  \eqref{6.39}--\eqref{6.41} are satisfied. Let also
$\hat{\xi}$, $\hat{\mu}$, $\sigma$ be positive constants. Then, there exists a unique solution
$(D, B^1)$  to the problem \eqref{6.42} that meets the conditions
\begin{align}
&D \in L^\infty(0,T;H_0^1(\Omega)^3), \quad \frac{\di D}{\di t}
                     \in L^\infty(0,T;L^2(\Omega)^3), \notag\\
&B^1 \in L^\infty(0,T;H_0^1(\Omega)^3), \quad \frac{\di B^1}{\di t}
                     \in L^\infty(0,T;L^2(\Omega)^3).   \label{6.43}
\end{align}
\end{theorem}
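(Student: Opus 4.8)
The plan is to deduce the result from Theorem 6.5 by means of the splitting $B=B^1+B^2$, exactly as the formulations \eqref{6.1}--\eqref{6.4} and \eqref{6.10f}--\eqref{6.13f} were shown to be equivalent. I regard \eqref{6.42} as the slotted-antenna system \eqref{6.28f}--\eqref{6.32f} with the unknown magnetic induction $B$ replaced by $B^1$, the sources shifted to $\tilde G_1=G_1+\curl(\hat\mu B^2)$ and $\tilde G_2=G_2-\frac{\di B^2}{\di t}$, and the magnetic initial value replaced by $B_0^1=B_0-B_0^2$. Because $\hat\mu,\hat\xi,\sigma$ are constants, the curl representations in \eqref{6.37} and \eqref{6.39} are preserved under these shifts.

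First I would establish existence. Let $(D,B)$ be the unique solution of \eqref{6.28f}--\eqref{6.32f} furnished by Theorem 6.5, so that \eqref{6.38} holds, $\diver B=0$ in $Q$, and $\nu\cdot B=0$ on $S_T$. Setting $B^1:=B-B^2$ and subtracting the $B^2$-contributions, one checks directly that $(D,B^1)$ satisfies every equation and condition of \eqref{6.42}: the two shifted sources appear precisely as written, and $B^1|_{t=0}=B_0-B_0^2=B_0^1$. From \eqref{6.39}, $B^2=\curl\alpha^2$ with $\alpha^2\in L^\infty(0,T;H^2(\Omega)^3)$ and $\frac{\di\alpha^2}{\di t}\in L^\infty(0,T;H^1(\Omega)^3)$, whence $B^2\in L^\infty(0,T;H^1(\Omega)^3)$ and $\frac{\di B^2}{\di t}=\curl\frac{\di\alpha^2}{\di t}\in L^\infty(0,T;L^2(\Omega)^3)$; combined with \eqref{6.38} this gives $B^1\in L^\infty(0,T;H^1(\Omega)^3)$ and $\frac{\di B^1}{\di t}\in L^\infty(0,T;L^2(\Omega)^3)$, together with the analogous bounds for $D$.

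Next I would upgrade $B^1$ to $H_0^1$, which is the decisive step. Since $\diver B=0$ and $\diver B^2=\diver\curl\alpha^2=0$, we have $\diver B^1=0$ in $Q$ (this also follows from Theorem 6.3 applied to the shifted, divergence-free source $\tilde G_2$). For the trace, $\nu\cdot B^1=\nu\cdot B-\nu\cdot B^2=0$, because $\nu\cdot B=0$ by Theorem 6.5 and $\nu\cdot B^2=\curl\alpha^2\cdot\nu=0$ by \eqref{6.39}; together with the imposed tangential condition $\nu\wedge B^1=0$ of \eqref{6.42}, the full trace $B^1|_{S_T}$ vanishes. As $B^1\in L^\infty(0,T;H^1(\Omega)^3)$ and $S\in C^\infty$, vanishing of the whole trace places $B^1$ in $L^\infty(0,T;H_0^1(\Omega)^3)$, which is \eqref{6.43}. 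Uniqueness is then immediate from the equivalence: any solution $(D,B^1)$ of \eqref{6.42} produces a solution $(D,B^1+B^2)$ of \eqref{6.28f}--\eqref{6.32f}, unique by Theorem 6.5, so $(D,B^1)$ is unique as well.

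The hard part will be the regularity upgrade of $B^1$ to $H_0^1(\Omega)^3$. It is not enough to know that $B^1$ lies in $H(\curl)$ with $\nu\wedge B^1=0$; one must simultaneously control the \emph{normal} trace $\nu\cdot B^1=0$, which is inherited only through the nontrivial facts $\nu\cdot B=0$ (itself a consequence, via \cite{Du.}, of $\diver D=0$ in Theorem 6.5) and $\nu\cdot B^2=0$ from \eqref{6.39}, and then combine it with the divergence-free property and the $C^\infty$ smoothness of $S$ to conclude that the complete boundary trace of $B^1$ vanishes. Everything else---verifying that the shifted data are admissible and that the equations transform correctly under $B=B^1+B^2$---is routine bookkeeping once \eqref{6.39} and \eqref{6.41} are invoked.
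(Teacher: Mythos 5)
Your overall strategy --- reducing \eqref{6.42} to Theorem 6.5 through the splitting $B=B^1+B^2$ --- is exactly the route the paper takes (its proof is the equivalence discussion surrounding the theorem, given ``by analogy'' with Subsection 6.1). But your execution has a genuine gap: you never invoke hypothesis \eqref{6.40}, namely $B^2|_{S_T}=B|_{S_T}$ in $H^{\frac12}(S_T)^3$, and that hypothesis is precisely what makes the argument work. In the existence direction you set $B^1=B-B^2$ and must \emph{verify} every condition of \eqref{6.42} for this constructed function; in particular the tangential condition $\nu\wedge B^1=0$ on $S_T$ is something to be proved, not, as you put it, ``imposed''. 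Theorem 6.5 controls only the normal component of $B$ (that is, $\nu\cdot B=0$), and \eqref{6.39} only gives $\nu\cdot B^2=\curl\alpha^2\cdot\nu=0$; neither says anything about the tangential traces of $B$ or $B^2$, so without \eqref{6.40} the function $B^1=B-B^2$ will in general have a nonvanishing tangential trace. Consequently both your verification step (``one checks directly that $(D,B^1)$ satisfies every equation and condition of \eqref{6.42}'') and your $H^1_0$ upgrade collapse: a divergence-free $H^1$ field with zero normal trace but nonzero tangential trace does not lie in $H^1_0(\Omega)^3$.

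You also have the emphasis inverted. The step you single out as ``decisive'' --- deriving $\nu\cdot B^1=0$ from $\nu\cdot B=0$ and $\nu\cdot B^2=0$ --- is the easy, and in fact redundant, part. The correct argument is shorter: \eqref{6.40} gives the \emph{full} trace identity $B^1|_{S_T}=B|_{S_T}-B^2|_{S_T}=0$, which contains both the tangential condition of \eqref{6.42} and the normal one; combined with $B^1\in L^\infty(0,T;H^1(\Omega)^3)$ (which you do derive correctly from \eqref{6.38} and \eqref{6.39}) and the smoothness of $S$, this yields $B^1\in L^\infty(0,T;H^1_0(\Omega)^3)$ at once. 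Your uniqueness paragraph is sound as written, since there the conditions of \eqref{6.42} and the class \eqref{6.43} are genuinely given, and the reverse substitution $B=B^1+B^2$ produces a solution of \eqref{6.28f}--\eqref{6.32f} to which the uniqueness in Theorem 6.5 applies. So the repair is local: cite \eqref{6.40} wherever the boundary behaviour of $B^1$ is needed, and the rest of your reduction goes through.
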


Thus, if the pair $(D,B)$  is the solution to the problem \eqref{6.28f}--\eqref{6.31f}, and $B^2$
meets \eqref{6.39}, \eqref{6.40},
then the pair $(D,B^1)$  with $B^1= B-B^2$ is the solution to the problem
   \eqref{6.42}.

 On the contrary, if the couple  $(D,B^1)$  is the solution to the problem \eqref{6.42}, where $B^2$
   meets  \eqref{6.39}, then the couple $(D, B)$  with $B=B^1+B^2$  is the solution to the problem
   \eqref{6.28f}--\eqref{6.31f}, and  \eqref{6.40}   holds.

Therefore, the formulations \eqref{6.1}--\eqref{6.4} and  \eqref{6.10f}--\eqref{6.13f} are equivalent in the
above sense.

Let $\{G_{1n},\, G_{2n},\,D_{0n},\,B_{0n}^1\}$  be a sequence such that
\begin{align}
&G_{in} = \curl w_{in},\quad w_{in} \in C^{\infty}([0,T];\mathcal{D}(\Omega)^3),    \notag \\
&w_{in}(x,t) =  \sum_{k=0}^\infty\,\frac 1{k!}\,
     \frac{\di^k w_{in}}{\di t^k}\, (x,0) t^k,  \quad (x,t) \in Q,\quad i=1,2,  \notag \\
&G_{1n} \to G_1 + \curl(\hat{\mu}B^2 )  \quad\text{in } H^{0,1}(Q)^3,   \notag \\
&G_{2n} \to G_2 - \frac{\di B^2}{\di t} \quad\text{in } H^{0,1}(Q)^3,  \label{6.44}\\
&D_{0n} = \curl p_n,\,\, p_n\in \mathcal{D}(\Omega)^3. \,\,
                                     \curl p_n \to \curl p \quad\text{in } H_0^1(Q)^3,  \notag \\
&B_{0n}^1 = \curl e_n,\,\, e_n\in \mathcal{D}(\Omega)^3, \,\,
                 \curl e_n \to \curl v - \curl \alpha^2|_{t=0} \in H_0^1(\Omega)^3.\label{6.45}
\end{align}

We consider the problem: Find functions $D_n$ and $B_n^1$ such that
\begin{align}
&\frac{\di D_n}{\di t} - \curl(\hat{\mu}B_n^1) + \sigma \hat{\xi} D_n  =
                                                  G_{1n}\quad\text{in } Q,  \notag\\
&\frac{\di B_n^1}{\di t} + \curl(\hat{\xi} D_n) = G_{2n}\quad\text{in } Q,      \notag\\
&\diver D_n = 0 \quad\text{in } Q, \quad \nu\wedge D_n  = 0\quad\text{on } S_T,
                                                                                          \notag  \\
&\diver B_n^1 = 0 \quad\text{in } Q, \quad \nu\wedge B_n^1  = 0\quad\text{on } S_T,
                                                                                       \notag  \\
&D_n\Big|_{t=0} = D_{0n}, \quad B_n^1\Big|_{t=0} = B_{0n}^1 \quad\text{in } \Omega.
                       \label{6.46}
\end{align}

\begin{theorem}
Let $\Omega$ be a bounded domain in $\mathbb{R}^3$ with a boundary $S$ of the class $C^\infty$.
Suppose that the conditions \eqref{6.44}, \eqref{6.45} are satisfied, and let
 $ \hat{\xi}$, $\hat{\mu}$, $\sigma $ be positive constants.
 Then for any $n \in \mathbb{N}$,  there exists a unique solution $D_n$, $B_n^1$
 to the problem \eqref{6.46} that is represented in the form
 \eqref{6.10}--\eqref{6.12} and
\begin{align}
&D_n \to D  \quad\text{in }  L^\infty(0,T;H_0^1(\Omega)^3), \quad
                     \frac{\di D_n}{\di t}  \to   \frac{\di D}{\di t} \quad\text{in }
                     L^\infty(0,T;L^2(\Omega)^3),  \notag\\
&B_n^1 \to B^1  \quad\text{in }  L^\infty(0,T;H_0^1(\Omega)^3), \quad
                     \frac{\di B_n^1}{\di t}  \to   \frac{\di B^1}{\di t} \quad\text{in }
                     L^\infty(0,T;L^2 (\Omega)^3).
 \label{6.47}
\end{align}
\end{theorem}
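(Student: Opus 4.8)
The plan is to mirror the argument used for Theorem 6.2, with the extra structural constraints $\diver D_n = 0$ and $\diver B_n^1 = 0$ handled exactly as in Theorems 6.3 and 6.4. First I would establish existence and uniqueness of the solution $(D_n, B_n^1)$ to \eqref{6.46}. Because the approximating data \eqref{6.44}, \eqref{6.45} are built from curl potentials $w_{in}, p_n, e_n$ with the required regularity, the hypotheses \eqref{6.37}, \eqref{6.39}--\eqref{6.41} of Theorem 6.5 are met, so there is a unique pair $(D_n, B_n^1)$ with the regularity \eqref{6.43}. The divergence-free conditions come for free: since $G_{2n} = \curl w_{2n}$ gives $\diver G_{2n} = 0$ and $B_{0n}^1 = \curl e_n$ gives $\diver B_{0n}^1 = 0$, applying $\diver$ to the second equation of \eqref{6.46} yields $\di_t(\diver B_n^1) = 0$ with vanishing initial data, whence $\diver B_n^1 = 0$; the analogous computation for $D_n$ uses $G_{1n} = \curl w_{1n}$ and $D_{0n} = \curl p_n$, exactly as in the proof of Theorem 6.3.

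Next I would produce the Taylor representation \eqref{6.10}--\eqref{6.12}. Since $G_{1n}, G_{2n} \in C^{\infty}([0,T];\mathcal{D}(\Omega)^3)$ are given by their Taylor series and $D_{0n}, B_{0n}^1$ are compactly supported and smooth, the compatibility condition of order infinity holds, so the solution is infinitely differentiable in $\overline{Q}$, and the coefficients $\di_t^k D_n(\cdot,0)$, $\di_t^k B_n^1(\cdot,0)$ are obtained by differentiating the two evolution equations of \eqref{6.46} in $t$, $k-1$ times, and setting $t=0$; since $\hat{\mu}, \hat{\xi}, \sigma$ are constants this produces precisely the recurrence \eqref{6.12}. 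Convergence of the series in $X_1$ would then follow by the partial-sum device used in Theorems 2.3 and 6.2: the truncated sums solve the same system with a truncated right-hand side converging to $(G_{1n}, G_{2n})$, and the continuous-dependence estimate \eqref{6.14} forces the truncations to converge.

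To prove the convergence \eqref{6.47}, I would subtract the equations of \eqref{6.46} from those of \eqref{6.42} satisfied by the exact solution $(D, B^1)$. A simplification relative to Theorem 6.2 is that here $\hat{\mu}, \hat{\xi}, \sigma$ are fixed constants, so the coefficient-mismatch terms $\gamma_{1n}, \gamma_{2n}$ of \eqref{6.23} do not appear; the difference $(D - D_n, B^1 - B_n^1)$ solves the same linear system with right-hand sides $G_1 + \curl(\hat{\mu} B^2) - G_{1n}$ and $G_2 - \di_t B^2 - G_{2n}$ and with initial data $D_0 - D_{0n}$, $B_0^1 - B_{0n}^1$. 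Applying the a priori estimate of Theorems 4.1 and 5.1 of \cite{Du.}, Chapter 7, and invoking \eqref{6.44}, \eqref{6.45}, gives convergence of the difference to zero.

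The hard part will be obtaining the convergence in the $H_0^1$-norm of \eqref{6.47} rather than merely in the weaker $V$-norm of $X_1$. This stronger regularity is exactly what distinguishes the slotted-antenna setting: it relies on the fact that a field $v$ with $\curl v \in L^2(\Omega)^3$, $\diver v = 0$, and either $\nu \wedge v = 0$ or $\nu \cdot v = 0$ on $S$ belongs to $H^1(\Omega)^3$, together with the corresponding a priori bound. I would invoke the regularity results of \cite{Du.}, Chapter 7 (Theorems 5.3, 6.3, 6.4), already used to prove Theorem 6.5, to upgrade the $X_1$-convergence from the previous step to convergence in $L^{\infty}(0,T;H_0^1(\Omega)^3)$ for $D_n, B_n^1$ and in $L^{\infty}(0,T;L^2(\Omega)^3)$ for their time derivatives.
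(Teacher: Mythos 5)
Your proposal is correct and takes essentially the same approach as the paper: the paper's entire proof of this theorem is the single remark that it is ``analogous to the proof of Theorem 6.2,'' and your write-up is a faithful expansion of exactly that analogy --- existence and the Taylor representation \eqref{6.10}--\eqref{6.12} from the compatibility condition of order infinity, series convergence via truncated sums and the a priori estimate \eqref{6.14}, the subtraction argument for \eqref{6.47} (correctly noting that constant $\hat{\xi}$, $\hat{\mu}$, $\sigma$ eliminate the terms $\gamma_{1n}$, $\gamma_{2n}$ of \eqref{6.23}), and the div--curl regularity results of \cite{Du.}, Chapter 7, to upgrade from the $V$-norm of $X_1$ to $L^\infty(0,T;H_0^1(\Omega)^3)$. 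No gaps relative to the paper's intended argument.
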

The  proof of this theorem is analogous to the proof of Theorem 6.2.

 \begin{remark}
   The problem \eqref{6.28f}
   --\eqref{6.31f} is connected with finding functions $y$ such that
  $\diver y =0$ in $\Omega$, $y\cdot\nu=0$ on   $S$, see  \eqref{6.33}, \eqref{6.37}.
  These functions can be determined
   in the form
   \begin{equation}
    y=\curl v + \grad h, \,\, v \in H^1(\Omega)^3, \,\, h\in H^1(\Omega),
   \end{equation}
  where $h$ is the solution to the problem
  \begin{gather}
  \diver \grad h = \Delta h =0  \notag\\
  \grad h\cdot\nu = \frac{\di h}{\di \nu}\Big|_S=-\curl v\cdot \nu.
  \end{gather}
  \end{remark}

   We mention that     the suggested method based on the Taylor expansion with respect to $t$   can also be used to construct solutions to other equations and
system of equations, which contain derivatives with respect to time for all unknown
functions.



\begin{thebibliography}{99}

\bibitem{Ad.}  Adak D.,  Natarajan E.,  Kumar S.:
A new nonconforming finite-element method for convection  dominated
diffusion-reaction equations,
Int. J. Adv. Eng. Sci. Appl. Math.  $\bf{8}$, 274--283, 2016.

\bibitem{Be.} Besov, O.V., Il'in, V.P., Nikolsky, S.M.: Integral representation
of functions and embedding theorems. Nauka, Moskow, 1975 (in Russian)

\bibitem{Bu.} Burman E.,  Hansbo F.: Edge stabilization for Galerkin approximation of convection-diffusion-reaction problems, Comput. Methods Appl. Mech. Engrg.  $\bf{193}$,
1437--1453, 2004.

\bibitem{Ciar.} Ciarlet, P.: The finite element method for elliptic problems.
North-Holland, Amsterdam, 1978

\bibitem{Du.} Duvaut, G., Lions, J.-L.: Les in\'{e}quations en m\'{e}canique et en physique. Dunod, Paris, 1972

\bibitem{Do.}  Douglas,  J.Jr.,  Russell, T.F.: Numerical methods for convection-dominated diffusion  problems based on combining the method of characteristics with finite element or finite  difference procedures,
SIAM J. Numer. Anal $\bf{19}$,   871--1090, 1982.

\bibitem{Eid.} Eidelman, S.D.: Parabolic systems. Nauka, Moscow, 1964

\bibitem{Fr.} Friedman, A.: Partial differential equations of parabolic type.
Prentice Hall, New York, 1964

\bibitem{Gir.} Girault, V., Raviart, P.-A.: Finite element approximation of the Navier--Stokes
equations.  Lecture Notes in Mathemathics 749,   Springer, Berlin, 1981

\bibitem{Hei.} Heinrichk, J.C., Pepper, D.W.: Intermediate finite element method.
 Taylor and Francis, Philadelphia, 1999

\bibitem{LSU.} Ladyzhenskaya, O.A., Solonnikov, V.A, Uraltseva, N.N.:
Linear and quasilinear  equations of parabolic type. Amer. Math. Soc., Providence, RI, 1968


\bibitem{Lions} Lions, J.-L.:  Quelques m\'ethodes   de
r\'esolution des probl\`emes aux
limites non lin\'eairies. Dunod, Paris, 1969

\bibitem{Liopt} Lions, J.-L.: Contr\^ole optimal de syst\`{e}mes gouvern\'{e}s
par des
 \'{e}quations aux d\'{e}riv\'{e}es partielles. Dunod, Paris, 1968

\bibitem{Lio.1} Lions, J.-L., Magenes, E.: Probl\'{e}mes aux limites non homog\`{e}nes
et applications, Vol.~1. Dunod, Paris,  1968

\bibitem{LiM.1} Lions, J.-L., Magenes E.: Non-homogeneous boundary value problems and
applications, Vol. 2. Springer, Berlin, 1972


\bibitem{Lit.6} Litvinov, W.G.: Optimization in elliptic problems with applications
to mechanics of deformable bodies and fluid mechanics. Birkh\"auser,
Basel,   2000.

\bibitem{Mau.} Maugin, G.A.: Continuum mechanics of electromagnetic solids.
North-Holland, Amsterdam, 1988

\bibitem{Sch.} Schwartz, L.: Analyse math\'ematique, I. Hermann, Paris, 1967

\bibitem{Sol.2} Solonnikov, V.A.: On boundary value problems for linear
parabolic systems of differential equations of general form.
Trudy MIAN SSSR 83 (1965), 3--162  (in Russian). English translation: Proceedings
of the Steklov Institute of Mathematics, no. 83 (1965), Boundary Value Problems
of Mathematical Physics III, edited by O.A. Ladyzhenskaya.

\end{thebibliography}
\end{document}